\renewenvironment{proof}[1][Proof]{\noindent\textbf{#1.} }{\ \rule{0.5em}{0.5em}}
\theoremstyle{plain}%
\newtheorem{theorem}{Theorem}
\newtheorem{lemma}{Lemma}
\newtheorem{proposition}{Proposition}
\newtheorem{remark}{Remark}
\theoremstyle{definition}%
\newtheorem{definition}{Definition}
\newtheorem{assumption}{Assumption}
\renewenvironment{proof}[1][Proof]{\textbf{#1.} }{\ \rule{0.5em}{0.5em}}
\numberwithin{equation}{section}%
\newcommand{\sr}{\mathds{R}}%
\newcommand{\nr}{\mathds{N}}%
\newcommand{\Ep}{\textup{\textsf{E}}}%
\newcommand{\Pp}{\textup{\textsf{P}}}%
\newcommand{\Qp}{\textup{\textsf{Q}}}%
\newcommand{\Pip}{\Pi}%
\newcommand{\keyword}[1]{\noindent\emph{Keywords:} #1}%
\newcommand{\diff}[2]{\frac{\partial #1}{\partial #2}}%
\newcommand{\secdiff}[2]{\frac{\partial^2 #1}{\partial #2^2}}
\newcommand{\crossdiff}[3]{\frac{\partial^2 #1}{\partial #2 \partial #3}}
\newcommand{\pro}[1]{\left(#1_t\right)_{t\geq 0}}%
\newcommand{\expo}[1]{\exp\left( #1 \right)}
\newcommand{\rd}{\mathrm{d}}%
\newcommand{\cL}{{\mathscr L}}%
\newcommand{\cF}{{\mathscr F}}%
\newcommand{\cD}{{\mathscr D}}%
\newcommand{\cP}{{\mathscr P}}%
\newcommand{\cG}{{\mathscr G}}%
\newcommand{\cK}{{\mathcal K}}%
\newcommand{\CC}{{\mathcal C}}%
\newcommand{\LB}{{\underline{\mathcal R}}}%
\newcommand{\UB}{{\overline{\mathcal R}}}%
\newcommand{\bF}{{\mathbf F}}%
\newcommand{\bG}{{\mathbf G}}%
\newcommand{\CL}{{\mathcal L}}%
\newcommand{\CH}{{\mathcal H}}%
\newcommand{\vt}{\hat v}
\newcommand{\xL}{{\underline{x}}}%
\newcommand{\xU}{{\overline{x}}}%
\newcommand{\mL}{{\underline{m}}}%
\newcommand{\mU}{{\overline{m}}}%
\newcommand{\N}{\textsf{N}}%
\numberwithin{equation}{section}
\title{Singular Control in Inventory Management with Smooth Ambiguity}
\author{Arnon Archankul\thanks{Department of Mathematics, University of York, United Kingdom. \ttfamily{arnon.archankul@york.ac.uk}} \hspace{0.5mm} and Jacco J.J. Thijssen\thanks{Department of Mathematics, University of York, United Kingdom. \ttfamily{jacco.thijssen@york.ac.uk}}}
\date{\today}
\begin{document}
\maketitle

\begin{abstract} 
    We consider singular control in inventory management under Knightian uncertainty, where decision makers have a smooth ambiguity preference over Gaussian-generated priors. We demonstrate that continuous-time smooth ambiguity is the infinitesimal limit of Kalman-Bucy filtering with recursive robust utility. Additionally, we prove that the cost function can be determined by solving forward-backward stochastic differential equations with quadratic growth. With a sufficient condition and utilising variational inequalities in a viscosity sense, we derive the value function and optimal control policy. By the change-of-coordinate technique, we transform the problem into two-dimensional singular control, offering insights into model learning and aligning with classical singular control free boundary problems. We numerically implement our theory using a Markov chain approximation, where inventory is modeled as cash management following an arithmetic Brownian motion. Our numerical results indicate that the continuation region can be divided into three key areas: (i) the target region; (ii) the region where it is optimal to learn and do nothing; and (iii) the region where control becomes predominant and learning should inactive. We demonstrate that ambiguity drives the decision maker to act earlier, leading to a smaller continuation region. This effect becomes more pronounced at the target region as the decision maker gains confidence from a longer learning period. However, these dynamics do not extend to the third region, where learning is excluded.
    
    \keyword{Inventory Model, Singular Control, Smooth Ambiguity, Knightian Uncertainty, Kalman-Bucy Filtering}
\end{abstract}

\section{Introduction}\label{sec:intro}

In this paper, we study a singular control problem in inventory management under smooth ambiguity. Our framework applies to settings in which a decision-maker (DM) must determine the optimal inventory level but cannot characterise inventory flow variability using a single, known probability measure. In other words, the DM is \emph{ambiguous}, about the true probabilistic structure governing future inventory demand.
We build on the seminal work of \citet{Karatzas1983-do} on singular control of discounted Brownian storage systems over finite horizons, extending it to a setting in which the DM views ambiguity as an unobservable process and learns about it through observations of the inventory dynamics. The DM's attitude to ambiguity is incorporated through a certainty equivalence of the standard expected utility. Our analysis focuses on the interplay between the inventory cost structure and the DM's attitude to ambiguity, under the assumption that the DM is ambiguity averse.

Managing inventory becomes a concern when a DM needs to (i) offload excess inventory, which incurs high holding costs, or (ii) restock inventory when levels are too low, potentially leading to penalties for delayed shipments. The former action can be executed, e.g., by offering the excess inventory at a promotional price, donating it, or eliminating it, each of which incurs a (proportional) cost. On the other hand, when the inventory replenishment is in demand the DM might need to issue partial refunds to customers, or request orders restock from another factory, both of which incur costs relative to their volume. These in fact create a trade-off which prompt the DM to seek the optimal policy, taking form of the upper and lower triggers (also known as reflecting barriers) of continual inventory demand, to minimise the overall operational costs.

There is a rich literature on inventory control. The first paper that addresses the problem belongs to \citet{Arrow1951-mm}, where inventory is modelled as simple discrete-time random variables with known reflecting barriers. This is also referred to in the literature as \emph{obstacle problems}. Later, \citet{Eppen1969-js} developed the model to handle unknown reflecting barriers, allowing the decision-maker to choose policies that minimise the cost function. This is known as \emph{singular control}. The continuous-time analogue of the obstacle problem was subsequently developed by \citet{Bather1966-wi,Vial1972-ip,Constantinides1976-rf}, where inventory demand follows Brownian motion. \citet{Michael_HARRISON1978-dq} was the first to address singular control of inventory in a continuous-time framework, which has since been extended to more generalised approaches by authors such as \citet{harrison1983instantaneous,Karatzas1983-do,bar1995explicit,bensoussan2005optimality,Dai2013-yz,Dai2013-xb}, among others. For a detailed overview of related research, see \citet{Har13}.

While prior literature has successfully applied inventory models in operations research, it often assumes that decision-makers operate under a single, known probability measure to guide inventory flow decisions. In practice, however, this assumption frequently breaks down, particularly when the demand distribution is unknown. This is evident in the newsvendor problem, a classic inventory model, where \citet{Ma2022-ew} show that ambiguity significantly increases decision error, as reflected by higher mean absolute percentage error from the normative benchmark (based on prior information) and lower expected profits. Ambiguity also leads to systematic under-ordering, especially for high-margin products, indicating a cautious bias. Crucially, the study reveals an interaction between risk and ambiguity: ambiguity has a stronger negative impact when stochastic variability (i.e., risk) is low, while high risk appears to dampen ambiguity's effect, highlighting the interplay between these two forms of uncertainty.

Further empirical support comes from \citet{Kocabiyikoglu2024-ng}, who examine how ambiguous demand and learning affect inventory decisions. Their study shows that managers often default to normative models, resulting in suboptimal outcomes, capturing at most 84\% of potential profit in high-margin settings, and only 51\% (or losses) in low-margin ones. They also find that order deviations grow under ambiguity, increasing backorder risks and penalty costs. Consistent with \citet{Ma2022-ew}, they observe that while ambiguity generally worsens performance, ``demand chasing'', updating decisions based on observed demand, can improve accuracy and profitability. Together, these findings highlight the critical impact of demand ambiguity and learning, emphasising the need for rigorous decision support models in inventory management under uncertainty.

In this paper, we develop a framework for singular control of inventory under learnable multiple priors, where ambiguity attitudes are captured through a second-order expected utility represented by a certainty equivalence operator, that is, a subjective weighted average over a set of priors. This approach follows the smooth ambiguity model \emph{\`{a} la} \citet{KlMaMu05,Klibanoff2009-zj}. While our motivation stems from inventory management, the results extend naturally to more general singular control settings.

The concept of ambiguity was introduced by \citet{knight1921risk} as a crucial aspect of uncertainty measurement. Knight differentiates between uncertainties with known probability distributions, which he terms as \emph{risk}, and those with unknown distributions, referred to as \emph{ambiguity}\footnote{In some literature, ambiguity is also referred to as \emph{Knightian uncertainty} in recognition of Knight's contributions.}. The DM behaviour under ambiguity was first studied by Ellsberg (1961) in what is now known as Ellsberg’s experiment. This experiment involves making bets on two urns, each containing 100 red or blue balls. In the first urn, the proportion of red to blue balls is known, whereas in the second urn, this proportion is unknown. The results indicated that most people prefer to bet on the first urn, demonstrating an aversion to ambiguity. In other words, people favour a risky bet over an ambiguous one.

Several attempts have been made to incorporate ambiguity into decision support theory. Notably, \citet{GiSch89} introduced the concept of maxmin utility and \citet{KlMaMu05} developed the smooth ambiguity model. Both models capture ambiguity preferences within the framework of subjective expected utility, albeit from different perspectives. The maxmin utility model shows that a rational DM can display ambiguity aversion by considering the worst-case prior out of a set of priors. Meanwhile, smooth ambiguity allows posterior learning under certainty equivalence to display ambiguity aversion.
In terms of Ellsberg's urn experiment, a DM using maxmin utility assumes the urn's composition changes with each draw. Conversely, a DM with smooth ambiguity preference believes the urn's composition remains fixed and can be learned through a sequence of draws.

In the maxmin approach, ambiguity preferences are modelled through the ``size'' of the set of priors, offering significant mathematical tractability. This feature makes the framework particularly attractive for a wide range of applications in finance, economics, and operations research, especially in continuous-time settings, as developed by \citet{ChEp02}. See, for instance, \citet{NiOz07, Trojanowska2010-zn, Thijssen2011-ag, Cheng2013-to, Hellmann2018-xs} for examples of such applications.

The first attempt to study (cash) inventory management under ambiguity is made by \citet{archankul_thijssen_ferrari_hellmann_2025}, who apply the maxmin utility framework to a singular control problem of firm cash reserves. They show that more ambiguity-averse DMs, represented by a larger set of priors, tend to exert control earlier, leading to shorter cash hoarding durations. This aligns with the empirical findings discussed earlier.

However, the maxmin approach cannot capture the managerial learning behaviours. A related contribution by \citet{Federico2023-pm} employs sequential detection to update the unknown trend, interpreted as a proxy for ambiguity, in a two-sided singular control problem under arithmetic Brownian motion. However, their model does not take into account the DM's attitude toward ambiguity, indicating the need for a framework that incorporates subjective utility alongside learning.

These limitations motivate the adoption of an alternative framework that accommodates learning under ambiguity. The smooth ambiguity model serves as a natural and flexible solution, as it allows for belief updating while accounting for ambiguity aversion. Unlike the maxmin approach, where the worst-case prior is always selected from the extreme boundary of the set, known as the \emph{upper-rim generator} (cf. \cite{ChEp02}), the smooth ambiguity model enables a weighted average across priors, offering a richer and more flexible representation of subjective beliefs.

The concept of smooth ambiguity was initially introduced by \citet{KlMaMu05} within a static framework. To adapt it for a dynamic setting, the same authors, in their subsequent work \citet{Klibanoff2009-zj}, employ a dynamic programming principle, by means of a recursive utility appearing in \citet{Epstein1989-ow}, to evaluate the cost function in each discrete time period, with the likelihood updated one-step-ahead through the Bayesian framework. However, complications arise when considering smooth ambiguity in continuous time, as highlighted by \citet{Skiadas2013-eb}. In continuous time, the role of smooth ambiguity diminishes over time. In other words, in the original model proposed by \citet{Klibanoff2009-zj}, short-term decision-making neglects ambiguity aversion.

Later, \citet{gindrat2011smooth} demonstrated that it is possible to retain smooth ambiguity in continuous time by introducing a time increment dependence into a certainty equivalence operator between each one-step-ahead decision. In the same year, \citet{Hansen2011-ix} established a connection between the model proposed by \citeauthor{gindrat2011smooth} and proposed a backward stochastic differential equation (BSDE) representation. They accomplished this by assuming that the probability density generator is an unobservable \emph{Gaussian-generated} process and employed the Kalman-Bucy filter (cf. \citet{liptser2013statisticsII}) to give the best estimate under the realised information. They then applied a logarithmic transformation (cf. \citet{fleming2006controlled}) to convert \citeauthor{Klibanoff2009-zj}'s utility function into the BSDE.

In our contribution, we build upon the idea presented by \citet{Hansen2011-ix} and provide a rigorous argument by establishing the existence and uniqueness of such a BSDE. This BSDE is shown to be of a particular type, with a driver exhibiting quadratic growth in the diffusion term. By contemplating the theory associated with this type of BSDE, as discussed in \citet{Kobylanski2000-zw,Zhang2017-jp}, we demonstrate the possibility of integrating it with singular control under certain regularity conditions. Subsequently, we establish a connection between the BSDE and the Hamilton–Jacobi–Bellman (HJB) equation, albeit in a weak sense\footnote{Here, when we refer to the weak sense, we are specifically alluding to the viscosity solution of the HJB equation, as described in \citet{crandall1983viscosity}, since we only have prior knowledge of the continuity of the BSDE, but not its smoothness.}. We employ the classical derivation method of \citet{harrison1983instantaneous} to suggest an optimal control policy within the framework of singular control under smooth ambiguity.

It is important to note that using the Kalman–Bucy filter results in two-dimensional diffusions: one associated with the inventory-controlled process and the other with the posterior estimate. In terms of applicability, this is problematic, since only the inventory flow can be physically observed, not the posterior fluctuations. To address this, we employ the change-of-coordinates technique introduced by Johnson and Peskir (2017)\footnote{We refer to \citet{De_Angelis2020-sr, Federico2023-pm, Basei2024-pq} for the application of coordinate transformation in finance and economics.}. This method reduces the diffusion dimensions to one, solely with respect to the inventory-controlled process, while the other transforms into a deterministic process with bounded variations. We demonstrate that these processes are coupled in a mean-reversion-like manner, providing further insight into the filtering problem of the Gaussian-generated hidden variable, particularly under smooth ambiguity adjustment.

The structure of our paper is as follows. In Section \ref{sec:model}, we formulate the inventory model, transitioning from singular control to the Kalman–Bucy filter. Section \ref{sec:vf} introduces a value function that incorporates smooth ambiguity. Section \ref{sec:derivation} presents a heuristic derivation of the optimal control policy by solving a Hamilton–Jacobi–Bellman (HJB) equation, and Section \ref{sec:viscosity} establishes the viscosity property of this solution. Section \ref{sec:coor tran} discusses the change-of-coordinates technique. In Section \ref{sec:comstat}, we use Markov chain approximation to numerically solve the HJB equation and perform comparative statics, before concluding the paper in Section \ref{sec:conclusion}.

\section{Model formulation}\label{sec:model}
    Fix $T<\infty$. Let $ (\Omega, \cF, \bF=\pro{\cF}, \Pp)$ be a  complete filtered probability space, with $\cF\supseteq\cF_{T}$. The associated conditional expectation operator is denoted by $\Ep^{\Pp}_{\cG_t}[\cdot]$.   
    Suppose that $\alpha:\sr\to\sr$ and $\sigma:\sr\to\sr^+$ are $\cF_t-$measurable functions such that
\begin{align}
    |\alpha(x)| +|\sigma(x)|&\leq C_0(1+|x|)\;\; \text{for all }x\in \sr \label{eq: suff con a}\\
    |\alpha(x)-\alpha(y)| + |\sigma(x) -\sigma(y)| &\leq C_0|x-y|\;\;\;\;\text{for all } x,y\in \sr\label{eq: suff con s}
\end{align}
for some $C_0\in \sr$. We assume that the \emph{inventory process} follows a time-homogeneous diffusion, $X\triangleq\pro{X}$, which is the unique strong solution to the stochastic differential equation (SDE),
\begin{equation}\label{eq: X}
  \rd X_t = \alpha(X_t) \rd t+\sigma(X_t)\rd B_t,\quad X_0 = x,\quad  \Pp-\text{a.s.},
\end{equation}
 where $B=\pro{B}$ is a standard $\Pp$-Brownian motion, adapted to $\bF$.

A \emph{control policy} is a pair of bounded variation processes $A\triangleq (A^-,A^+)$, where $A^-$ and $A^+$ are $\bF-$adapted, bounded, non-decreasing, and non-negative. These processes are associated with increases and decreases, respectively, of $X$ at times at which control is exerted. A control policy $A$ is said to be \emph{feasible} if for all $x\in \sr$, there exists a unique $X^{A}$ strongly solves 
\begin{align}\label{eq: XA}
    \rd X^{A}_t = \alpha(X^A_t) \rd t+\sigma(X^A_t)\rd B_t +\rd A \quad X_0 = x,\quad  \Pp-\text{a.s.},
\end{align}
and if there exist $\overline C>0$ and $\underline C <0$ such that
\begin{align}
  \Pp\left(\sup_{t\geq 0}X^A_t<\overline C, \inf_{t\geq 0}X^A_t>\underline C\right) &= 1
\end{align}
The set of feasible control policies is denoted by $\cD$. We call $X^A$ a \emph{controlled inventory process}. Note that there is $\epsilon>0$ such that
\begin{align}\label{cond:never intervene}
    \Pp\left(\sup_{t\geq 0}X^0_t<-\epsilon, \inf_{t\geq 0}X^0_t>\epsilon\right)>0,
\end{align}
where $X^0$ is an uncontrolled process, i.e., $A=0$. This ensures that to \textit{never intervene} is a feasible control policies, i.e., $0\in \cD.$

Ambiguity arises when a decision maker (DM) is unable to establish a single probability model. This inability might stem from a lack of information about the historical inventory flow evolution, or even if such information exists, the DM cannot agree on a single probability measure. Therefore, the DM contemplates a \emph{set of prior} denoted as $\cP$, which comprises a collection of probability measures that are equivalent to the \emph{reference prior} $\Pp$. The baseline prior represents the DM's best subjective estimate of the model. 


Applying the Radon-Nikodym theorem, we establish that, for a given measure $\Qp$ belonging to the set $\cP$, there exists an $\cF_T$-measurable function $\frac{\rd \Qp}{\rd \Pp}\Big|\cF_T$ known as the Radon-Nikodym derivative
In this paper, we assume that $\cP$ is generated by a set of density generators denoted by $\Theta$ via the Girsanov theorem (cf. \cite{karatzas1991brownian}). That is to say, there is an $\bF$-adapted process $\theta = \pro{\theta}\in\Theta$ such that
for each $\Qp\in\cP$ so that the Radon-Nikodym derivative $\frac{\rd \Qp}{\rd \Pp}\Big|\cF_T$ is given by the following $\Pp$-martingale process:
\begin{align}
\frac{\rd \Qp}{\rd \Pp}\Big|\cF_T = \expo{ -\int_0^T \theta_t \rd B_t - \frac{1}{2}\int_0^T\theta_t^2 \rd t }.
\end{align}
We refer to $\theta$ as the \emph{model}. Moreover, for all $\theta\in\Theta$, we assume that $\theta_t\in \cK\subseteq \sr$ a.s.\footnote{A spceial case of $\cK$ is the closed interval $[-\kappa,+\kappa]$, referred to as $\kappa$--ignorance, introduced by \citet{ChEp02}.} for any $t\geq 0$, and that $0\in \cK$ to ensure that $\Pp\in\cP$. Since $\frac{\rd \Qp}{\rd \Pp}\Big|\cF_T$ is a $\Pp$-martingale, from now on we denote for notional convenience by
\begin{align*}
\frac{\rd \Qp}{\rd \Pp}\Big|^{(t,s)}_{\cF_t}\triangleq\frac{\rd \Qp}{\rd \Pp}\Big|\cF_s\bigg/\frac{\rd \Qp}{\rd \Pp}\Big|\cF_t, \quad \textup{for any } s\in[t,T].
\end{align*}
Consequently, as shown in \cite{karatzas1991brownian}, the process $B^{\theta}=\pro{B^{\theta}}$ defined by
\begin{align}
B_t^{\theta} \triangleq B_t + \int_0^t\theta_s \rd s
\end{align}
is a $\Qp$-Brownian motion. Thus, under the measure $\Pp$, the controlled inventory process $X^{\theta,A}$ follows
\begin{align}\label{eq:inventory p theta}
\rd X^{\theta,A}_t &=\alpha(X^{\theta,A}_t)\rd t+\sigma(X^{\theta,A}_t)\rd B^{\theta}_t + \rd A\nonumber\\
&=\left(\alpha(X^{\theta,A}_t) -\sigma(X^{\theta,A}_t)\theta_t\right)\rd t+\sigma(X^{\theta,A}_t)\rd B_t + \rd A, \quad X^{\theta,A}_0 = x, 
\end{align}
admitted a unique strong solution. Moreover, the fact that $\Qp$ and $\Pp$ share the same null set ensures that $(L,U)\in\cD$ under $\Qp$. In other words, $(L,U)$ is also adapted in accordance with ambiguity. For brevity, we write $X^{0,L,U}=X^A $.

Given the inherent nature of ambiguity, it is reasonable to acknowledge that, due to insufficient information or the questionable reliability of the available information, the DM is assumed to treat the model $\theta\in\Theta$ as an unobservable process. Nevertheless, the model can be inferred from a stream of information $\bG$, generated by an observable process, which, in this context, is the controlled inventory process $X^{\theta,A}$. Here, $\bG=\pro{\cG}$ is such that $\cG_t\subseteq\cF_t$ for all $t\geq 0$. In other words, $\theta_t$ is $\cF_t$-measurable but not $\cG_t$-measurable. According to filtering theory (see \cite{liptser2013statisticsI}, Chapter 12), $\theta_t$ can be estimated by its conditional expectation with respect to $\bG$. Under the reference $\Pp$, we represent this estimator as a $\cG_t$-measurable process $$M_t^{\Pp}\triangleq\Ep^{\Pp}_{\cG_t}[\theta_t]_{t\geq0},$$ with the mean square error expressed as $$S_t\triangleq\Ep^{\Pp}_{\cG_t}[(\theta_t-M^{\Pp}_t)^2]_{t\geq0}.$$
As a result, the Radon-Nikodym derivative under a Girsanov kernel $M^{\Pp}$ is
\begin{align}
\frac{\rd \Qp}{\rd \Pp}\Big|\cG_T = \expo{ -\int_0^T M^{\Pp}_t \rd B_t - \frac{1}{2}\int_0^T(M^{\Pp}_t)^2 \rd t }.
\end{align}

Throughout this paper we consider the special case where $\theta$ is a \emph{constant}, i.e. $\rd \theta_t =0$, and has predetermined (believed) to be \emph{normal distributed} with mean $M^{\Pp}_0\in\sr$ and variance $S_0>0$, i.e. $\theta_0\sim\N(M^{\Pp}_0,S_0)$. This also means that $\theta_t\in\sr$ a.s. for any $t\geq0$, i.e. $\cK=\sr$. By these assumptions, it is thus implied by Kalman-Bucy filtering (cf. \citet{liptser2013statisticsII}) that there exists a strong unique solution to the process $R^{M^{\Pp},L,U} = (X^{M^{\Pp},A}_t,M^{\Pp}_t,S_t)_{t\geq 0}$ where
\begin{align}
    \rd X^{M^{\Pp},A}_t &= \phantom{-}(\alpha(X^{M^{\Pp},A}_t) -\sigma(X^{M^{\Pp},A}_t)M^{\Pp}_t )\rd t+\sigma(X^{M^{\Pp},A}_t)\rd \overline B_t + \rd A_t\\
    \rd M^{\Pp}_t &= \phantom{-}S_t\rd \overline B_t \\
    \rd S_t &= -(S_t)^2 \rd t\label{eq:S}
\end{align}
and  $\overline B\triangleq\pro{\overline B}$ is so-called an \emph{innovation process} solving
\begin{equation}\label{eq: innovation process}
    \overline B_t =\frac{1}{\sigma(X^{\theta,A}_t)}\int_0^t \left( X^{\theta,A}_s - (\alpha(X^{\theta,A}_s)-\sigma(X^{\theta,A}_s)M^{\Pp}_s) \right)\rd s
\end{equation}
which also is a $\Pp$--Brownian motion. We call $X^{M^{\Pp},A}, M^{\Pp}$ and $S$ a \textit{filtered controlled inventory process, belief process} and \textit{belief variance}, respectively. 

Note that the variance of the belief process is given by $S(t) = \frac{s}{1+st},t\geq 0$. This expression reveals that the variance of the belief increment decreases monotonically over time and tends to zero as $t\rightarrow\infty$. In other words, the longer the learning process continues, the more precise the belief becomes, implying that the hidden information is eventually revealed. Consequently, the terminal belief $M^{\Pp}_{\infty}$ becomes an \emph{absorbing} state of the belief process. 

Furthermore, the tail probability of the belief process can be computed as
\begin{align*}
    \Pp\left(|M_t-m|>h\mid M_0=m\right) = 2\left(1-\Phi\left(h\sqrt{\frac{1+st}{s^2t}}\right)\right)
\end{align*}
where $\Phi$ is the standard normal cumulative distribution function. This expression indicates that the likelihood of the belief deviating significantly from its reference point $M^{\Pp}_0=m$ decreases as $h$ increases.  Thus, it becomes increasingly unlikely for such tail events to be the absorbing states of the belief process.

Based on this observation, we assume that the DM has a predetermied \emph{confidence interval}, denoted by $[\mL, \mU]=[m-h,m+h]\subset \sr$, which is assumed to contain the absorbing state with high probability. Accordingly, when the belief process reaches the boundaries $\mL$ or $\mU$, the DM endogenously intervenes to confine the belief within the interval $[\mL, \mU]$. 

Formally, this intervention is modelled via a bounded variation process $D\triangleq D^-+D^+$, where $D^-$ and $D^+$ are $\bG-$adapted, bounded, non-decreasing, non-negative. They increase only when the belief process $M^{\Pp}_t$ hits the lower boundary $\mL$ or upper boundary 
$\mU$, respectively, for any $t\in[0,T]$. In other words, $D$ ensures that 
\begin{align}
  \Pp\left(\sup_{t\geq 0}M^{\Pp}_t\leq \mU, \;\inf_{t\geq 0}M^{\Pp}_t\geq \mL \right) &= 1.
\end{align}

 As a result, our setting becomes a mixed singular-obstacle control problem and so the inventory flow satisfies
\begin{align}
    \rd X^{M^{\Pp},A}_t &= \phantom{-}(\alpha(X^{M^{\Pp},A}_t) -\sigma(X^{M^{\Pp},A}_t)M^{\Pp}_t )\rd t+\sigma(X^{M^{\Pp},A}_t)\rd \overline B_t + \rd A_t,\\
    \rd M^{\Pp}_t &= \phantom{-}S_t\rd \overline B_t+ \rd D_t\\
    \rd S_t &= -(S_t)^2 \rd t.
\end{align}

\section{Optimisation problem under ambiguity}\label{sec:vf}
Now we introduce a discounted cumulative cost of inventory holding from time $t$ to $T$:
\begin{align}\label{def: discounted cumulative cost criteria}
    K(t,X^{M^{\Pp},A}_t)\triangleq\int_t^T e^{-\rho(s-t)}\left(f(X^{M^{\Pp},A}_s)\rd s +\ell \rd A^-_s + u \rd A^+_s \right)+ e^{-\rho(T-t)}\xi(X^{M^{\Pp},A}_T).
\end{align} 
Here $f:\sr\to\sr^+$, and $\xi:\sr\to\sr^+$ are Borel measurable mappings, which determine inventory holding cost and bounded terminal cost, respectively. Constants $\ell>0$ and $u>0$ are proportional cost when control on $X^{M^{\Pp},A}$ is exerted, and $\rho>0$ is a DM's discounted rate. For some $h>0$ such that $t+h\leq T$ for all $t\in[0,T]$, we define a linear operator $\Pi_{t+h}$ associated cumulative cost criterion \eqref{def: discounted cumulative cost criteria} at time $t$ by
\begin{align}\label{def: linear operator}
    \Pip_{t+h}K(t,X^{M^{\Pp},A}_t)=\int_t^{t+h} e^{-\rho(s-t)}\left(f(X^{M^{\Pp},A}_s)\rd s +\ell \rd A^-_s + u \rd A^+_s \right)+ K(X_{t+h}^{A}).
\end{align}
Therefore, for any feasible control policy $A\in\cD$, the standard expected cost of inventory holding of is 
\begin{align}\label{def:standard expected utility}
    \Ep^{\Pp}_{\cG_t}\left[ \Pip_{t+h}K(t,X^{M^{\Pp},A}_t)\right].
\end{align}



In this paper, we begin by assuming that the decision maker (DM) exhibits ambiguity-sensitive preferences, modelled within the smooth ambiguity framework introduced by \citet{KlMaMu05,Klibanoff2009-zj}. Specifically, the DM evaluates the cumulative cost criterion using a certainty equivalence operator, which distorts the standard expectation \eqref{def:standard expected utility} based on the information available at each point in time. This is captured by a \emph{convex function} $\psi_h$, through which the DM recursively evaluates the cost associated with any admissible policy $A \in \cD$ via the following recursive functional:
\begin{align}\label{def: distorted cost function}
    K^h(t,X^{M^{\Pp},A}_t) \triangleq\psi_h^{-1} \Ep^{\Pp}_{\cG_t}\Bigg[\psi_h\left(\Pip_{t+h}K(t,X^{M^{\Pp},A}_t)\right) \Bigg]
\end{align}
for $0\leq t<t+h\leq T<\infty$. The convexity of the function $\psi_h$ naturally distorts the standard expectation: when $\psi_h$ is decreasing, it biases the evaluation toward lower values; when increasing, it biases toward higher values. To facilitate a more tractable analysis, we adopt a time-increment-dependent exponential disutility specification for $\varphi_h$, as proposed by \citet{Hansen2011-ix} (see also, \citet{Hansen2018-yc,Hansen2022-ob} for a similar treatment). Specifically, we define 
\begin{align}\label{def: concave utility} 
\psi_h(x) \triangleq \exp\left( \frac{\gamma x}{h(1 - \rho h)} \right), \quad \textup{for } x \in \mathbb{R},  h > 0, 
\end{align} 
where the parameter $\gamma$ governs the DM's attitude toward ambiguity. For a fixed $h > 0$, the function $\varphi_h$ is decreasing when $\gamma < 0$, reflecting \textit{ambiguity aversion}; increasing when $\gamma > 0$, reflecting \textit{ambiguity seeking}; and approaches linearity as $\gamma \to 0$, corresponding to \textit{ambiguity neutrality}. The presence of $h$ is crucial to maintain the impact of ambiguity in a decision making within an interval $[t,t+h]$. Without it, ambiguity diminishes with time, which means that \eqref{def: distorted cost function} almost surely becomes a linear conditional expectation~\eqref{def:standard expected utility}, as demonstrated by \citet{Skiadas2013-eb}. To compactify the argument, we assume, for the rest of this paper, that the DM is \underline{\textit{ambiguity averse}}. The results for the ambiguity-seeking case follow analogously.

We show in the following section that in limit $h\downarrow0$ one can express the cost function \eqref{def: distorted cost function} by the robust control problem:
\begin{align}\label{def: distorted cost function 2}
\sup_{\Pp\in\cP}\Ep^{\Qp}_{\cG_t}\left[ K(t,X^{M^{\Pp},A}_t) + \Psi_{t,T}(\Pp\parallel\Qp)\right],
\end{align}
where $\Psi_{t,T}(\Pp\parallel\Qp)$ represents the \textit{statistical distance} associated with changing the probability measure from $\Pp$ to $\Qp$ at time $T$ given the information known at time $t$, known in the literature as \textit{relative entropy} or \textit{Kullback–Leibler divergence}. 

Hence, within the framework of a singular control with smooth ambiguity, the DM's objective is to determine an optimal policy $A$ that minimises the cost function \eqref{def: distorted cost function 2}. Therefore, the \textit{value function}, denoted as the process $V\triangleq\pro V$, is defined as:
\begin{align}\label{def: value function}
V_{t}\triangleq \inf_{A\in\cD}\sup_{\Qp\in\cP}\Ep^{\Qp}_{\cF_t}\left[ K(t,X_t^{M^{\Pp}_t,A}) + \Psi_{t,T}(\Pp\parallel\Qp)\right].
\end{align}
To analyse this value function, we begin by deriving an explicit form of the relative entropy. Then we demonstrate that solving the problem is equivalent to  solving a system of forward-backward stochastic differential equations (FBSDEs), given that $\theta_0$ has a Gaussian distribution. We then establish  sufficient conditions for the optimal conditions of the singular control. Finally, we establish a verification theorem to provide a unique viscosity solution of a Hamilton-Jacobi-Bellman (HJB) equation which gives a deterministic representation of the system of FBSDEs.

\begin{lemma}[Logarithmic Transformation]\label{lem: log-tran}
    The conditional expectation~\eqref{def: distorted cost function} satisfies
    \begin{align}\label{def: robust control}
        K^h(t,X_t^{M^{\Pp},A})\geq\sup_{\Qp\in\cP}\Ep^{\Qp}_{\cG_t}\Bigg[\Pip_{t+h}K(t,X^{M^{\Pp},A}_t) + \frac{h(1-\rho h)}{\gamma}\log\left(\frac{\rd \Pp}{\rd \Qp}\Big|^{(t,t+h)}_{\cG_t} \right)\Bigg| \cG_t\Bigg],
    \end{align}
    where $h>0$ is chosen to be small enough that $t+h<t+2h<\ldots\leq t+nh=T$, for some $n\in \nr$. Moreover, if we take $h\to 0$, then 
    \begin{align}\label{def: robust control 2}        
         J^{A}_t\triangleq \lim_{h\downarrow0}K^h(t,X_t^{M^{\Pp},A})&\geq\sup_{\Qp\in\cP}\Ep^{\Qp}_{\cG_t}\Bigg[K(t,X^{M^{\Pp},A}_t)+\int_t^{T} e^{-\rho(s-t)} \frac{1}{\gamma}\log\left(\frac{\rd \Pp}{\rd \Qp}\Big|^{(t,s)}_{\cG_t} \right)\rd s \Bigg].
    \end{align}
\end{lemma}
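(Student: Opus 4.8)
The engine of the argument is the convex duality between the exponential and relative entropy — the Gibbs, or Donsker--Varadhan, variational formula — which for a $\cG_{t+h}$-measurable random variable $Z$ with finite exponential moment reads
\begin{align}\label{eq:gibbs-dv}
    \log\Ep^{\Pp}_{\cG_t}\!\left[e^{Z}\right]=\sup_{\Qp\ll\Pp}\left\{\Ep^{\Qp}_{\cG_t}[Z]-\Ep^{\Qp}_{\cG_t}\!\left[\log\frac{\rd\Qp}{\rd\Pp}\Big|^{(t,t+h)}_{\cG_t}\right]\right\},
\end{align}
with the supremum attained at the tilted measure $\rd\Qp^{\star}\propto e^{Z}\,\rd\Pp$. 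The plan for \eqref{def: robust control} is to feed into \eqref{eq:gibbs-dv} the explicit exponential \eqref{def: concave utility}: taking $Z=\tfrac{\gamma}{h(1-\rho h)}\,\Pip_{t+h}K(t,X^{M^{\Pp},A}_t)$ — which depends only on the state and the costs accrued up to $t+h$, hence is $\cG_{t+h}$-measurable — and dividing through by the constant $\gamma/(h(1-\rho h))$ rewrites the definition \eqref{def: distorted cost function} of $K^h$ as the value of an \emph{unconstrained} entropy-penalised problem over $\{\Qp\ll\Pp\}$, with the divergence weighted by $h(1-\rho h)/\gamma$. For $h$ small enough the constant $\gamma/(h(1-\rho h))$ has the sign that makes \eqref{eq:gibbs-dv} a representation of $K^h$ as a \emph{supremum} over $\{\Qp\ll\Pp\}$; since the set of priors $\cP$ is, by the Girsanov construction of Section~\ref{sec:model}, a family of measures equivalent to $\Pp$ which contains $\Pp$ itself, restricting this supremum from $\{\Qp\ll\Pp\}$ down to $\cP$ produces a one-sided bound, which is exactly \eqref{def: robust control}. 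Two small points close this part: each $\Qp\in\cP$ carries a $\cG_t$-conditional density (the filtered Radon--Nikodym derivative with Girsanov kernel $M^{\Pp}$ of Section~\ref{sec:model}), which is needed for the conditional form; and equality may fail only because the tilted optimiser $\rd\Qp^{\star}\propto\psi_h(\Pip_{t+h}K)\,\rd\Pp$ need not belong to $\cP$.

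For the limit $h\downarrow0$ I would fix the uniform grid $t=t_{0}<t_{1}<\dots<t_{n}=T$ with $t_{k+1}-t_{k}=h$, unfold the recursion defining $K^h$ node by node, and apply the one-step bound \eqref{def: robust control} at each $t_{k}$. Two structural facts glue the steps together: the tower property of the conditional expectations $\Ep^{\Pp}_{\cG_{t_k}}$ (dynamic consistency), which chains the nested conditional operators, and the chain rule for relative entropy, which sums the single-period divergences $\Ep^{\Qp}_{\cG_{t_k}}[\log(\rd\Pp/\rd\Qp)|^{(t_k,t_{k+1})}_{\cG_{t_k}}]$ along the grid into the divergence over $[t,T]$. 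The normalisation $1/(h(1-\rho h))$ built into \eqref{def: concave utility} is precisely what keeps this composition non-degenerate in the limit — this is the device of \citet{Hansen2011-ix} neutralising the continuous-time vanishing of smooth ambiguity flagged by \citet{Skiadas2013-eb}: as $h\downarrow0$ the nested maps $\psi_h^{-1}\circ\Ep^{\Pp}\circ\psi_h$ telescope, the $(1-\rho h)$ factors together with the per-period discount $e^{-\rho h}$ reconstitute the weight $e^{-\rho(s-t)}$, and the accumulated discrete divergences converge as a Riemann sum to $\int_t^{T}e^{-\rho(s-t)}\tfrac{1}{\gamma}\log\big(\tfrac{\rd\Pp}{\rd\Qp}\big|^{(t,s)}_{\cG_t}\big)\,\rd s$. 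Passing the limit through $\sup_{\Qp\in\cP}$ (a Fatou-type step) then gives \eqref{def: robust control 2} with $J^{A}_t=\lim_{h\downarrow0}K^h(t,X^{M^{\Pp},A}_t)$.

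The real work, and the step I expect to be the main obstacle, is the passage $h\downarrow0$ itself. One has to produce moment and distortion estimates on $K^h$ that are uniform both in the mesh $h$ and in $\Qp\in\cP$, so that the iterated nonlinear operators actually converge and the limit commutes with the supremum. These should follow from the linear-growth and Lipschitz hypotheses \eqref{eq: suff con a}--\eqref{eq: suff con s} (which give $L^{p}$-control of $X^{M^{\Pp},A}$, hence of $\Pip_{t+h}K$ via the growth of $f$ and the boundedness of $\xi$), the admissibility bounds built into $\cD$, and the finiteness of the exponential moments $\Ep^{\Pp}[\psi_h(\Pip_{t+h}K)]$ — the delicate ingredient, since this constrains the interplay between the growth of $f$, the prior variance $S_{0}$ and $|\gamma|$, and is the same quantitative requirement that later makes the quadratic-growth BSDE developed in the sequel well posed. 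Once these uniform estimates are in hand, the Riemann-sum convergence of the divergence terms and the Fatou argument through the supremum are routine, while the fixed-$h$ inequality is an immediate corollary of \eqref{eq:gibbs-dv}.
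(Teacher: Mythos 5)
Your proposal is correct and follows essentially the same route as the paper: your Donsker--Varadhan/Gibbs step is exactly the paper's change of measure plus Jensen's inequality (the paper proves the "$\geq$" direction by hand for each $\Qp\in\cP$ rather than citing the variational formula), and your grid-unfolding with the entropy chain rule and Riemann-sum reconstruction of the discount weight $e^{-\rho(s-t)}$ mirrors the paper's chaining of one-period divergences (done there via a mean-value-theorem rewriting and telescoping) before sending $h\downarrow 0$. The uniform-integrability concerns you flag for the limit are not treated in the paper's own (rather brief) passage to the limit, so your account is, if anything, more careful on that point.
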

\noindent\begin{proof}[Proof of Lemma \ref{lem: log-tran}]
    Let $\Qp\in\cP$. We then have that
    \begin{align*}
         &\Ep^{\Pp}_{\cG_t}\Bigg[\expo{\frac{\gamma}{h(1-\rho h)}\Pip_{t+h}K(t,X^{M^{\Pp},A}_t)}\Bigg]\\
         &=\Ep^{\Qp}_{\cG_t}\left[\frac{\rd \Pp}{\rd \Qp}\Big|^{(t,t+h)}_{\cG_t}\expo{\frac{\gamma}{h(1-\rho h)}\Pip_{t+h}K(t,X^{M^{\Pp},A}_t)} \right] \\
        &=\Ep^{\Qp}_{\cG_t}\Bigg[\exp\Bigg(\frac{\gamma}{h(1-\rho h)}\Bigg(\Pip_{t+h}K(t,X^{M^{\Pp},A}_t) + \frac{h(1-\rho h)}{\gamma}\log\left(\frac{\rd \Pp}{\rd \Qp}\Big|^{(t,t+h)}_{\cG_t} \right)\Bigg)\Bigg)\Bigg]\\
        &\geq \exp\Bigg(\Ep^{\Qp}_{\cG_t}\Bigg[\frac{\gamma}{h(1-\rho h)}\Bigg(\Pip_{t+h}K(t,X^{M^{\Pp},A}_t)+ \frac{h(1-\rho h)}{\gamma}\log\left(\frac{\rd \Pp}{\rd \Qp}\Big|^{(t,t+h)}_{\cG_t} \right) \Bigg) \Bigg]\Bigg),
    \end{align*}
    from which we deduce that 
    \begin{align*}
        K^h(t,X_t^{M^{\Pp},A}) &\geq\sup_{\Qp\in\cP}\Ep^{\Qp}_{\cG_t}\Bigg[\Pip_{t+h}K(t,X^{M^{\Pp},A}_t)+ \frac{h(1-\rho h)}{\gamma}\log\left(\frac{\rd \Pp}{\rd \Qp}\Big|^{(t,t+h)}_{\cG_t} \right)  \Bigg],
    \end{align*}
    since $\Pp$ is arbitrary.
    By the mean value theorem (MVT), there exists $\delta>0$ such that
    \begin{align}\label{eq: linear KL}
         h(1-\rho h)\log\left(\frac{\rd \Pp}{\rd \Qp}\Big|^{(t,t+h)}_{\cG_t}\right)  &=\int_t^{t+h+\delta}(1-\rho(s-t))\log\left(\frac{\rd \Pp}{\rd \Qp}\Big|^{(t,s)}_{\cG_t}\right)  \rd s \nonumber\\
         &\geq \int_t^{t+h}(1-\rho(s-t))\log\left(\frac{\rd \Pp}{\rd \Qp}\Big|^{(t,s)}_{\cG_t}\right)  \rd s
    \end{align}
     By definition of the linear operator $\Pip_{t+h}$ and \eqref{eq: linear KL}, it follows that
     \begin{align}\label{eq: linear KL2}
             \sum_{k=1}^n\log\left(\frac{\rd \Pp}{\rd \Qp} (t + (k-1)h,t+ kh)\right) 
                &\geq \frac{1}{ h(1-\rho h)} \sum_{k=1}^n \int_{t + (k-1)h}^{t+kh}(1-\rho(s-t))\log\left(\frac{\rd \Pp}{\rd \Qp}\Big|^{(t,s)}_{\cG_t}\right)  \rd s\nonumber \\
             &=\frac{1}{ h(1-\rho h)} \int_t^T (1-\rho(s-t))\log\left(\frac{\rd \Pp}{\rd \Qp}\Big|^{(t,s)}_{\cG_t}\right) \rd s
     \end{align}
     By plugging \eqref{eq: linear KL2} into \eqref{def: robust control} and taking $h\downarrow 0$, we have $1 - \rho(s-t) \uparrow e^{-\rho(s-t)}$ and \eqref{def: robust control 2} satisfied as a consequence. 
\end{proof}



Before showing that \eqref{def: robust control 2} is a solution to some FBSDEs, let us introduce some norms and spaces, and the assumption to which such FBSDEs uniquely exist. 
To that end, we let $t\in [0,T]$ and denote by:
\begin{itemize}
    \item $\CL_T^{\infty}$ the set of the almost surely bounded $\cG_T-$measurable random variable $\xi$, i.e. $|\xi|<\infty$, $\Pp-$a.s. 
    \item $\CL^2_T$ the set of $\cG_T-$measurable random variable $\xi$ such that $\Ep^{\Pp}[|\xi|^2]<\infty$.
    \item $\CH^{\infty}_T$ the set of the almost surely bounded $\bG-$adapted real-valued process $X=\pro{X}$, i.e. $|X_t|<\infty$, $\Pp-$a.s.
    \item $\CH^{2}_T$ the set of $\bG-$adapted real-valued process $X=\pro{X}$ such that $ \Ep^{\Pp}_{\cG_t}\left[\int_t^{T}|X_s|^2 \rd s\right]< \infty$.
\end{itemize}

\begin{definition}\label{ast: terminal+driver}
Given that $t\in[0,T]$, $r=(x,\ldots,x_d)\in \sr^d$ and $(y_i,z_i)\in\sr^2$ for $i=1,2$. The random variable $\xi$ and mapping $F$ are said to be \textit{proper} if
    \begin{enumerate}[{1.)}]
        \item $\xi\in\CL^{\infty}_T$
        \item $F$ is continuous in $(t,r)$.
        \item There exists $C_1<\infty$ such that 
        \begin{align}
            |F(t,r,y_i,z_i)| &\leq C_1(1 + |y_i|+|z_i|^2)\\
            |F(t,r,y_1,z_1) - F(t,r,y_2,z_2)|&\leq C_1\Big( |y_1-y_2|+(1+|y_1|+|y_2|+|z_1|+|z_2|)|z_1-z_2|\Big).
        \end{align}
    \end{enumerate}
\end{definition}
We call a random variable $\xi$ and a mapping $F$, \emph{terminal value} and \emph{driver}, respectively.

The following lemma will later be essential for the verification theorem. The proof can be found in \citet{Zhang2017-jp}, and so is omitted here.
\begin{lemma}[Exponential Martingale]\label{lem: exp mart}
    Let $\delta\in \CH^2_T$ with norm
    \begin{align}
         \Ep^{\Pp}_{\cG_t}\left[\int_t^T |\delta_s|^2 \rd s \right] \leq C_2<\infty,
    \end{align}
    for any $t\in [0,T]$, and suppose that the process $\Gamma=\pro{\Gamma}$ solves 
    \begin{align}
        \rd \Gamma_t = \delta_t \Gamma_t \rd \overline B_t\;\;\;\Gamma_0=1. 
    \end{align}
    Then
    \begin{align}
         \Ep^{\Pp}_{\cG_t}\left[ \expo{\epsilon \int_t^T |\delta_s|^2 \rd s}  \right] \leq \frac{1}{1-\epsilon C_2}, 
    \end{align}
    for any $\epsilon\in(0, \frac{1}{C_2})$, and there is $\epsilon'\in(0, \frac{1}{C_2})$ such that
    \begin{align}
         \Ep^{\Pp}_{\cG_t}\left[ \sup_{t\in [0,T]} |\Gamma_t|^{1+\epsilon} \right] \leq C_{\epsilon'}, 
    \end{align}
    where $C_{\epsilon'}$ depends on $C_2$. In particular, $\Gamma$ is a uniformly integrable martingale.
\end{lemma}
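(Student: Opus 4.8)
The plan is to exploit the explicit structure of the SDE for $\Gamma$: since $\rd\Gamma_t = \delta_t\Gamma_t\,\rd\overline B_t$ with $\Gamma_0 = 1$, the process $\Gamma$ is the Doléans-Dade exponential $\Gamma_t = \exp\bigl(\int_0^t \delta_s\,\rd\overline B_s - \tfrac12\int_0^t|\delta_s|^2\,\rd s\bigr)$, so it is at least a nonnegative local martingale and hence a supermartingale. The first displayed inequality — the exponential-integrability bound $\Ep^{\Pp}_{\cG_t}\bigl[\exp(\epsilon\int_t^T|\delta_s|^2\,\rd s)\bigr]\le (1-\epsilon C_2)^{-1}$ for $\epsilon\in(0,1/C_2)$ — is the key lemma and I would establish it first. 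One clean route is a John–Nirenberg–type estimate: the hypothesis $\Ep^{\Pp}_{\cG_t}\bigl[\int_t^T|\delta_s|^2\,\rd s\bigr]\le C_2$ says the quadratic variation has uniformly bounded conditional mean (a BMO-type control), and for such processes the conditional moment generating function of $\int_t^T|\delta_s|^2\,\rd s$ is controlled geometrically. Concretely, write $A_t := \int_t^T|\delta_s|^2\,\rd s$; by iterating the tower property and the bound $\Ep^{\Pp}_{\cG_s}[A_s]\le C_2$ one gets $\Ep^{\Pp}_{\cG_t}[A_t^n]\le n!\,C_2^n$, and summing the series $\sum_n \epsilon^n\Ep^{\Pp}_{\cG_t}[A_t^n]/n! \le \sum_n (\epsilon C_2)^n = (1-\epsilon C_2)^{-1}$ gives exactly the stated bound.

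With exponential integrability in hand, the uniform-integrability of $\Gamma$ follows from a standard argument: the exponential-moment bound on $\int_0^T|\delta_s|^2\,\rd s$ is more than enough to verify a Novikov-type / Kazamaki-type criterion, so $\Gamma$ is a genuine martingale, not merely a local one; and on a finite horizon $[0,T]$ a true martingale of this exponential form, together with a uniform $L^{1+\epsilon}$ bound, is automatically uniformly integrable. So the remaining substantive point is the $L^{1+\epsilon}$ bound on $\sup_{t\in[0,T]}|\Gamma_t|$. Here I would pick $\epsilon'$ small enough (depending only on $C_2$) and apply Doob's maximal inequality to the submartingale $|\Gamma_t|^{1+\epsilon'}$ after controlling $\Ep^{\Pp}[\Gamma_T^{1+\epsilon'}]$. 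To bound the latter, use the decomposition $\Gamma_T^{1+\epsilon'} = \exp\bigl((1+\epsilon')\int_0^T\delta_s\,\rd\overline B_s - \tfrac{1+\epsilon'}{2}\int_0^T|\delta_s|^2\,\rd s\bigr)$ and rewrite it as a product of a true exponential martingale $\mathcal E_T := \exp\bigl(p\int_0^T\delta_s\,\rd\overline B_s - \tfrac{p^2}{2}\int_0^T|\delta_s|^2\,\rd s\bigr)$ with $p := 1+\epsilon'$ times the correction factor $\exp\bigl(\tfrac{p^2 - p}{2}\int_0^T|\delta_s|^2\,\rd s\bigr)$; applying Hölder's inequality with a conjugate pair $(q_1,q_2)$ splits $\Ep^{\Pp}[\Gamma_T^{p}]$ into $\Ep^{\Pp}[\mathcal E_T^{q_1}]^{1/q_1}$ — which is $\le 1$ once $q_1$ is chosen so that $\mathcal E_T^{q_1}$ is again a genuine exponential supermartingale — and $\Ep^{\Pp}\bigl[\exp\bigl(\tfrac{q_2 p(p-1)}{2}\int_0^T|\delta_s|^2\,\rd s\bigr)\bigr]^{1/q_2}$, which is finite by the exponential-integrability bound provided $\epsilon'$ (hence $p-1$) is small enough that $\tfrac{q_2 p(p-1)}{2} < 1/C_2$.

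The main obstacle is the bookkeeping in this last step: one has to choose the three free parameters $\epsilon'$, $q_1$, $q_2$ simultaneously and consistently — $q_1,q_2$ conjugate, $q_1$ large enough that $\mathcal E_T^{q_1}$ stays a supermartingale, and $\epsilon'$ small enough that the residual exponential moment lands inside the admissible window $(0,1/C_2)$ from the first part — and then verify that the resulting constant $C_{\epsilon'}$ depends only on $C_2$ (through $\epsilon'$, $q_2$, and the Doob constant $(p/(p-1))^p$), not on $\delta$ itself. Everything else is routine: the supermartingale/local-martingale structure of $\Gamma$ is immediate, the combinatorial moment bound $\Ep^{\Pp}_{\cG_t}[A_t^n]\le n!\,C_2^n$ is a short induction, and uniform integrability is then a formality. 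I would therefore present the proof in the order: (1) Doléans-Dade representation and supermartingale property; (2) the moment induction and the geometric-series summation giving the exponential bound; (3) the Hölder-splitting argument for the $L^{1+\epsilon'}$ bound with Doob's inequality; (4) conclude true-martingale and uniform integrability.
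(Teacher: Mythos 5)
The paper does not actually prove this lemma --- it cites \citet{Zhang2017-jp} and omits the argument --- so your proposal has to stand on its own. Its first half does: the induction $\Ep^{\Pp}_{\cG_t}[A_t^n]\le n!\,C_2^n$ (via $A_t^n=n\int_t^T A_s^{n-1}|\delta_s|^2\,\rd s$, the tower property and the $\cG_s$-measurability of $|\delta_s|^2$) and the geometric-series summation is exactly the standard John--Nirenberg argument and gives the bound $\frac{1}{1-\epsilon C_2}$ correctly.

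The gap is in the H\"older-splitting step for the $L^{1+\epsilon'}$ bound. You claim $\Ep^{\Pp}[\mathcal E_T^{q_1}]\le 1$ ``once $q_1$ is chosen so that $\mathcal E_T^{q_1}$ is again a genuine exponential supermartingale,'' but for a H\"older conjugate $q_1>1$ this is false: $\mathcal E_T^{q_1}=\exp\bigl(q_1p\int_0^T\delta\,\rd\overline B-\tfrac{q_1p^2}{2}\int_0^T|\delta|^2\,\rd s\bigr)$ carries only $\tfrac{q_1p^2}{2}$ of compensation instead of the required $\tfrac{(q_1p)^2}{2}$; powers $>1$ of an exponential martingale are submartingales, and already for constant $\delta\equiv c$ one has $\Ep^{\Pp}[\mathcal E_T^{q_1}]=\exp\bigl(\tfrac{q_1(q_1-1)p^2c^2T}{2}\bigr)>1$, a quantity of exactly the type you are trying to control (so the step is also circular). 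The standard repair --- putting the full compensation into the first factor, i.e.\ $\Gamma_T^{p}=\bigl[\mathcal E(pq_1\delta)_T\bigr]^{1/q_1}\exp\bigl(\tfrac{q_1p^2-p}{2}\int_0^T|\delta|^2\,\rd s\bigr)$ --- makes the first H\"older factor $\le1$, but inflates the residual exponent to $\tfrac{q_1p(q_1p-1)}{2(q_1-1)}$, whose infimum over all admissible $p,q_1>1$ is $\tfrac12$. Since your part (2) only controls exponential moments of order $<1/C_2$, this closes only when $C_2<2$; the lemma assumes no smallness of $C_2$. The same restriction invalidates the side remark that the exponential bound ``verifies Novikov'' (Novikov needs the exponent $\tfrac12<1/C_2$). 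For general $C_2$ one needs the genuinely sharper reverse H\"older inequality for stochastic exponentials of BMO martingales (Kazamaki's theorem, where the admissible power $1+\epsilon'$ shrinks to $1$ as the BMO norm grows, followed by Doob's maximal inequality), together with Kazamaki's BMO criterion for uniform integrability --- which is precisely the content of the proof in the cited reference. So the overall architecture is reasonable, but step (3) as written would fail except under an unstated smallness condition on $C_2$.
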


The existence and uniqueness result for FBSDEs satisfying Assumption \ref{ast: terminal+driver} are given by the following comparison theorem. For further detail, we refer to \citet{Kobylanski2000-zw} or \citet{Zhang2017-jp}.

\begin{lemma}[Comparison Theorem]\label{thm: comparison}
    Suppose that the process $(X_i ,A_i, Y_i, Z_i)\in(\CH^2_T)^{d}\times(\CH^{\infty}_T)^{d+1}\times\CH^{\infty}_T\times(\CH^2_T)^{d+1},$ is a solution to 
    \begin{align}
        -\rd Y_{i,t} = F_i(t,X_i,Y_i,Z_i) \rd t +  \rd A_{i,t}  - Z_{i,t} \cdot \rd \boldsymbol{B}_t,\;\;\;Y_{i,T}=\xi_{i,T}
    \end{align}
    where $\boldsymbol{B}$ is a $(d+1)-$Brownian motions under  $\Pp$, $X_i$ satisfy \eqref{eq: suff con a} and \eqref{eq: suff con s}, $A_i\in \cD^{d+1}$, $\xi_{i}$ and $F_i$ are proper, for $i=1,2$. Suppose, furthermore, that:
    \begin{enumerate}[{1.)}]
        \item $A_1 - A_2 \geq \boldsymbol{0}$ a.s.,
        \item $\xi_{1} - \xi_{2} \geq 0$ a.s. and
        \item $F_1 - F_2 \geq 0$ a.e.,
    \end{enumerate}
    where denoted by $\boldsymbol{0}$ is the $(d+1)$-dimensional null-vector. Then $Y_{1,t} - Y_{2,t} \geq 0 $ a.s. for $t\in [0,T]$.
\end{lemma}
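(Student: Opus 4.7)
The plan is to run the classical linearisation scheme for quadratic BSDEs, using the exponential-martingale estimate of Lemma~\ref{lem: exp mart} to legitimise a Girsanov change of measure. Set $\delta Y_t = Y_{1,t}-Y_{2,t}$, $\delta Z_t = Z_{1,t}-Z_{2,t}$, $\delta A_t = A_{1,t}-A_{2,t}$, and $\delta\xi = \xi_{1,T}-\xi_{2,T}$. Subtracting the two BSDEs, $\delta Y$ satisfies
\begin{align*}
    -\rd \delta Y_t = \bigl[F_1(t,X_1,Y_1,Z_1)-F_2(t,X_1,Y_2,Z_2)\bigr]\rd t + \rd\delta A_t - \delta Z_t \cdot \rd \boldsymbol{B}_t,\quad \delta Y_T = \delta\xi,
\end{align*}
where hypothesis~(3.) is read pointwise in the forward argument, so that the driver difference is evaluated at the common $X_1$ (in the paper's application the two candidate BSDEs share the same inventory/filter dynamics). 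Inserting $\pm F_2(t,X_1,Y_1,Z_1)$ and $\pm F_2(t,X_1,Y_2,Z_1)$ decomposes the driver difference as $g_t + \alpha_t\,\delta Y_t + \beta_t\cdot\delta Z_t$, where $g_t := F_1(t,X_1,Y_1,Z_1)-F_2(t,X_1,Y_1,Z_1)\ge 0$ and $\alpha_t,\beta_t$ are the standard difference quotients. Definition~\ref{ast: terminal+driver} then gives $|\alpha_t|\le C_1$ and $|\beta_t|\le C_1(1+|Y_{1,t}|+|Y_{2,t}|+|Z_{1,t}|+|Z_{2,t}|)$, so $\beta\in(\CH^2_T)^{d+1}$ because $Y_i\in\CH^{\infty}_T$ and $Z_i\in(\CH^2_T)^{d+1}$.

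Lemma~\ref{lem: exp mart} applied to $\beta$ then asserts that $\Gamma_t := \exp\!\bigl(\int_0^t\beta_s\cdot\rd\boldsymbol B_s-\tfrac12\int_0^t|\beta_s|^2\rd s\bigr)$ is a uniformly integrable $\Pp$-martingale, so Girsanov's theorem produces an equivalent measure $\widetilde\Pp$, defined by $\rd\widetilde\Pp/\rd\Pp = \Gamma_T$, under which $\widetilde{\boldsymbol B}_t := \boldsymbol B_t - \int_0^t\beta_s\rd s$ is a Brownian motion. The BSDE for $\delta Y$ becomes the linear BSDE $-\rd\delta Y_t = [g_t+\alpha_t\,\delta Y_t]\rd t + \rd\delta A_t - \delta Z_t\cdot\rd\widetilde{\boldsymbol B}_t$. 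Multiplying by the bounded integrating factor $\tau_t := \exp\!\bigl(\int_0^t\alpha_s\,\rd s\bigr)$ and applying Itô's product rule yields $\rd(\tau_t\delta Y_t) = -\tau_t g_t\,\rd t - \tau_t\,\rd\delta A_t + \tau_t\delta Z_t\cdot\rd\widetilde{\boldsymbol B}_t$. Integrating on $[t,T]$ and taking $\widetilde\Pp$-conditional expectations on $\cG_t$ then produces
\begin{align*}
  \tau_t\,\delta Y_t = \Ep^{\widetilde\Pp}_{\cG_t}\!\left[\tau_T\,\delta\xi + \int_t^T \tau_s g_s\,\rd s + \int_t^T \tau_s\,\rd\delta A_s\right],
\end{align*}
whose right-hand side is non-negative term-by-term by hypothesis~(2.) for $\delta\xi$, hypothesis~(3.) for $g$, and hypothesis~(1.) (read, as is standard in reflected/quadratic BSDE comparison, as $\rd(A_1-A_2)\ge 0$ as a measure) for $\rd\delta A$. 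Since $\tau_t>0$, this delivers $\delta Y_t\ge 0$ a.s.

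The main obstacle is the integrability that underpins both the Girsanov change of measure and the vanishing $\widetilde\Pp$-conditional expectation of $\int_t^T\tau_s\delta Z_s\cdot\rd\widetilde{\boldsymbol B}_s$. Because $\beta$ has only at-most-linear growth in $|Z|$ rather than being bounded, the classical bounded-coefficient Girsanov theorem does not apply directly, and the stochastic integral above is a priori only a local $\widetilde\Pp$-martingale. Lemma~\ref{lem: exp mart} is tailored precisely to this: the uniform integrability of $\Gamma$ delivers the equivalent measure, while the $L^{1+\epsilon}$ bound on $\Gamma$, combined with Hölder's inequality applied to $\Gamma_T\cdot\bigl|\int_t^T\tau_s\delta Z_s\cdot\rd\boldsymbol B_s\bigr|$ (or, alternatively, a localisation of $\delta Z$ followed by a dominated-convergence passage), upgrades the local martingale to a genuine $\widetilde\Pp$-martingale on $[t,T]$. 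Once these analytical points are secured, the remainder is routine linear-BSDE bookkeeping.
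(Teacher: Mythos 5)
Your route is the right one and is essentially the proof the paper points to rather than writes out: the paper omits the argument for this lemma, deferring to Kobylanski (2000) and Zhang (2017), and your linearisation of the driver difference into $g_t+\alpha_t\,\delta Y_t+\beta_t\cdot\delta Z_t$ followed by an exponential-martingale change of measure and an integrating factor is exactly the device used in those references (and re-used by the paper itself, with $\Gamma^J,\Gamma^Z$ and Lemma~\ref{lem: exp mart}, in the supersolution half of Theorem~\ref{thm: viscosity 1}). Your reading of the loosely stated hypotheses is also the correct one for the intended application: a common forward process in hypothesis (3.), and hypothesis (1.) interpreted as $\rd(A_1-A_2)\ge 0$ as a measure, which is what makes $\int_t^T\tau_s\,\rd\delta A_s\ge 0$; pointwise nonnegativity of $A_1-A_2$ alone would not suffice there.

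There is, however, one step that is not justified as written: the appeal to Lemma~\ref{lem: exp mart}. That lemma requires the \emph{uniform} bound $\Ep^{\Pp}_{\cG_t}\left[\int_t^T|\beta_s|^2\,\rd s\right]\le C_2$ with a deterministic constant $C_2$ (a BMO-type condition), not mere membership of $\beta$ in $(\CH^2_T)^{d+1}$, which only gives finiteness of that conditional expectation. Your estimate $|\beta_t|\le C_1(1+|Y_{1,t}|+|Y_{2,t}|+|Z_{1,t}|+|Z_{2,t}|)$ therefore does not yet put you within the scope of the lemma, and without it neither the uniform integrability of $\Gamma$ (hence the Girsanov step) nor the H\"older/localisation upgrade of $\int_t^T\tau_s\,\delta Z_s\cdot\rd\widetilde{\boldsymbol B}_s$ to a true $\widetilde\Pp$-martingale is secured; in particular, the higher moments of $\int\delta Z\cdot\rd\boldsymbol B_s$ needed for your H\"older argument do not follow from $\CH^2_T$ alone. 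The missing ingredient is the standard a priori estimate for quadratic BSDEs with bounded solutions: since $Y_i\in\CH^{\infty}_T$ and $F_i$ grows at most quadratically in $z$, applying It\=o's formula to $e^{cY_{i,t}}$ for a suitable constant $c$ (depending on $C_1$ and $\sup_t|Y_{i,t}|$, and with the $\rd A_i$ terms harmless because $A_i$ is bounded and monotone) yields $\Ep^{\Pp}_{\cG_t}\left[\int_t^T|Z_{i,s}|^2\,\rd s\right]\le C$ uniformly in $t$; that is, $Z_i\cdot\boldsymbol B$ is a BMO martingale, hence so is $\beta\cdot\boldsymbol B$, Lemma~\ref{lem: exp mart} applies, and the rest of your argument closes. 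State (or cite) this BMO estimate explicitly; with it your proof is complete, without it the measure change and the final conditional-expectation identity are unsupported.
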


Now we are ready to establish the FBSDE represention for the value function \eqref{def: value function}.
\begin{theorem}\label{thm: FBSDEs}
    There exists a unique solution to $(R^{M^{\Pp},A},A,J^{A},Z) \in (\CH^2_T)^3\times(\CH^{\infty}_T)^2\times\CH^{\infty}_T\times\CH^2_T $, where $R^{M^{\Pp},A}\triangleq(X^{M^{\Pp},A},M^{\Pp},S)$, such that
    \begin{align}
     &\rd X^{M^{\Pp},A}_t &&= (\alpha(X^{M^{\Pp},A}_t) -\sigma(X^{M^{\Pp},A}_t)M^{\Pp}_t )\rd t + \rd A_t+\sigma(X^{M^{\Pp},A}_t)\rd \overline B_t,&&  X^{M^{\Qp},A}_0=x, \label{eq: X^A_t}\\
         &\rd M_t^{\Pp} &&= S_t\rd \overline B_t +\rd D_t , && M^{\Pp}_0=m, \label{eq: M_t}\\
         &\rd S_t &&= -S_t^2 \rd t, && S_0=s, \label{eq: S_t}\\
         &\rd J_t^{A}&&= -F(t,R^{M,A}_t,J_t^{A},Z_t) \rd t  -\ell \rd A^-_t - u \rd A^+_t + Z_t \rd \overline B_t, &&J^{A}_T= \xi(X^{M^{\Pp},A}_T), \label{eq: J^A_t}
    \end{align}
    where 
    \begin{align}
       &F(t,r,y,z)\triangleq F(t,x,m,s,y,z)\triangleq -\rho y + f(x) + \frac{\gamma s}{2}z^2, \\
       &f\;\text{is convex in } x,
    \end{align}
    for $(t,r,y,z)\in[0,T]\times \sr^5$.
    Moreover, $J^{A}$ satisfies the inequality \eqref{def: robust control 2}, where equality holds for which
    \begin{align}\label{def: overline Q}
         \frac{\rd \Pp^{\phantom{\ast}}}{\rd \Qp^\ast}\Big|^{(s,t+h)}_{\cG_t} = \frac{\expo{\gamma (\theta_s - M^{\Pp}_s )Z_s}}{ \Ep^{\Pp}_{\cG_s}\left[\expo{\gamma (\theta_s - M^{\Pp}_s )Z_s} \right]},
    \end{align}
    where $s\in[t,t+h)$.
\end{theorem}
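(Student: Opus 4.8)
The plan is to identify the FBSDE system explicitly, verify that its driver $F$ satisfies the "proper" conditions of Definition \ref{ast: terminal+driver}, invoke the existence/uniqueness theory for quadratic-growth BSDEs (citing \citet{Kobylanski2000-zw, Zhang2017-jp}), and then connect the resulting $Y$-component to the robust-control representation $J^A$ of Lemma \ref{lem: log-tran}. Concretely, I would first fix $A\in\cD$ and note that the forward part $R^{M^{\Pp},A}=(X^{M^{\Pp},A},M^{\Pp},S)$ is exactly the Kalman--Bucy filtered system already constructed in Section \ref{sec:model} (equations \eqref{eq: X^A_t}--\eqref{eq: S_t}); its strong existence and uniqueness, and the bounds placing it in $(\CH^2_T)^3\times(\CH^\infty_T)^2$, follow from \eqref{eq: suff con a}--\eqref{eq: suff con s}, feasibility of $A$, the obstacle reflection at $\mL,\mU$ for $M^{\Pp}$, and the closed-form $S_t=s/(1+st)$ which is deterministic and bounded. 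This part is routine.

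Next I would derive the driver. Starting from the limiting robust representation \eqref{def: robust control 2}, I would write, for $\Qp\in\cP$ generated by a $\bG$-adapted kernel, the relative-entropy term $\tfrac1\gamma\log\frac{\rd\Pp}{\rd\Qp}\big|^{(t,s)}_{\cG_t}$ using the Girsanov density; since only the filtered kernel $M^{\Pp}$ enters the observed dynamics, the effective penalised control problem over the innovation Brownian motion $\overline B$ is a standard stochastic control problem whose Hamiltonian, after the $\tfrac1\gamma\log$ normalisation, produces the quadratic term $\tfrac{\gamma s}{2}z^2$ together with the linear discount and running-cost terms $-\rho y + f(x)$. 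The optimiser in this Hamiltonian is a Gaussian tilt of the conditional law of $\theta_s$ given $\cG_s$, which is precisely \eqref{def: overline Q}; I would verify this by completing the square in the exponent and using that $\theta_s-M^{\Pp}_s$ is conditionally $\N(0,S_s)$ under $\Pp$. Having identified $F$, I would check Definition \ref{ast: terminal+driver}: continuity in $(t,r)$ is immediate; the growth bound $|F|\le C_1(1+|y|+|z|^2)$ holds because $f$ is bounded on the (a.s. bounded) range of $X^{M^{\Pp},A}$ and $s\mapsto \gamma s/2$ is bounded for $s\in(0,s]$; the local-Lipschitz-in-$z$ estimate holds with the stated modulus since $\partial_z F = \gamma s z$ is linear in $z$. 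The terminal value $\xi(X^{M^{\Pp},A}_T)\in\CL^\infty_T$ by assumption on $\xi$. Then Kobyla\'nski-type existence/uniqueness gives a unique $(J^A,Z)\in\CH^\infty_T\times\CH^2_T$ solving \eqref{eq: J^A_t}, where the bounded-variation terms $-\ell\,\rd A^-_t-u\,\rd A^+_t$ are absorbed as in Lemma \ref{thm: comparison}.

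Finally I would establish that this $J^A$ coincides with the limit in Lemma \ref{lem: log-tran} and that the inequality \eqref{def: robust control 2} is saturated at $\Qp^\ast$. For the inequality direction I would apply the Comparison Theorem (Lemma \ref{thm: comparison}) comparing the BSDE for $J^A$ against, for each fixed $\Qp$, the linear BSDE whose driver is $-\rho y + f(x) + $ (the $\Qp$-dependent entropy density), using that $\tfrac{\gamma s}{2}z^2 = \sup_{q}\{\,\gamma s\, q z - \tfrac{\gamma s}{2}q^2\,\}$ realises the Hamiltonian as a supremum; this yields $J^A \ge \sup_{\Qp}\Ep^{\Qp}_{\cG_t}[\cdots]$. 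For the reverse, plugging the candidate $\Qp^\ast$ from \eqref{def: overline Q} turns the BSDE inequality into an equality, and the martingale property needed to take expectations is exactly what Lemma \ref{lem: exp mart} (Exponential Martingale) delivers, since $Z\in\CH^2_T$ and the Girsanov density driven by $\gamma(\theta_s-M^{\Pp}_s)Z_s$ is then a uniformly integrable martingale.

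The main obstacle I anticipate is the rigorous passage between the discrete recursive/limit construction of $J^A$ in Lemma \ref{lem: log-tran} and the continuous-time BSDE: one must justify that $\lim_{h\downarrow 0}K^h$ genuinely solves \eqref{eq: J^A_t} (not merely satisfies the inequality), which requires a stability/convergence argument for the exponential-utility recursions as $h\downarrow 0$ and control of the quadratic term — this is where the $\CH^\infty_T$ a priori bound on $J^A$ and the exponential-moment estimate of Lemma \ref{lem: exp mart} do the heavy lifting, and where the condition that $f$ be convex and $X^{M^{\Pp},A}$ be a.s. bounded (so the range of $f$ is bounded) is essential for the quadratic BSDE theory to apply.
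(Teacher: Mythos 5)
Your proposal is correct and follows essentially the same route as the paper: establish the forward filtered system and verify that the driver $F(t,r,y,z)=-\rho y+f(x)+\tfrac{\gamma s}{2}z^2$ is proper so that quadratic-BSDE theory (Kobylanski/Zhang) yields existence and uniqueness, then prove the inequality \eqref{def: robust control 2} by entropic duality and saturate it with the Gaussian exponential tilt \eqref{def: overline Q}, whose form comes from $\theta_s-M^{\Pp}_s$ being conditionally $\N(0,S_s)$ under $\Pp$. The only minor divergences are mechanical: the paper derives the inequality by writing $\tfrac{\gamma S_s}{2}Z_s^2$ as the conditional log-moment-generating function of $\gamma(\theta_s-M^{\Pp}_s)Z_s$, reusing the Jensen/logarithmic-transformation step of Lemma \ref{lem: log-tran} together with a Girsanov change to the $\Qp$-innovation process (rather than your Fenchel--Legendre linearization combined with Lemma \ref{thm: comparison}), and it justifies admissibility of $\Qp^\ast$ via the dual variational formula of Lee (2022) where you invoke Lemma \ref{lem: exp mart}.
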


\begin{proof}[Proof of Theorem \ref{thm: FBSDEs}]
    We first establish existence and uniqueness of $(R^{M,A},A,J^{A},Z)$. It is clear that $R^{M,A}$ admits a unique solution since $\alpha$ and $\sigma$ satisfy \eqref{eq: suff con a} and \eqref{eq: suff con s}, respectively. The Riccati equation \eqref{eq: S_t} gives that $S_t = \frac{s}{1+st},\;\; t\in[0,T]$, which implies that $\sup_{t\geq0}|S_t|\leq s$. 
    The continuity of $f$ in $R^{M,A}$ and its Lipschitz continuity with linear growth in $J^{A}$ ensure that
    \begin{align*}
        |F(t,r,y,z)|&\leq C_f |y|  + \frac{\gamma |s|}{2}z^2 \\
        |F(t,r,y_1,z_2) - F(t,r,y_2,z_2)| &\leq C_f |y_1-y_2| + \frac{\gamma |s|}{2}(z_1^2-z_2^2) \\&= C_f |y_1-y_2| + \frac{\gamma |s|}{2}(z_1+z_2)(z_1-z_2)
    \end{align*}
    where $C_f$ is the Lipschitz constant of $f$. Therefore, if we choose $C_1=$ $C_f + \frac{\gamma |s|}{2}$, then $F$ is proper. Together with the boundedness of $\xi$, we conclude that there is a unique solution to $(R^{M^{\Pp},A},A,J^{A},Z) \in (\CH^2_T)^3\times(\CH^{\infty}_T)^2\times\CH^{\infty}_T\times\CH^2_T $.
    
    Next, we show that $J^{A}_t$ satisfies the inequality \eqref{def: robust control 2}. Note that \eqref{eq: J^A_t} implies
    \begin{align}\label{eq: EY}
        J^{A}_t &=  \Ep^{\Pp}_{\cG_t}\Bigg[ \int_t^{t+h}   \left[ \left( -\rho J_s^{A} +f(X^{M^{\Pp},A}) +\frac{\gamma S_s}{2}Z^2_s  \right)\rd s +\ell \rd A^-_s + u \rd A^+_s\right] +  J^{A}_{t+h}  \Bigg], 
    \end{align}
    for some $h>0$ such that $t+h\leq T$. Now we fix $\Qp\in\cP$.
    Consequently, for all $s\in[t,t+h)$, one can infer that \begin{align}\label{eq: SZ}
        \frac{\gamma S_{s}}{2}Z^2_{s}&= \frac{1}{\gamma} \log\left( \expo{\gamma \Ep^{\Pp}_{\cG_s}[\theta_s - M^{\Pp}_s]Z_s + \frac{\gamma^2}{2} \Ep^{\Pp}_{\cG_s}[(\theta_s - M^{\Pp}_s)^2]Z^2_s} \right)\nonumber\\
        &= \frac{1}{\gamma}\log \Ep^{\Pp}_{\cG_s}\left[\expo{\gamma (\theta_{s}-M^{\Pp}_{s})Z_{s}}\right]\nonumber\\
        &\geq \Ep^{\Qp}_{\cG_s}\left[(\theta_s-M^{\Pp}_s)Z_s + \frac{1}{\gamma} \log\left( \frac{\rd \Pp}{\rd \Qp} \Big|^{(s,t+h)}_{\cG_s} \right)  \right] \nonumber\\
        &= - (M^{\Pp}_s -  M^{\Qp}_s)Z_s + \Ep^{\Qp}_{\cG_s}\left[\frac{1}{\gamma} \log\left( \frac{\rd \Pp}{\rd \Qp} \Big|^{(s,t+h)}_{\cG_s} \right)   \right].
    \end{align}
     The second equality follows from the moment generating function of $\theta_{s}-M^{\Pp}_{s}$, while the definition of logarithmic transformation (similar to Lemma \ref{lem: log-tran}) gives rise to inequality in the third line. Thus, we have
    \begin{align}\label{eq: EY2}
        J^{A}_t &\geq \Ep^{\Pp}_{\cG_t} \Bigg[ \int_t^{t+h}   \bigg[ \Bigg(  -\rho J_s^{A} +f(X^{M^{\Pp},A}) - (M^{\Pp}_s -  M^{\Qp}_s)Z_s \nonumber\\&\hspace{3cm}+
        \Ep^{\Qp}_{\cG_s}\left[\frac{1}{\gamma} \left( \log\frac{\rd \Pp}{\rd \Qp}\Big|^{(s,t+h)}_{\cG_s} \right) \right]  \Bigg)\rd s+\ell \rd A^-_s + u \rd A^+_s\bigg] +  J^{A}_{t+h} \Bigg] .
    \end{align}
   The change of measure from $\Pp$ to $\Qp$ gives
    \begin{align*}
        \overline B_t= \widehat B_t - \int_0^t (M^{\Qp}_s -  M^{\Pp}_s )\rd s,\;\;\;t\in[0,T],
    \end{align*}
    where $\widehat B$ is a $\Qp$--innovation process.
   As a result, we obtain that
    \begin{align*}
        J^{A}_t &\geq\Ep^{\Qp}_{\cG_t} \Bigg[  \int_t^{t+h}   \Bigg[ \left(  -\rho J_s^{A} +f(X^{M^{\Pp},A}) + \Ep^{\Qp}_{\cG_s}\left[\frac{1}{\gamma} \left( \log\frac{\rd \Pp}{\rd \Qp}\Big|^{(s,t+h)}_{\cG_s}\right)  \right] \right)\rd s \\&\hspace{8cm}  +\ell \rd A^-_s + u \rd A^+_s\Bigg]+  J^{A}_{t+h}\Bigg].
    \end{align*}
    Since $M^{\Pp}$ is càdlàg, taking $h\downarrow 0$ gives that $\cG_s\downarrow\cG_t$. Combining with the arbitrariness of $\Qp\in \cP$ and Lemma \ref{lem: log-tran}, we have \begin{align}\label{def: robust control 3}        
        J^{A}_t\geq&\sup_{\Qp\in\cP}\Ep^{\Qp}_{\cG_t}\Bigg[\int_t^{T} \bigg[\left(-\rho J_s^{A} + f(X^{M^{\Pp},A}) + \frac{1}{\gamma} \log\left(\frac{\rd \Pp}{\rd \Qp} (t,s) \right)\right)\rd s \nonumber\\&\hspace{7cm} +\ell \rd A^-_s + u \rd A^+_s \bigg] + \xi(X^{M^{\Pp},A}_T)\Bigg],
    \end{align}
    By It\={o}'s lemma, we simply have \eqref{def: robust control 2}.
    
    Finally, we choose $\Qp^\ast\sim\Pp$ such that the corresponding Radon-Nikodym derivative takes the form 
    \begin{align}\label{def: overline Q 2}
          \frac{\rd \Pp^{\phantom{\ast}}}{\rd \Qp^\ast}\Big|^{(s,t+h)}_{ \cG_{s}} = \frac{\expo{\gamma (\theta_s - M^{\Pp}_s )Z_s}}{ \Ep^{\Pp}_{\cG_s}\left[\expo{\gamma (\theta_s - M^{\Pp}_s )Z_s} \right]}.
    \end{align}
    Due to the dual formula for variational inference given by \citet{Lee2022-qb}, Theorem 2.1, and the integrability of all integrands standing in \eqref{def: overline Q}, it holds that the Radon-Nikodym derivatives~\eqref{def: overline Q 2} is well-defined. By plugging \eqref{def: overline Q} into \eqref{eq: SZ}, it is easy to see that equation \eqref{eq: SZ} achieves the equality. This infers that the equality holds for \eqref{def: robust control 3} and $ \Qp^\ast$ gives rise to the supremum of the robust control problem (in limit),
which is the completion of Theorem \ref{thm: FBSDEs}.
\end{proof}

\section{Heuristic derivation of the optimal control policy}\label{sec:derivation}
Before we develop a heuristic derivation of the optimal control policy, we first show that the value function \eqref{def: value function} satisfies the dynamic programming principle (DPP). 

\begin{lemma}[Dynamic Programming Principle]\label{lem: DPP}
    Suppose that for a given feasible policy $A\in \cD$, $J^{A}$ is the solution to \eqref{eq: J^A_t}. 
    Then for any $h\in[0,T]$ and $t\in[0,T-h]$ it holds that
    \begin{align}\label{eq: DPP}
        \inf_{A\in \cD} J^{A}_t = \inf_{A\in \cD}  \Ep^{\Pp}_{\cG_t}\Bigg[ \int_t^{t+h}   \left( F(s,R^{M,A}_s,J_s^{A},Z_s)  \rd s +\ell \rd A^-_s + u \rd A^+_s \right)  + \inf_{A\in \cD} J^{A}_{t+h}      \Bigg]
    \end{align}
\end{lemma}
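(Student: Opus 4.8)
The plan is to prove the two inequalities in \eqref{eq: DPP} separately, with the \emph{flow property} of the quadratic BSDE \eqref{eq: J^A_t} as the main tool. For $A\in\cD$ and a bounded $\cG_{t+h}$-measurable random variable $\eta$, write $\mathcal{S}^{A}_{t,t+h}[\eta]\triangleq Y_t$, where $(Y,\zeta)$ is the unique solution on $[t,t+h]$ of the backward equation with driver $(s,y,z)\mapsto F(s,R^{M,A}_s,y,z)$, singular increments $\ell\,\rd A^{-}+u\,\rd A^{+}$, and terminal value $Y_{t+h}=\eta$ (existence, uniqueness and boundedness hold exactly as in the proof of Theorem \ref{thm: FBSDEs}). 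Integrating over $[t,t+h]$ and taking $\Ep^{\Pp}_{\cG_t}$ — the $\overline B$-integral being a martingale increment since $\zeta\in\CH^2_T$, precisely as in \eqref{eq: EY} — gives
\begin{align*}
\mathcal{S}^{A}_{t,t+h}[\eta]=\Ep^{\Pp}_{\cG_t}\!\left[\int_t^{t+h}\!\Big(F(s,R^{M,A}_s,Y_s,\zeta_s)\,\rd s+\ell\,\rd A^{-}_s+u\,\rd A^{+}_s\Big)+\eta\right].
\end{align*}
With $\eta=J^{A}_{t+h}$, uniqueness of \eqref{eq: J^A_t} gives $J^{A}_t=\mathcal{S}^{A}_{t,t+h}[J^{A}_{t+h}]$, so the right-hand side of \eqref{eq: DPP} reads $\inf_{A\in\cD}\mathcal{S}^{A}_{t,t+h}[\underline J_{t+h}]$, where $\underline J_{t+h}$ is the optimal cost of the remaining problem at $t+h$, i.e.\ the essential infimum of $J^{A'}_{t+h}$ over feasible $A'$ coinciding with the outer policy on $[0,t+h]$ — a $\cG_{t+h}$-measurable random variable lying between $0$ and $J^{0}_{t+h}\in\CH^{\infty}_T$ (recall $f,\xi\ge0$, $\ell,u>0$, the entropy penalty is nonnegative, and $0\in\cD$). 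Besides monotonicity of $\mathcal{S}^{A}_{t,t+h}$ in $\eta$ (Lemma \ref{thm: comparison}, applied with identical driver and singular part), I will use the shift identity $\mathcal{S}^{A}_{t,t+h}[\eta+\varepsilon]=\mathcal{S}^{A}_{t,t+h}[\eta]+e^{-\rho h}\varepsilon$ for constants $\varepsilon$: since $F$ depends on $y$ only through $-\rho y$, the difference solves a linear BSDE with bounded drift coefficient $-\rho$, square-integrable $\zeta$-coefficient $\tfrac{\gamma}{2}S_s(\zeta^{1}_s+\zeta^{2}_s)$ (with $S$ bounded), and terminal value $\varepsilon$, so the claim follows from the exponential-martingale estimate of Lemma \ref{lem: exp mart}.

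For ``$\ge$'' it suffices to show $J^{A}_t\ge\inf_{B\in\cD}\mathcal{S}^{B}_{t,t+h}[\underline J_{t+h}]$ for every $A\in\cD$. Since $J^{A}_{t+h}\ge\underline J_{t+h}$ $\Pp$-a.s.\ ($A$ being a continuation of itself), monotonicity and the flow property give $J^{A}_t=\mathcal{S}^{A}_{t,t+h}[J^{A}_{t+h}]\ge\mathcal{S}^{A}_{t,t+h}[\underline J_{t+h}]\ge\inf_{B\in\cD}\mathcal{S}^{B}_{t,t+h}[\underline J_{t+h}]$, and taking the infimum over $A$ concludes this direction.

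For ``$\le$'' fix $A\in\cD$ and $\varepsilon>0$; the task is to build $\hat{A}\in\cD$ coinciding with $A$ on $[0,t+h]$ with $J^{\hat{A}}_{t+h}\le\underline J_{t+h}+\varepsilon$ $\Pp$-a.s. This rests on the fact that the family of $J^{A'}_{t+h}$ with $A'\in\cD$, $A'\equiv A$ on $[0,t+h]$, is downward directed: given two such policies and $\Lambda\in\cG_{t+h}$, following the first on $\Lambda$ and the second on $\Lambda^{c}$ from time $t+h$ on again produces a feasible policy (the controlled inventory stays within the two-sided bounds of $\cD$, being a $\cG_{t+h}$-measurable splice of bounded $\bG$-adapted processes), whose value at $t+h$ is the corresponding splice by uniqueness of \eqref{eq: J^A_t}. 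Hence $\underline J_{t+h}$ is the nonincreasing $\Pp$-a.s.\ limit of some $(J^{A_n}_{t+h})_n$ with $A_n\equiv A$ on $[0,t+h]$, and a standard measurable-selection argument over a $\cG_{t+h}$-measurable partition glues these into the required $\hat{A}$. Since $\hat{A}\equiv A$ on $[0,t+h]$, the restriction of $J^{\hat{A}}$ to $[t,t+h]$ solves the backward equation with driver $(s,y,z)\mapsto F(s,R^{M,A}_s,y,z)$, singular increments from $A$, and terminal value $J^{\hat{A}}_{t+h}$, so $J^{\hat{A}}_t=\mathcal{S}^{A}_{t,t+h}[J^{\hat{A}}_{t+h}]$ by uniqueness. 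Monotonicity and the shift identity then give
\begin{align*}
\inf_{B\in\cD}J^{B}_t\le J^{\hat{A}}_t=\mathcal{S}^{A}_{t,t+h}[J^{\hat{A}}_{t+h}]&\le\mathcal{S}^{A}_{t,t+h}[\underline J_{t+h}+\varepsilon]\\
&=\mathcal{S}^{A}_{t,t+h}[\underline J_{t+h}]+e^{-\rho h}\varepsilon.
\end{align*}
Letting $\varepsilon\downarrow0$ and then taking the infimum over $A\in\cD$ yields $\inf_{B\in\cD}J^{B}_t\le\inf_{A\in\cD}\mathcal{S}^{A}_{t,t+h}[\underline J_{t+h}]$, which together with ``$\ge$'' proves \eqref{eq: DPP}.

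The main obstacle is the concatenation step in ``$\le$'': verifying that the $\cG_{t+h}$-spliced policy $\hat{A}$ genuinely lies in $\cD$ — i.e.\ that the inventory process it generates is $\bG$-adapted and two-sided bounded — and carrying out the measurable selection that turns the minimizing sequence $(A_n)$ into one $\varepsilon$-optimal continuation valid on all of $\Omega$, with due care for the precise meaning of the inner infimum (the continuation value at $t+h$). Everything else — the flow property, monotonicity via Lemma \ref{thm: comparison}, the shift identity via Lemma \ref{lem: exp mart}, and the martingale property of the $\zeta$-integral — is routine given the estimates already established for Theorem \ref{thm: FBSDEs}.
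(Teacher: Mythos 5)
Your proof is essentially correct, but it takes a genuinely different route from the paper, and it proves a (standard) variant of the displayed identity rather than its literal form. The paper's own argument is much shorter: it writes $J^A_t$ as the conditional expectation of the integrated driver \emph{along the original solution} $(J^A,Z)$ of \eqref{eq: J^A_t} plus $J^A_{t+h}$ (as in \eqref{eq: EY}), so the right-hand side of \eqref{eq: DPP} is affine in the plugged-in terminal term; the inequality ``$\geq$'' is then immediate from $J^A_{t+h}\geq\inf_A J^A_{t+h}$, and ``$\leq$'' follows by inserting an $\epsilon$-optimal continuation policy $\overline A$ and letting $\epsilon\downarrow 0$. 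You instead prove the nonlinear backward-semigroup form of the DPP: you re-solve the BSDE on $[t,t+h]$ with terminal value $\underline J_{t+h}$, and use the comparison theorem (Lemma \ref{thm: comparison}), an exact discounting shift identity (which is correct: since $F$ is affine in $y$ with rate $-\rho$ and the $z$-argument is unchanged, the shifted solution is $Y_s+e^{-\rho(t+h-s)}\varepsilon$ with the same $\zeta$), and a downward-directedness/splicing construction of an a.s.\ $\varepsilon$-optimal continuation. What your approach buys is rigour precisely where the paper is silent: the existence of a single policy $\overline A$ that is $\varepsilon$-optimal $\omega$-wise at $t+h$ is asserted without proof in the paper, whereas you identify the pasting/measurable-selection argument needed (though you sketch rather than execute it, which is no worse than the paper). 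What it costs is that, because the driver is quadratic in $z$, the expression $\mathcal{S}^A_{t,t+h}[\underline J_{t+h}]$ is \emph{not} the same as the expectation written in \eqref{eq: DPP}, where $J^A_s$ and $Z_s$ in the integrand are by hypothesis the original solution of \eqref{eq: J^A_t}; under that literal reading the right-hand side is linear in the substituted terminal value and the equality is nearly trivial, which is exactly the shortcut the paper takes. Both formulations are legitimate statements of the DPP (and the paper itself switches to your reading implicitly in Section \ref{sec:derivation}), but you should flag that you are proving the semigroup version, not the displayed one.
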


\noindent\begin{proof}[Proof of Lemma \ref{lem: DPP}]
    By definition, we have
    \begin{align*}
        J^{A}_t &=  \Ep^{\Pp}_{\cG_t}\Bigg[ \int_t^{T}   \bigg( F(s,R^{M,A}_s,J_s^{A},Z_s)  \rd s +\ell \rd A^-_s + u \rd A^+_s \bigg) + \xi(X^{M^{\Pp},A}_T)     \Bigg] \\
        &=  \Ep^{\Pp}_{\cG_t}\Bigg[ \int_t^{t+h}   \bigg( F(s,R^{M,A}_s,J_s^{A},Z_s)  \rd s +\ell \rd A^-_s + u \rd A^+_s \bigg) +  J^{A}_{t+h}      \Bigg].
    \end{align*}
    Thus,
    \begin{align*}
        \inf_{A\in \cD} J^{A}_t &=\inf_{A\in \cD}   \Ep^{\Pp}_{\cG_t}\Bigg[ \int_t^{t+h}   \bigg( F(s,R^{M,A}_s,J_s^{A},Z_s)  \rd s +\ell \rd A^-_s + u \rd A^+_s \bigg) +  J^{A}_{t+h}      \Bigg]\\
        &\geq \inf_{A\in \cD}  \Ep^{\Pp}_{\cG_t}\Bigg[ \int_t^{t+h}   \bigg( F(s,R^{M,A}_s,J_s^{A},Z_s)  \rd s +\ell \rd A^-_s + u \rd A^+_s \bigg) + \inf_{A\in \cD} J^{A}_{t+h}      \Bigg].
    \end{align*}
    We next show that the opposite inequality also holds. Since $\inf_{A\in \cD} J^{A}_{t+h}$ is minimal among other feasible policies, the for any $\epsilon>0$, there exists a policy $\overline A\in \cD$ such that $\inf_{A\in \cD} J^{A}_{t+h} \geq  J^{\overline{L},\overline{U}}_{t+h} - \epsilon$. It follows that 
    \begin{align*}
        \inf_{A\in \cD} J^{A}_t &\leq   \Ep^{\Pp}_{\cG_t}\Bigg[ \int_t^{t+h}   \left( F(s,R^{M,\overline A}_s,J_s^{\overline A},Z_s)  \rd s +\ell \rd \overline A^-_s + u \rd \overline A^+_s \right) +  J^{\overline{L},\overline{U}}_{t+h}      \Bigg]\\
        &\leq   \Ep^{\Pp}_{\cG_t}\Bigg[ \int_t^{t+h}   \left( F(s,R^{M,\overline A}_s,J_s^{\overline A},Z_s)  \rd s +\ell \rd \overline A^-_s + u \rd \overline A^+_s \right) +  \inf_{A\in \cD} J^{A}_{t+h} + \epsilon      \Bigg]
    \end{align*}
    By arbitrariness of $\epsilon$, Lemma \ref{lem: DPP} is obtained.
\end{proof}

In order to solve \eqref{eq: DPP}, we first try the classical approach of the optimal (singular) control by \emph{assuming} that there is a $C^{1,2}([0,T]\times \sr^3)$ function $v(t,r)$ such that $v(t,r) = \inf_{A\in \cD} J^{A}_t$ given that $R^{M,A}_{t} = r= (x,m,s)$. If this assumption holds, then we can rewrite \eqref{eq: DPP} as
\begin{align}\label{eq: DPP v}
    v(t,r) &= \inf_{A\in \cD}  \Ep^{\Pp}_{\cG_t}\Bigg[ \int_t^{t+h}   \bigg( F(s,R^{M,A}_s,J_s^{A},Z_s)  \rd s +\ell \rd A^-_s + u \rd A^+_s \bigg)  + v(t+h,R^{M,A}_{t+h})      \Bigg]
\end{align}
By applying It\={o}'s lemma to $v(t+h,R^{M,A}_{t+h})$ we can see that
\begin{align}\label{eq: HJB1}
    v(t,r) &= \inf_{A\in \cD}  \Ep^{\Pp}_{\cG_t}\Bigg[ v(t,r) + \int_t^{t+h}   \Bigg[  \bigg(\cL v(s,R^{M,A}_s)  +F(s,R^{M,A}_s,J_s^{A},Z_s) \bigg) \rd s \nonumber \\ & \hspace{4cm} + \left(\ell +\diff{v(s,R^{M,A}_s)}{x}\right)\rd A^-_s + \left(u-\diff{v(s,R^{M,A}_s)}{x}\right)\rd A^+_s   \Bigg]  \nonumber\\ & \hspace{4cm} + \diff{v(s,R^{M,A}_s)}{m}\rd D^-_s  -\diff{v(s,R^{M,A}_s)}{m}\rd D^+_s   \Bigg]      \Bigg]
\end{align}
where $\cL$ is so-called the infinitestimal generator operator of $v$, and defined by
\begin{align*}
    \cL v \triangleq \diff{v}{t}  + [\alpha(x)-m\sigma(x)]&\diff{v}{x} -s^2\diff{v}{s}  +\frac{\sigma^2(x)}{2}\frac{\partial^2v}{\partial x^2} +\frac{s^2}{2}\frac{\partial^2v}{\partial m^2} + s\sigma(x)\frac{\partial^2v}{\partial x\partial m}
\end{align*}
It is important to note that in order for a feasible policy to be optimal, it is sufficient from \eqref{eq: HJB1} that for any $A\in \cD$, 
\begin{align}\label{def: actions inequalities}
    \cL v(s,r) + F(t,r,y,z) \geq 0,\;\;\;\;  \ell +\diff{v(s,r)}{x} \geq 0,\;\;\;\;\textup{and}\;\;\;\;
     u - \diff{v(s,r)}{x} \geq 0,
\end{align}
for any $(s,r)\in [0,T]\times \sr^3$. That is to say,
\begin{align*}
    0 &\leq  \Ep^{\Pp}_{\cG_t}\Bigg[ \int_t^{t+h}   \Bigg[  \left(\cL v(s,R^{M^{\Pp},A}_s) +  F(s,R^{M^{\Pp},A}_s,J_s^{ A},Z_s) \right) \rd s \nonumber \\ & \hspace{4cm} + \left(\ell +\diff{v(s,R^{M^{\Pp},A}_s)}{x}\right)\rd  L^x_s + \left(u-\diff{v(s,R^{M^{\Pp},A}_s)}{x}\right)\rd  U^x_s   \Bigg]  \nonumber\\ & \hspace{4cm} + \diff{v(s,R^{M,A}_s)}{m}\rd D^-_s  -\diff{v(s,R^{M,A}_s)}{m}\rd D^+_s   \Bigg]      \Bigg],
\end{align*}
and the equality holds if there exists the optimal control policy $A^\ast\in \cD$. We show later by the verification theorem that these are also necessary conditions. Otherwise, a contradiction occurs. 

In order to establish the optimality conditions for the feasible control, let us recall the main idea behind  classical singular control given by \citet{harrison1983instantaneous}. Now we define by
\begin{align}
     h_{\underline m} \triangleq \inf\left\{h>0 :  M^{\Pp}_{t+h}=\mL\right\}\text{  and    }h_{\overline m} \triangleq\inf\left\{h>0 :  M^{\Pp}_{t+h}=\mU\right\}
\end{align} 
a $\cG_t$--stopping time. Here the operator $\wedge$ is that $a\wedge b = \min\{a,b\}$ for any $a,b\in\sr$. This stopping time determines a sequence of actions for the DM to consider. That is, for a given $\epsilon>0$ if $h_{\underline m}\wedge h_{\overline m}<\epsilon$, DM must reflect the belief process upward or downward, respectively, to keep $M^{\Pp}\in [\mL,\mU]$. Therefore, the value function is either
\begin{align}
    v_{\mL}(t,r)  &\triangleq \Ep^{\Pp}_{\cG_t}\left[v(t,r)+\int_t^{t+h_{\underline m}}\diff{v(s,R^{M^{\Pp},A}_s)}{m}\rd   D^-_s \right],\;\;\textup{or} \\
    v_{\mU}(t,r)  &\triangleq\Ep^{\Pp}_{\cG_t}\left[v(t,r) - \int_t^{t+h_{\overline m}}\diff{v(s,R^{M^{\Pp},A}_s)}{m}\rd   D^+_s \right].
\end{align}
On the other hand, if $h_{\underline m}\wedge h_{\overline m} > \epsilon$, there are three options to exercise;
(i) to exert the lower or (ii) upper singular control, or (iii) to remain inactive and incur the continuation value. It follows from \eqref{eq: HJB1} that the value function on $[t,t+\epsilon)$ for these options is either
\begin{align}
    v_{\ell}(t,r)  &\triangleq \Ep^{\Pp}_{\cG_t}\left[v(t,r)+\int_t^{t+\epsilon}\left(\ell +\diff{v(s,R^{M^{\Pp},A}_s)}{x}\right)\rd    A^-_s \right],\\
    v_{u}(t,r)  &\triangleq \Ep^{\Pp}_{\cG_t}\left[v(t,r)+\int_t^{t+\epsilon}\left(u -\diff{v(s,R^{M^{\Pp},A}_s)}{x}\right)\rd    A^+_s \right], \;\;\textup{or} \\
     v_c(t,r)  &\triangleq \Ep^{\Pp}_{\cG_t}\left[v(t,r)+\int_t^{t+\epsilon}\left(\cL v(s,R^{A}_s)  +F(s,R^{M^{\Pp},A}_s,J_s^{  A},Z_s)\right)\rd s \right],
\end{align}
respectively. Here there DM must take the option that minimises the cost function, then value function at $t+h_m\wedge\epsilon$ satisfies
\begin{align}\label{eq: HJB2}
    v(t,r)=\mathds1_{\{h_{\underline m}\wedge h_{\overline m}>\epsilon\}}\min\left\{v_{\ell}(t,r),v_{u}(t,r),v_{c}(t,r)\right\} +  \mathds1_{\{h_{\underline m}< h_{\overline m}<\epsilon\}}v_{\mL}(t,r)+  \mathds1_{\{h_{\overline m}< h_{\underline m}<\epsilon\}}v_{\mU}(t,r).
\end{align}
Dividing \eqref{eq: HJB2} by $h_{\underline m}\wedge h_{\overline m}\wedge\epsilon$ and send $\epsilon \to 0$, the MVT gives
\begin{align}
    0&=\min\left\{\cL v(t,r) +F(s,r,J_t^{A^\ast},Z_t), \;\;\ell + \diff{v(t,r)}{x},\;\; u - \diff{v(t,r)}{x}  \right\}, \\
    0&=\diff{v(t,x,\mL+,s)}{m}=\diff{v(t,x,\mU-,s)}{m}\\
    v(T,r)&=\xi(x).
\end{align}
By It\={o}'s lemma and the definition of $v$ it follows that
\begin{align}\label{eq: sol1}
    J_t^{A^\ast} = v(t,r)\;\;\textup{and}\;\;Z_t = \sigma(x) \diff{v(t,r)}{x} + s \diff{v(t,r)}{m}
\end{align}
Therefore, if the regularity of $v$ holds then we finally have that $v$ is the solution to the following HJB equation for singular control,
\begin{align}\label{eq: HJB3}
    0&=\min\left\{\cL v(t,r) + F\left(t,r,v(t,r),\sigma(x) \diff{v(t,r)}{x} + s \diff{v(t,r)}{m}\right),\;\; \ell + \diff{v(t,r)}{x},\;\; u - \diff{v(t,r)}{x}  \right\},\\
    0&=\diff{v(t,x,\mL+,s)}{m}=\diff{v(t,x,\mU-,s)}{m}\\
    v(T,r)&=\xi(x).
\end{align}
Now we defines by $\CC_v,\LB_v,\UB_v\subset[0,T]\times\sr$ such that
\begin{align}
    \LB_v&\triangleq\left\{(t,x)\in [0,T]\times\sr: -\ell\geq\diff{v(t,r)}{x}\right\}\label{def: lower stopped region}\\
    \UB_v&\triangleq\left\{(t,x)\in [0,T]\times\sr: \phantom{-}u\leq\diff{v(t,r)}{x} \right\}\;\;\;\text{and,}\label{def: upper stopped region}\\
    \CC_{ v}&\triangleq[0,T]\times\sr\setminus (\LB_{v}\cup\UB_{ v}),\label{def: continuation region}
\end{align}
It can be noticed as regard the definition of $\CC_v,\LB_v$ and $\UB_v$ that HJB equation \eqref{eq: HJB3} is equivalent to solving
\begin{align}
    \cL v(t,r) + F\left(t,r,v(t,r),\sigma(x) \diff{v(t,r)}{x} + s \diff{v(t,r)}{m}\right)&=0\;\;\;(t,x)\in\CC_v \label{eq: continue PDE}\\
    \ell + \diff{v(t,r)}{x}&=0\;\;\;(t,x)\in\LB_v\label{eq: lower barrier PDE}\\
    u - \diff{v(t,r)}{x}&=0\;\;\;(t,x)\in\UB_v.\label{eq: upper barrier PDE}\\
    \diff{v(t,x,\mL+,s)}{m}=\diff{v(t,x,\mU-,s)}{m}&=0 \label{eq: m barrier PDE}
\end{align}
This tells us that it is optimal to incur the continuation value, rather than exercising control, as long as the inventory stock explores in $\CC_v$. In other words, the control policy $A$ do not increase in this region. Mathematically speaking, it must holds that
\begin{align}\label{ast:not incressing L U}
    \int_0^T \mathds{1}_{\left\{(t,X^{M^{\Pp},A}_t)\in \CC_v\right\}} \rd (L^x_t + U^x_t)=0,\; \Pp-\textup{a.s.}
\end{align}
in which to be shown to be crucial in the verification theorem for the existence and uniqueness of the optimal control policy. On the other hands, once the inventory reaches $\LB_v$, for the HJB equation \eqref{eq: HJB3} to holds, the lower control must be exerted to keep the inventory within the continuation region. The upper control is taken into account in the same way  whenever $(t,R^{A})\in \UB_v$. 
For these reasons, we refer to the space $\CC_v,\LB_v$ and $\UB_v$ as a \textit{continuation region}, \textit{lower control barrier} and \textit{upper control barrier}, respectively. Moreover, we realise by construction of $F$ 
 that $v$ is \textit{convex} in $x$ (cf. \citet{Wang2004-js}, Proposition 3), which ensures that $\CC_v\cap\LB_v\cap\UB_v=\varnothing$. In other words, at most one decision is made, either to do nothing (when $(t,x)\in\CC_v$), or else to take either upper or lower controls.

However, the earlier claims are invalid if the value function is non-smooth, in which case there is no classical solution to \eqref{eq: HJB3}. We show in the next section that only continuity is required to construct the value function as a viscosity solution to the HJB equation.


\section{Viscosity Solution of the Value Function}\label{sec:viscosity}
Given the previous argument, we cannot immediately say that the BSDE $(J^{A^\ast}, Z)$ given by \eqref{eq: sol1} is the solution the fully nonlinear PDE \eqref{eq: HJB3} (at least in a classical sense) since $J^{A^\ast}$ is only known to be continuous, but not necessary smooth, opposing to the assumption of $v$.  To deal with is issue, we employ the notion of viscosity solution introduced by \citet{crandall1983viscosity}, in which they generalise the solution of fully nonlinear PDEs to equip in a weaker sense, where the only locally boundedness is assumed. The development of a viscosity solution to the second order PDEs thanks to \citet{crandall1987uniqueness}, and our following argument will rely on this paper. In other words, we show that $J^{A^\ast}$ is a viscosity solution to \eqref{eq: HJB3}. Before doing so, let us first state the definition of viscosity solution.
\begin{definition}[Viscosity Solution]
    For all $v(t,r)\in C([0,T]\times \sr^3)$ and $\phi\in C^{1,2}([0,T]\times \sr^3)$, define the operator $\mathbb{L}$, for any $(t,r)\in[0,T]\times \sr^3$, by 
    \begin{equation}\label{eq:viscosity}
        \mathbb{L}\phi(t,r) \triangleq \cL\phi(t,r) + F\left(t,r,\phi(t,r),\sigma(x) \diff{\phi(t,r)}{x} + s \diff{\phi(t,r)}{m}\right).
    \end{equation}
    \begin{enumerate}[{1.)}]
        \item The function $v$ is a viscosity subsolution of~\eqref{eq: HJB3} if for all local maximum points $(t',r')\in[0,T]\times \sr^3$ of $v-\phi$ it holds that
        \begin{align}
            \min\Bigg\{ \mathbb{L}\phi(t',r'),\;\; \ell + \diff{\phi(t',r')}{x},\;\; u - \diff{\phi(t',r')}{x}  \Bigg\}\geq 0, \hspace{1cm} \phi(T,r')=h(x'). \label{eq: subsolution}
        \end{align}
        
        \item The function $v$ is a viscosity supersolution of~\eqref{eq: HJB3} if for all local minimum points $(t',r')\in[0,T]\times \sr^3$ of $v-\phi$ it holds that
        \begin{align}
            \min\Bigg\{ \mathbb{L}\phi(t',r'),\;\; \ell + \diff{\phi(t',r')}{x},\;\; u - \diff{\phi(t',r')}{x}  \Bigg\}\leq 0, \hspace{1cm} \phi(T,r')=h(x'), \label{eq: supersolution}
        \end{align}
        
        \item $v(t,r)\in C([0,T]\times \sr^3)$ is a viscosity solution of~\eqref{eq: HJB3} if it both viscosity subsolution and supersolution.
    \end{enumerate}
\end{definition}

\begin{remark}[]
    Given the possible non-smoothness of $u$, the construction of inaction or action regions, as outlined in definitions~\eqref{def: continuation region} to \eqref{def: upper stopped region}, becomes impractical. This also means that the condition~\eqref{ast:not incressing L U} is invalid. Consequently, it is imperative to revise~\eqref{def: continuation region} to \eqref{def: upper stopped region} and strengthen assumption~\eqref{ast:not incressing L U} to accommodate similar considerations for the non-smoothness of $u$. These adjustments will be shown in the verification theorem, highlighting their necessity for establishing the existence and uniqueness of the viscosity solution. 
\end{remark}

\begin{definition}
    For given $v\in C[0,T]\times \sr^3$ and $h>0$,
    \begin{align}
    \LB_v^h&\triangleq\left\{(t,x)\in [0,T]\times\sr: -\ell h\geq v(t,x,m,s)-v(t,x-h,m,s)\right\}\label{def: refined lower stopped region}\\
    \UB_v^h&\triangleq\left\{(t,x)\in [0,T]\times\sr: \phantom{-}uh\leq v(t,x+h,m,s)-v(t,x,m,s)\right\}\label{def: refined upper stopped region}\;\;\;\text{and,}\\
    \CC_{ v}^{h}&\triangleq[0,T]\times\sr\setminus (\LB_{v}^{h}\cup\UB_{v}^{h}),\label{def: refined continuation region}
\end{align}
\end{definition}

\begin{assumption}\label{ast:refined not incressing L U}
    Given  $v\in C[0,T]\times \sr^3$ and. For any feasible policy $A\in \cD$, there exists $h>0$ such that
    \begin{align}
        \int_0^T\mathds{1}_{\{(t,X^{M^{\Pp},A}_t)\in \CC_v^h\}} \rd (L^x_t + U^x_t) = 0,\;\Pp-\textup{a.s.}
    \end{align}
\end{assumption}

Now, we show that $J^{A^\ast}$ is indeed a viscosity solution of \eqref{eq: HJB3}.
\begin{theorem}\label{thm: viscosity 1}
    The value function defined by  \eqref{def: value function} is the unique continuous viscosity solution to \eqref{eq: HJB3}.
\end{theorem}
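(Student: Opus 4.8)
The plan is to prove Theorem~\ref{thm: viscosity 1} in two parts: first, existence---that $J^{A^\ast}$ (equivalently, the value process $\inf_{A\in\cD}J^A_t$ viewed through its deterministic representation $v(t,r)$) is a viscosity solution of the HJB variational inequality~\eqref{eq: HJB3}; second, uniqueness via a comparison principle for viscosity sub- and supersolutions. For existence I would work directly from the dynamic programming principle established in Lemma~\ref{lem: DPP}. Fix a test function $\phi\in C^{1,2}$ and a point $(t_0,r_0)$ at which $v-\phi$ attains a local maximum with $v(t_0,r_0)=\phi(t_0,r_0)$. Applying It\^o's lemma to $\phi(s,R^{M,A}_s)$ along a controlled trajectory started at $r_0$, inserting into the DPP, dividing by $h$ and letting $h\downarrow 0$ yields, for the ``do nothing on $[t_0,t_0+h]$'' policy, the inequality $\mathbb{L}\phi(t_0,r_0)\ge 0$; choosing instead the policies that push $X$ infinitesimally up or down gives $\ell+\phi_x\ge 0$ and $u-\phi_x\ge 0$ at $(t_0,r_0)$; and the obstacle conditions at $m=\mL,\mU$ come from testing the reflection policies $D^\pm$. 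Together these give the subsolution inequality~\eqref{eq: subsolution}. The supersolution inequality~\eqref{eq: supersolution} is the mirror argument at a local minimum, using that the infimum is (nearly) attained by some $\overline A\in\cD$ as in the proof of Lemma~\ref{lem: DPP}: if all three quantities in the $\min$ were strictly positive, one could construct a strictly improving policy, contradicting optimality. Here I would lean on the FBSDE representation of Theorem~\ref{thm: FBSDEs} and the identifications~\eqref{eq: sol1}, $J^{A^\ast}_t=v(t,r)$, $Z_t=\sigma(x)v_x+sv_m$, to make rigorous that the driver $F$ appearing in the BSDE is exactly the one inside $\mathbb{L}$, so that the BSDE dynamics and the DPP are consistent; continuity of $v$ (hence of $J^{A^\ast}$) is what the FBSDE theory of \citet{Kobylanski2000-zw,Zhang2017-jp} provides, and is all that the viscosity framework requires.

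For uniqueness I would prove a comparison theorem: if $\underline v$ is a viscosity subsolution and $\overline v$ a viscosity supersolution of~\eqref{eq: HJB3} with $\underline v(T,\cdot)\le\overline v(T,\cdot)$ and the Neumann-type boundary behaviour at $m=\mL,\mU$, then $\underline v\le\overline v$ on $[0,T]\times\sr^3$. The standard route is the Crandall--Ishii doubling-of-variables technique (cf. \citet{crandall1987uniqueness}): suppose $\sup(\underline v-\overline v)>0$, penalise with $\tfrac{1}{2\eps}|r-r'|^2$ plus a $C^{1,2}$ perturbation handling the finite horizon and the obstacle terms, apply the Crandall--Ishii lemma to obtain matching sub/superjets, and exploit (i) the Lipschitz and linear-growth bounds~\eqref{eq: suff con a}--\eqref{eq: suff con s} on $\alpha,\sigma$, (ii) convexity of $f$ and the structural form $F(t,r,y,z)=-\rho y+f(x)+\tfrac{\gamma s}{2}z^2$ with $0\le S_t\le s$ bounded, and (iii) the discount factor $-\rho y$ which provides the strict monotonicity in $y$ needed to close the estimate. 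The quadratic-in-$z$ term is controlled because along the relevant trajectories $Z$ lives in $\CH^2_T$ and the exponential-martingale estimate of Lemma~\ref{lem: exp mart} gives the integrability needed to linearise $F$ locally, exactly as in the quadratic-growth BSDE comparison Lemma~\ref{thm: comparison}; in fact I would try to reduce uniqueness to Lemma~\ref{thm: comparison} by noting that any two viscosity solutions both admit the stochastic representation~\eqref{def: robust control 2}, and then invoke the comparison theorem for the associated FBSDEs together with the refined inaction region of Assumption~\ref{ast:refined not incressing L U} to handle the singular-control (obstacle) part.

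The main obstacle is the interaction of three features that individually are routine but jointly are delicate: the \emph{singular} (reflecting-barrier) nature of the control, which makes the PDE a variational inequality rather than an equation; the \emph{quadratic growth} of the driver in $z$, which breaks the usual Lipschitz-based comparison arguments and forces the exponential-transform / a~priori-boundedness machinery; and the \emph{mixed Neumann boundary condition} $v_m=0$ at $m=\mL,\mU$ coming from the obstacle control $D^\pm$ on the belief process. Reconciling these---in particular, verifying that the doubling-of-variables penalisation is compatible with the reflection at $m=\mL,\mU$ and that the $\tfrac{\gamma s}{2}z^2$ term does not destroy the comparison inequality when $\gamma<0$ (ambiguity aversion makes $\psi_h$ convex, which is the ``good'' sign, so I expect the sign of $\gamma s$ to work in our favour, but this needs to be checked carefully against the direction of the inequalities)---is where the real work lies. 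A secondary technical point is justifying the passage $h\downarrow 0$ in the DPP uniformly enough to extract the pointwise viscosity inequalities, for which I would use the càdlàg regularity of $M^\Pp$ and dominated convergence backed by the bounds in Theorem~\ref{thm: FBSDEs}.
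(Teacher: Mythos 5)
Your overall architecture (DPP-based existence plus a separate uniqueness step) matches the paper, and your subsolution derivation is essentially the paper's: both start from Lemma~\ref{lem: DPP}, apply It\^o's lemma to the test function at a maximum point of $v-\phi$, and extract $\mathbb{L}\phi\ge 0$ from the no-intervention policy. You diverge in how the gradient constraints and the supersolution property are obtained. For $\ell+\phi_x\ge 0$ and $u-\phi_x\ge 0$ you test with infinitesimal push policies; the paper instead argues by contradiction through the discretised regions $\LB^h_v,\UB^h_v,\CC^h_v$ and Assumption~\ref{ast:refined not incressing L U} (if a constraint failed, the point would lie in $\CC^h_v$, forcing $A^\ast=0$ there and contradicting the case in which control is active). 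Your variant is standard and arguably cleaner, but be aware that the paper needs that assumption precisely because it works with the optimal policy rather than with perturbing policies. The thinnest point of your sketch is the supersolution step: ``construct a strictly improving policy'' is not how the paper closes it, and with a driver of quadratic growth in $z$ this is not routine. The paper's argument assumes all three terms in the minimum are bounded below by some $C'>0$ up to a stopping time, sets $\check J=\phi(\cdot,R^{A^\ast})$ and $\check Z=\sigma\phi_x+S\phi_m$, linearises $F$ through difference quotients $\delta^J,\delta^Z$, and uses the exponential-martingale Lemma~\ref{lem: exp mart} (applicable since $\delta^Z\in\CH^2_T$) to convert the resulting pathwise inequality into a strict expectation inequality contradicting $\Delta J_{\tau(t')}\ge\Delta J_{t'}=0$. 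You would need to reproduce this linearisation/exponential-transform machinery (which you mention only in your uniqueness discussion) to make your ``mirror argument'' rigorous; the near-optimal policy $\overline A$ from Lemma~\ref{lem: DPP} alone does not control the quadratic term.

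For uniqueness the routes genuinely differ. You propose a Crandall--Ishii doubling-of-variables comparison principle, or alternatively a reduction to the BSDE comparison result (Lemma~\ref{thm: comparison}); the paper argues much more briefly by splitting the state space: on the action regions the variational inequality reduces to first-order equations (uniqueness via \citet{crandall1983viscosity}), and on the continuation region uniqueness for the quadratic-gradient PDE is taken from \citet{Kobylanski2000-zw}. Your route, if carried through, would yield a more standard and self-contained comparison statement than the paper's region-wise argument, but it is substantially heavier: the quadratic $z$-term and the Neumann conditions at $m=\mL,\mU$ make the doubling estimate delicate, exactly as you flag, and none of that work is actually done in the paper.
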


\noindent\begin{proof}[Proof of Theorem \ref{thm: viscosity 1}]\phantom{==}\\
    \textbf{Uniqueness: } First we claim that there exists a viscosity solution to \eqref{eq: HJB3}, which will be proved in the subsequent section. To this end, we suppose that $v$ is a viscosity solution to a smooth function $\phi\in C^{1,2}([0,T]\times \sr^3)$. Given $h>0$, it is straightforward from \citet{crandall1983viscosity} that $v$ admits a unique viscosity solution on the regions $\LB^h_v$ and $\UB^h_v$ since it is just a first order PDE.  Because $v$ is a viscosity solution to a PDE with quadratic in gradients on $\CC^h_v$, we have that there is also a unique viscosity solution on this region according to the argument given by \citet{Kobylanski2000-zw}. By smoothness of $\phi$ we conclude that $v$ has a unique viscosity solution.
    
    \textbf{Existence: }For what follows we adopt the idea from \citet{Zhang2017-jp}, which establishes the verification theorem for the non-controlled BSDE with a regular driver i.e. $f$ is Lipchitz in $Z$. Then we develop the argument to equip the theorem with singular controls and a driver with quadratic growth in $Z$. 
    
    Suppose that $\phi\in C^{1,2}([0,T]\times \sr^3)$. It is obvious from the definition that at $T$, $v(T,r')=h(x')=\phi(T,r')$. Thus, it is left to show the viscosity property of $v$ on $[0,T)\times \sr^3$. First, we prove that $v$ is a viscosity subsolution before proceeding to verify its supersolution property.

    \textbf{Subsolution:} we suppose $v-\phi$ has a maximum point at $(t',r')$. In other word, $(v-\phi)(t',r')=0$ and  $(v-\phi)(t',r')\geq (v-\phi)(t,r)$ for all $(t,r)\in [0,T)\times \sr^3$. Then we show that \eqref{eq: subsolution} holds. Suppose that $v(t,r)\triangleq \inf_{A\in \cD} J^{A}_{t}\triangleq J^{A^\ast}_{t}$. By DPP (Lemma \ref{lem: DPP}), it gives
    \begin{align}\label{eq: DPP v t'}
        v(t',r') = \Ep^{\Pp}_{\cG_{t'}}\left[ \int_{t'}^{t'+h}   \left( F\left(t,R_s^{A^\ast},v(s,R^{A^\ast}_{s}),Z_s\right)\rd s +\ell \rd A^{\ast-}_s + u \rd A^{\ast+}_s \right ) + v(t'+h,R^{A^\ast}_{t'+h})\right]
    \end{align}
    for $h>0$ such that $t'\leq t'+h \leq T.$ By It\={o}'s lemma, $\phi(t',r')$ solves
    \begin{align}
        \phi(t',r')=  \Ep^{\Pp}_{\cG_{t'}} \Bigg[-\int_{t'}^{t'+h}   \Bigg( \cL \phi(s,R^{A^\ast}_s) \rd s &+ \diff{\phi}{x}(s,R^{A^\ast}_s)\rd A^\ast_s +\diff{\phi}{m}(s,R^{A^\ast}_s)\rd D_s\Bigg) + \phi(t'+h,R^{A^\ast}_{t'+h})
        \Bigg]\label{eq: DPP phi t'}
    \end{align}
    Because $v-\phi$ has a maximum point at $(t',r')$, \eqref{eq: DPP v t'} and \eqref{eq: DPP phi t'} implies that
    \begin{align}
        0\leq  \Ep^{\Pp}_{\cG_{t'}}\Bigg[\int_{t'}^{t'+h}   \Bigg( &\left(\cL \phi(s,R^{A^\ast}_s) +  F\left(t,R_s^{A^\ast},v(s,R^{A^\ast}_{s}),Z_s\right) \right)\rd s \nonumber\\ &+ \left(\ell + \diff{\phi}{x}(s,R^{A^\ast}_s) 
        \right)\rd A^{\ast-}_s + \left(u - \diff{\phi}{x}(s,R^{A^\ast}_s) 
        \right)\rd A^{\ast+}_s +\diff{\phi}{m}(s,R^{A^\ast}_s)\rd D_s\Bigg)\Bigg]
     \end{align}
    Now we show that \eqref{eq: subsolution} holds by splitting the proof into two cases: when (i) for each $\omega_t\in\Omega$ such that $A^\ast(\omega_t)=(0,0)$ and (ii) such that $A^\ast(\omega_t)\neq(0,0)$, for $t\in[t',t'+h]$.
    
    \textbf{(i): }First of all, we choose $Z\triangleq\sigma(x)\diff{\phi}{x}+s\diff{\phi}{m}$. The smoothness of $\phi$ gives that $Z\in \CH^2_T$, implying that $v$ still admits a unique solution according to Theorem~\ref{thm: comparison}. Since the first case means there is no intervention on $[t',t'+h]$ and is one of the feasible policies as follows \eqref{cond:never intervene}, the integrals with respect to  $A^\ast$ vanish. Together with the MVT, these imply that $\mathbb{L}\phi(t',r')\geq 0$.

    \textbf{(ii): }Given that $A^\ast\neq(0,0)$, $\Pp-$a.s. on $[t',t'+h]$. We show that if $\ell + \diff{\phi}{x}(t',r')<0$ or $u - \diff{\phi}{x}(t',r')<0$ then they lead to some contradiction. By the fundamental calculus, we know that there exists $h>0$ such that 
    \begin{align*}
        \diff{\phi}{x}(t',r')\cdot h=\phi(t',x',m',s') - \phi(t',x'-h,m',s')
        =\phi(t',x'+h,m',s') - \phi(t',x',m',s')
    \end{align*}
    Since $v-\phi$ has a maximum point at $(t',r')$, it is easy to check that
    \begin{align*}
        h\ell +v(t',x',m',s') - v(t',x'-h,m',s')&\geq h\ell+ \phi(t',x',m',s') - \phi(t',x'-h,m',s')>0, \textup{ or}\\
        hu- v(t',x'+h,m',s') + v(t',x',m',s')&\geq hu-\phi(t',x'+h,m',s') + \phi(t',x',m',s')>0.
    \end{align*}
    This means that $(t', x') \notin \LB^h_{v}\cup\UB^h_{v}$. Since $v$ is convex in $x$, we arrive with a conclusion that   $(t', r') \in \CC^h_{v}$. By Assumption~\ref{ast:refined not incressing L U}, we then have $A^\ast=(0,0)$, $\Pp-$a.s. which is a contradiction. Therefore, this implies that $\ell+\diff{\phi}{x}(t',r')\geq0$ and $u - \diff{\phi}{x}(t',r')\geq 0$.
    
    All in all, we conclude that \eqref{eq: subsolution} holds and therefore $v$ is a viscosity subsolution.
    
    \textbf{Supersolution:} Now we show that $v$ is a viscosity supersolution. To this end, we suppose $v-\phi$ has a minimum point at $(t',r')$. In other word, it holds that $(v-\phi)(t',r')=0$ and  $(v-\phi)(t',r')\leq (v-\phi)(t,r)$ for all $(t,r)\in [0,T)\times \sr^3$. From now on, we proceed the argument by contradiction, which means in opposite to \eqref{eq: supersolution} that
    \begin{align}
        \mathbb{L}\phi(t',r')>0,\;\;\;
        \ell + \diff{\phi(t',r')}{x} >0 \;\;\;\textup{and}\;\;\;
        u - \diff{\phi(t',r')}{x} >0
    \end{align}
    At this point we may assume that there exist $C_c,C_{\ell},C_u>0$ such that
    \begin{align}
        \mathbb{L}\phi(t',r')=C_c,\;\;\;
        \ell + \diff{\phi(t',r')}{x} =C_{\ell} \;\;\;\textup{and}\;\;\;
        u - \diff{\phi(t',r')}{x} =C_u.\label{eq: sub contra}
    \end{align}
    Let $C'=\frac{C_c\wedge C_{\ell} \wedge C_u}{3}$, then by smoothness of $\phi$, the following the stopping time is well defined:
    \begin{align}\label{def: tau}
        \tau(t')\triangleq T \wedge\inf\Bigg\{\tau\geq t': &\Bigg(\mathbb{L}\phi(\tau,R_{\tau}^{A^\ast})\Bigg) \wedge  \left( \ell + \diff{\phi(\tau,R_{\tau}^{A^\ast})}{x} \right)\wedge\nonumber\\&\left( u - \diff{\phi(\tau,R_{\tau}^{A^\ast})}{x} \right) \wedge \left(\diff{\phi(\tau,R_{\tau}^{A^\ast})}{m} \right) \wedge \left(- \diff{\phi(\tau,R_{\tau}^{A^\ast})}{m}\right)\leq C'\Bigg\},
    \end{align}
     Thus, for $t\in[t', \tau(t')]$ we have by the DPP (Lemma \ref{lem: DPP}) that
    \begin{align}\label{eq: DPP2}
        J^{A^\ast}_{t'} =  \int_{t'}^{\tau(t')}   \left[ F\left(t,R_s^{A^\ast},J_s^{A^\ast},Z_s\right)\rd s +\ell \rd A^{\ast-}_s + u \rd A^{\ast+}_s - Z_s \rd \overline B_s \right] + J^{A^\ast}_{\tau(t')}
    \end{align}
    For each $t'\leq t \leq \tau(t')$. Now if we define by 
    \begin{align}
        \check J^{A^\ast}_{t'} = \phi(t',R^{A^\ast}_{t'}) \;\;\;\textup{and}\;\;\; \check Z_{t'} =  \sigma( X^{A^\ast}_{t'})  \diff{\phi(t,R^{A^\ast}_{t'})}{x} + S_{t'} \diff{\phi(t',R^{A^\ast}_{t'})}{m},
    \end{align}
    it then follows from the It\={o}'s lemma and \eqref{def: tau} that
    \begin{align}\label{eq: DPP4}
        \check J^{A^\ast}_{t'} &= \phi(\tau(t'),R^{A^\ast}_{\tau(t')}) - \int_{t'}^{\tau(t')}   \Bigg[ \cL \phi(s,R^{A^\ast}_s) \rd s + \diff{\phi}{x}(s,R^{A^\ast}_s)\rd A^\ast_s  \nonumber\\& \hspace{6.5cm} + \diff{\phi}{m}(s,R^{A^\ast}_s)\rd D_s + \check Z_s \rd \overline B_s \Bigg]\nonumber\\
        &\leq \phi(\tau(t'),R^{A^\ast}_{\tau(t')}) - \int_{t'}^{\tau(t')}   \bigg[ \left(C'-F\left(s,{R}_s^{A^\ast},\check{J}_s^{A^\ast},\check Z_s\right)\right)\rd s +  \check Z_s \rd \overline B_s\nonumber\\ &\hspace{5cm} + (C'-\ell)\rd A^{\ast-}_s + (C'-u)\rd A^{\ast+}_s  + C'(\rd D^-_s +\rd D^+_s)\bigg].
    \end{align}
    Since $(v-\phi)(t',r')=0$, it follows that the terms $\Delta J_t \triangleq J^{A^\ast}_t - \check J^{A^\ast}_t$ and $\Delta  Z_t \triangleq  Z_t - \check Z_t$ satisfy
    \begin{align*}
        -\Delta J_{\tau(t')} &\geq \int_{t'}^{\tau(t')}\Bigg[ \left(C'  + F(s,{R}_s^{A^\ast},{J}_s^{A^\ast}, Z_s)  -F(s,{R}_s^{A^\ast},\check{J}_s^{A^\ast},\check Z_s) \right)\rd s \\ &\hspace{2cm}+ C'(\rd A^{\ast-}_s+\rd A^{\ast+}_s + \rd L^{m}_s+\rd U^{m}_s) +  \left( \check Z_s -   Z_s \right)\rd \overline B_s  \Bigg]\\
        &\geq  \int_{t'}^{\tau(t')}\Bigg[ C'(\rd s + \rd A^{\ast-}_s + \rd A^{\ast+}_s + \rd L^{m}_s+\rd U^{m}_s) + 
        \left( \delta_s^J \Delta {J}_s + \delta_s^Z \Delta  Z_s  \right) \rd s -\Delta Z_s\rd \overline B_s\Bigg] \\
        &> \int_{t'}^{\tau(t')}\Bigg[
        \left( \delta_s^J \Delta {J}_s + \delta_s^Z \Delta  Z_s  \right) \rd s -\Delta Z_s\rd \overline B_s\Bigg],
    \end{align*}
    where
    \begin{align}
        \delta_s^J &\triangleq \frac{F(s, {R}_s^{A^\ast}, {J}_s^{A^\ast},\check Z_s) - F(s, {R}_s^{A^\ast},\check{J}_s^{A^\ast},\check Z_s)}{\Delta {Y}_s}\mathds{1}_{\Delta {J}_s\neq 0}\\
        \delta_s^Z &\triangleq \frac{F(s, {R}_s^{A^\ast}, {J}_s^{A^\ast},  Z_s) - F(s, {R}_s^{A^\ast}, {J}_s^{A^\ast},\check Z_s)}{\Delta {Z}_s}\mathds{1}_{\Delta {Z}_s\neq 0}.
    \end{align}
    The second inequality holds by \eqref{def: tau}. From the facts that $C'>0$ and that $A^\ast$ and $D$ are non-decreasing, we infer the last strict inequality. 
    
    From the comparison theorem one can see that
    \begin{align}\label{eq: delta}
        \delta_s^J \leq C_1 < \infty \;\;\textup{and}\;\;\delta_s^Z \leq C_1(1 + | {Y}_s^{A^\ast}|+|\check{J}_s^{A^\ast}|+| {Z}_s|+|\check{Z}_s|). 
    \end{align}
    Now we define the processes $(\Gamma^J,\Gamma^Z)\triangleq(\Gamma^J_t,\Gamma^Z_t)_{t\geq 0}$ by
    \begin{align}
        \rd \Gamma^J_t = \delta_t^y \Gamma^J_t \rd t,\;\;\Gamma_0^y=1,\;\;\textup{and}\;\;\rd \Gamma^Z_t = \delta_t^z \Gamma^Z_t \rd \overline B_t,\;\;\Gamma_0^z=1,
    \end{align}
    respectively. By It\={o}'s integration by parts, we have
    \begin{align*}
        \rd \Gamma^J_t \Delta J^{A}_t &=  \Gamma^J_t\rd \Delta J^{A}_t + \delta^y_t \Delta J^{A}_t \Gamma^J_t \rd t,
    \end{align*}
    and therefore
    \begin{align}\label{eq: gamma inequality}
        \rd \Gamma^Z_t\Gamma^J_t\Delta J^{A}_t &= \Gamma_t^z(\Gamma^J_t\rd \Delta J^{A}_t + \delta^y_t \Delta J^{A}_t \Gamma^J_t \rd t) +\delta_t^z \Gamma^Z_t\Gamma^J_t \Delta J^{A}_t \rd \overline B_t + \delta_t^z\Gamma_t^y\Gamma_t^z\Delta Z_t \rd t \nonumber\\
        &<   \Gamma_t^z(\Gamma^J_t
         [-\left(\delta_t^y \Delta {Y}_t+ \delta_t^z \Delta  Z_t  \right) \rd t  +\Delta Z_t\rd \overline B_t] + \delta^y_t \Delta J^{A}_t \Gamma^J_t \rd t)\nonumber \\& \hspace{6cm}+\delta_t^z \Gamma^Z_t\Gamma^J_t \Delta J^{A}_t \rd \overline B_t+ \delta_t^z\Gamma_t^y\Gamma_t^z\Delta Z_t \rd t\nonumber\\
        &=\Gamma^Z_t\Gamma^J_t(\Delta Z_t + \delta^z_t\Delta J^{A}_t)\rd \overline B_t.
    \end{align}
    From \eqref{eq: delta}, it is clear that $\delta^Z\in \CH^2_T$. Thus, we immediately obtain by Lemma \ref{lem: exp mart} that $\Gamma^Z$ is a uniformly integrable martingale which therefore induces that
    \begin{align}
        \Ep^{\Pp}_{\cG_{t'}}\left[ \int_{t'}^{\tau(t')} \Gamma^Z_s\Gamma^J_s(\Delta Z_s + \delta^z_s\Delta J_s)\rd \overline B_s\right] = 0.
    \end{align}
    This implies from \eqref{eq: gamma inequality} that
    \begin{align*}
        \Ep^{\Pp}_{\cG_{t'}}\left[ \Gamma^Z_{\tau(t')}\Gamma^J_{\tau(t')}\Delta J_{\tau(t')} -  \Gamma^Z_{t'}\Gamma^J_{t'}\Delta J_{t'}\right] <0
    \end{align*}
    However, since $v-\phi$ attains its minimum at $(t',r')$, it follows that $\Delta J_{\tau(t')}\geq\Delta J_{t'} = 0$. Therefore,
    \begin{align*}
        \Ep^{\Pp}_{\cG_{t'}}\left[ \Gamma^Z_{\tau(t')}\Gamma^J_{\tau(t')}\Delta J_{\tau(t')} -  \Gamma^Z_{t'}\Gamma^J_{t'}\Delta J_{t'}\right] \geq 0,
    \end{align*}
    which is a contradiction. As a result, the value function defined by  \eqref{def: value function} is a continuous viscosity supersolution to \eqref{eq: HJB3}.
    
    Hence, we conclude that $v$ is a viscosity solution to \eqref{eq: HJB3}.
\end{proof}

\section{Coordination transform and free boundaries characterisation}\label{sec:coor tran}

To solve the HJB equation \eqref{eq: HJB3}, we need to know the initial condition (IC) and the free boundary conditions (BC) for $(X^{M^{\Pp},A},M^{\Pp},S)$. It is well established that the BCs for $X^{M^{\Pp},A}$ are implemented using Neumann free BCs \eqref{eq: lower barrier PDE} and \eqref{eq: upper barrier PDE}, while the BCs for $M^{\Pp}$ impose Neumann fixed BCs because the triggers are predetermined. By combining this with a deterministic solution of $S$ to a Ricati equation, a verification theorem can be easily constructed, as shown, for example, by \citet{kushner2001numerical}. Once we have the solution, we can use it to intervene in the dynamics of $(X^{M^{\Pp},A},M^{\Pp})$ in accordance with the optimal control policy.

However, even with a solution to the HJB equation \eqref{eq: HJB3} (whether obtained analytically or numerically), it cannot be directly applied in practice. This is because we can only observe $X^{M^{\Pp}}$ and not $M^{\Pp}$. Therefore, we use a technique called \emph{coordinate transformation} from \cite{johnson2017quickest} to reduce a diffusion dimension from $M^{\Pp}$ so that only $X^{M^{\Pp}}$ exhibits stochastic evolution. The main idea is to apply a suitable transformation that converts the FBSDE system \eqref{eq: X^A_t}-\eqref{eq: J^A_t} into a different coordinate system where the (transformed) unobservable process is independent of the innovation process. We demonstrate that after this transformation, it follows a deterministic evolution dependent on both bounded variation processes $A$ and $(L^m,U^m)$. This allows us to impose the BCs in the new system in the classical sense of singular control. To do this, we define a transformation $\mathcal{T}:\sr^2\times\sr_+\to\sr^2\times\sr_+$ by 
\begin{align}\label{def: transform}
    \mathcal{T}(x,m,s)=(x,x-\frac{\sigma}{s}m,s)
\end{align}
where the invertible is given by 
\begin{align}\label{def: inverse transform}
    \mathcal{T}^{-1}(x,m,s)=(x,\frac{s}{\sigma}(x-m),s)
\end{align}
Taking $X_2\triangleq X_2(x,m,s)=x-\frac{\sigma}{s}m$, then it follows by It\={o}'s lemma that the new system of the forward SDEs $( X_1,X_2,S)\triangleq(X_{1,t},X_2(X_{1,t},M^{\Pp}_t,S_t),S_t)$ solve
\begin{align}
     &\rd  X_{1,t} &&= \bigg(\alpha( X_{1,t}) + S_t( X_{2,t} - X_{1,t} )\bigg)\rd t + \rd A_t+\sigma( X_{1,t})\rd \overline B_t,&& X_{1,0}=x, \label{eq: X1}\\
    &\rd X_{2,t} &&= \bigg(\alpha( X_{1,t})+ 2S_t( X_{2,t} - X_{1,t} )\bigg) \rd t  + \rd A_t -\frac{\sigma(X_{1,t})}{S_t}\rd D_t, && X_{2,0}=x-\frac{\sigma}{s}m, \label{eq: X2}\\
    &\rd S_t &&= -S^2_t \rd t, && S_0=s. \label{eq: S_t 2}
    \end{align}


The inverse transformed version of the value function \eqref{eq: DPP v} is defined by the mapping $\vt:[0,T]\times\sr^2\times\sr_+\to[0,T]\times\sr^2\times\sr_+$ where
\begin{align}\label{def: tran val fn}
    \vt (t,\hat x) \triangleq  v (t,\mathcal{T}^{-1}(\hat x)) = v (t,x,m,s),
\end{align}
given $\hat x=(x_1,x_2,s).$ By this transformation, $D$ now only increases when $\frac{S_t}{\sigma(X_{1,t})}(X_{1,t} - X_{2,t})= \mL$ or $\mU$, respectively. 

We observe that $X_1 = X^{ M^{\Pp},A}$, which remains consistent with the pre-transformed process. In contrast, the process $X_2$ offers an entirely new interpretation. While $X_2$ resembles $X_1$, it notably lacks exposure to risk. This transformation yields a practical advantage, as it enables us to elicit $M^{\Pp}$ from the environment by observing $X_2$, which is itself dynamically inferred from $X_1$. For this reason, we refer to $X_2$ as an \textit{auxiliary controlled inventory process}. Furthermore, from~\eqref{eq: X1} and~\eqref{eq: X2}, we obtain 
\begin{align}\label{eq: X1-X2} 
    \rd (X_{1,t}-X_{2,t}) = S_t(X_{1,t}-X_{2,t})\rd t +\sigma( X_{1,t})\rd \overline B_t+\frac{\sigma(X_{1,u})}{S_t}\rd D_t, 
\end{align} 
which implies that, in expectation over the long run, the increment of $X_1$ converges to that of $X_2$. This follows from the fact that $S$ is a strictly decreasing function of time and bounded within $(0,s]$, while $X_1$ and $X_2$ are themselves bounded by the processes $A$ and $D$, respectively. Therefore, the DM may eventually expect to recover full information about the controlled inventory process $X_1$ through sufficiently long observation.

Since by construction, $v(t,r)=\vt(t,\hat x)$, it implies that a transformed HJB equation (to be shown later) solved by $\vt(t,\mathcal{T}(\hat x))$, gives a new optimal control policy that is equivalent to that of $v(t,r)$. To see this, one can check by the inverse transformation that
\begin{align}
    \diff{ v (t,x_1,\frac{s}{\sigma}(x_1-x_2),s)}{x} &=\diff{\vt (t,\hat x)}{x_1} + \diff{\vt (t,\hat x)}{x_2} ,\\
    \diff{ v (t,x_1,\frac{s}{\sigma}(x_1-x_2),s)}{m} &=-\frac{\sigma(x_1)}{s} \diff{\vt (t,\hat x)}{x_2} ,\\
    \frac{\partial^2v(t,x_1,\frac{s}{\sigma}(x_1-x_2),s)}{\partial x^2} &=\frac{\partial^2\vt(t,\hat x)}{\partial x_1^2} + \frac{\partial^2\vt(t,\hat x)}{\partial x_2^2} + 2\frac{\partial^2\vt(t,\hat x)}{\partial x_1 \partial x_2},\\
    \cL v(t,x_1,\frac{s}{\sigma}(x_1-x_2),s) &= \diff{\vt(t,\hat x)}{t} + \bigg(\alpha(x_1) + s(x_2-x_1)\bigg) \diff{\vt(t,\hat x)}{x_1}     \nonumber\\&\hspace{0.5cm} + \bigg(\alpha(x_1) + 2s(x_2-x_1)\bigg)\diff{\vt(t,\hat x)}{x_2}-s^2\diff{\vt(t,\hat x)}{s}+\frac{\sigma^2(x)}{2}\frac{\partial^2\vt(t,\hat x)}{\partial x_1^2}\nonumber\\&\triangleq \cL \vt(t,\hat x).
\end{align}
By plugging these terms into \eqref{eq: HJB3}, we then obtained so-called a transformed HJB equation:
\begin{align}\label{eq: HJB4}
    0&=\min\Bigg\{\cL \vt(t,\hat x) + F\bigg(t,\hat x,\vt(t,\hat x),\sigma(x_1) \diff{\vt(t,\hat x)}{x_1}  \bigg),\nonumber\\&\hspace{3cm}\;\; \ell + \diff{\vt (t,\hat x)}{x_1} + \diff{\vt (t,\hat x)}{x_2},\;\; u - \diff{\vt (t,\hat x)}{x_1} - \diff{\vt (t,\hat x)}{x_2}  \Bigg\}\\
    0&= \diff{\vt (t,x_1,x_1 - \frac{\sigma(x_1)}{s} \mL )}{x_2} = \diff{\vt (t,x_1,x_1 - \frac{\sigma(x_1)}{s} \mU )}{x_2}, 
    \\ \vt(T,\hat x)&=\xi(x_1),
\end{align}
claimed earlier. In consequence, the transformed optimal control policy represented by form of inaction and action regions can be defined by $\CC_{\hat v},\LB_{\hat v},\UB_{\hat v}\subset[0,T]\times\sr$ such that
\begin{align}
    \CC_{\hat v}&\triangleq\left\{(t,x_1,x_2)\in [0,T]\times\sr: -\ell<\diff{\vt (t,\hat x)}{x_1} + \diff{\vt (t,\hat x)}{x_2}<u\right\},\label{def: continuation region 2}\\
    \LB_{\hat v}&\triangleq\left\{(t,x_1,x_2)\in [0,T]\times\sr: -\ell\geq\diff{\vt (t,\hat x)}{x_1} + \diff{\vt (t,\hat x)}{x_2}\right\}\;\;\;\text{and,}\label{def: lower stopped region 2}\\
    \UB_{\hat v}&\triangleq\left\{(t,x_1,x_2)\in [0,T]\times\sr: \qquad\;\;\diff{\vt (t,\hat x)}{x_1} + \diff{\vt (t,\hat x)}{x_2}\geq u\right\},\label{def: upper stopped region 2}
\end{align}
which basically become a two dimensional singular controls problem. As a result, a solution the HJB equation \eqref{eq: HJB4} is equivalent to that of solving
\begin{align}
    \cL \vt(t,\hat x) + F\bigg(t,\hat x,v(t,\hat x),\sigma(x_1) \diff{\vt(t,\hat x)}{x_1}\bigg)&=0\;\;\;(t,x_1,x_2)\in\CC_{\hat v} \label{eq: continue PDE 2}\\
    \ell +\diff{\vt (t,\hat x)}{x_1} + \diff{\vt (t,\hat x)}{x_2}&=0\;\;\;(t,x_1,x_2)\in\LB_{\hat v}\label{eq: lower barrier PDE 2}\\
    u - \diff{\vt (t,\hat x)}{x_1} - \diff{\vt (t,\hat x)}{x_2}&=0\;\;\;(t,x_1,x_2)\in\UB_{\hat v}.\label{eq: upper barrier PDE 2}
\end{align}
Now we modify Assumption~\ref{ast:refined not incressing L U} to define an optimal control policy for the transformed value function \eqref{def: tran val fn} and show that it is a viscosity solution to \eqref{eq: HJB4}.

\begin{definition}
    For given $\hat v\in C[0,T]\times \sr^3$ and $h_1,h_2>0$,
    \begin{align}
    \LB_{\hat v}^{h_1,h_2}&\triangleq\{(t,x_1,x_2)\in [0,T]\times\sr^2: -\ell h_1h_2\geq \phantom{-}(h_1+h_2)\hat v(t,x,m,s)-h_2\hat v(t,x_1-h_1,x_2,s)\nonumber\\&\hspace{9.cm}-h_1\hat v(t,x_1,x_2-h_2,s)\}\label{def: tran refined lower stopped region}\\
    \UB_{\hat v}^{h_1,h_2}&\triangleq\{(t,x_1,x_2)\in [0,T]\times\sr^2: \phantom{-}uh_1h_2\geq -(h_1+h_2)\hat v(t,x,m,s)+h_2\hat v(t,x_1+h_1,x_2,s)\nonumber\\&\hspace{9.cm}+h_1\hat v(t,x_1,x_2+h_2,s)\},\label{def: tran refined upper stopped region}\;\;\;\text{and,}\\
    \CC_{\hat v}^{h_1,h_2}&\triangleq[0,T]\times\sr^2\setminus (\LB_{\hat v}^{h_1,h_2}\cup\UB_{\hat v}^{h_1,h_2}).\label{def: tran refined continuation region}
\end{align}
\end{definition}

\begin{assumption}\label{ast: tran refined not incressing L U}
    Given  $v\in C[0,T]\times \sr^3$ and. For any feasible policy $A\in \cD$, there exists $h_1,h_2>0$ such that
    \begin{align}
        \int_0^T\mathds{1}_{\{(t,X^{M^{\Pp},A}_t)\in \CC_{\hat v}^{h_1,h_2}\}} \rd A_t = 0,\;\Pp\textup{--a.s.}
    \end{align}
\end{assumption}

\begin{theorem}\label{thm: viscosity 2}
    The value function defined by  \eqref{def: tran val fn} is a unique continuous viscosity solution to \eqref{eq: HJB4}.
\end{theorem}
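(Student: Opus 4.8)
The plan is to obtain Theorem \ref{thm: viscosity 2} as a corollary of Theorem \ref{thm: viscosity 1}, by transporting the viscosity property through the coordinate change $\mathcal{T}$ of \eqref{def: transform}. The crucial observation is that $\mathcal{T}$ is a bi-Lipschitz diffeomorphism of $\sr^2\times\sr_+$ with smooth inverse \eqref{def: inverse transform} (of class $C^\infty$ in the arithmetic-Brownian-motion model of Section \ref{sec:comstat}, where $\sigma$ is constant), so that for any $\phi\in C^{1,2}([0,T]\times\sr^3)$ one has $\phi\circ\mathcal{T}\in C^{1,2}$, and, since $v=\vt\circ\mathcal{T}$ by \eqref{def: tran val fn}, the point $(t',r')$ is a local maximum (resp.\ minimum) of $v-\phi$ if and only if $(t',\mathcal{T}(r'))$ is a local maximum (resp.\ minimum) of $\vt-(\phi\circ\mathcal{T}^{-1})$. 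Hence the family of admissible test functions for $v$ at $(t',r')$ is in bijection with the family of admissible test functions for $\vt$ at the transformed point, and it remains only to verify that the differential inequality defining a sub/supersolution is preserved term by term.

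This last check is essentially the block of chain-rule identities preceding \eqref{eq: HJB4}: one has $\partial_x v=\partial_{x_1}\vt+\partial_{x_2}\vt$, $\partial_m v=-\tfrac{\sigma(x_1)}{s}\partial_{x_2}\vt$ and $\cL v=\cL\vt$, whence $\sigma(x)\partial_x v+s\,\partial_m v=\sigma(x_1)\partial_{x_1}\vt$, the gradient constraints $\ell+\partial_x v$ and $u-\partial_x v$ turn into $\ell+\partial_{x_1}\vt+\partial_{x_2}\vt$ and $u-\partial_{x_1}\vt-\partial_{x_2}\vt$, the Neumann conditions $\partial_m v=0$ at $m=\mL,\mU$ turn into $\partial_{x_2}\vt=0$ on the boundaries $x_2=x_1-\tfrac{\sigma(x_1)}{s}\mL$ and $x_2=x_1-\tfrac{\sigma(x_1)}{s}\mU$, and the terminal datum $v(T,r)=\xi(x)$ becomes $\vt(T,\hat x)=\xi(x_1)$; moreover the driver $F$ is literally the same function evaluated at the transformed arguments, so it stays proper. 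Consequently the $\min$-expression in the definition of a viscosity sub/supersolution of \eqref{eq: HJB3} at $(t',r')$ coincides entry by entry with that of \eqref{eq: HJB4} at $(t',\mathcal{T}(r'))$, and combining this with the bijection of test functions, $v$ is a viscosity (sub/super)solution of \eqref{eq: HJB3} if and only if $\vt=v\circ\mathcal{T}^{-1}$ is one of \eqref{eq: HJB4}. Existence in Theorem \ref{thm: viscosity 2} then follows from the existence half of Theorem \ref{thm: viscosity 1}; for uniqueness, any continuous viscosity solution $\hat w$ of \eqref{eq: HJB4} pulls back to $\hat w\circ\mathcal{T}$, a continuous viscosity solution of \eqref{eq: HJB3}, which by the uniqueness half of Theorem \ref{thm: viscosity 1} equals $v$, so $\hat w=\vt$.

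The step I expect to require the most care — and the reason Assumption \ref{ast: tran refined not incressing L U} and the two-parameter difference quotients \eqref{def: tran refined lower stopped region}--\eqref{def: tran refined continuation region} are phrased the way they are — is the transfer of the refined inaction-region hypothesis. The contradiction arguments inside the proof of Theorem \ref{thm: viscosity 1} invoke Assumption \ref{ast:refined not incressing L U} through the regions $\LB_v^h,\UB_v^h,\CC_v^h$; since $\mathcal{T}$ shears the $x$-direction into the diagonal $\partial_{x_1}+\partial_{x_2}$, a one-sided increment of $v$ in $x$ becomes, after composition with $\mathcal{T}^{-1}$, precisely the mixed increment of $\vt$ in $(x_1,x_2)$ appearing in \eqref{def: tran refined lower stopped region}--\eqref{def: tran refined upper stopped region}. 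I would therefore verify that, for suitable $h_1,h_2>0$ determined by $h$ and the local slope of $\mathcal{T}$, one has $(t,x)\in\LB_v^h\Rightarrow(t,x_1,x_2)\in\LB_{\vt}^{h_1,h_2}$ and similarly for $\UB$, using the convexity of $\vt$ in $(x_1,x_2)$ inherited from that of $v$ in $x$, so that Assumption \ref{ast: tran refined not incressing L U} is exactly the $\mathcal{T}$-image of Assumption \ref{ast:refined not incressing L U}. Granting this, one may either invoke the pull-back argument above or rerun the verification/comparison argument of Theorem \ref{thm: viscosity 1} verbatim in the $\hat x$-coordinates: $\cL\vt$ is still a proper, degenerate-elliptic operator (now diffusive only in $x_1$), so the first-order uniqueness theory of \citet{crandall1987uniqueness} applies on $\LB_{\vt}^{h_1,h_2}\cup\UB_{\vt}^{h_1,h_2}$ and the quadratic-gradient theory of \citet{Kobylanski2000-zw} on $\CC_{\vt}^{h_1,h_2}$. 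The extra degeneracy in the $x_2$ (and $s$) directions is not a genuinely new obstacle, since \eqref{eq: HJB3} was already degenerate in $s$ and the viscosity machinery used there never required uniform ellipticity.
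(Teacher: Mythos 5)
Your proposal is correct and follows essentially the same route as the paper, whose proof of Theorem \ref{thm: viscosity 2} simply notes that the smoothness of $\mathcal{T}$ allows one to repeat the argument of Theorem \ref{thm: viscosity 1} using Assumption \ref{ast: tran refined not incressing L U}. Your write-up merely makes explicit what the paper leaves implicit (the bijection of test functions under $\mathcal{T}$, the chain-rule identities preceding \eqref{eq: HJB4}, and the transfer of the refined inaction-region hypothesis), so no further changes are needed.
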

\noindent\begin{proof}[Proof of Theorem \ref{thm: viscosity 2}]
Since $\mathcal{T}$ has a smooth transformation, it is straightforward to check that \eqref{def: tran val fn} is a unique continuous viscosity solution to \eqref{eq: HJB4} by utilising Assumption~\ref{ast: tran refined not incressing L U} and repeating the argument given in Theorem~\ref{thm: viscosity 1}. 
\end{proof}

\section{Numerical Implementation}\label{sec:comstat}
In this section, we utilise a numerical method so-called \textit{Markov chain approximation} to solve the HJB equations \eqref{eq: HJB4} of an inventory flow modelled by an arithmetic Brownian diffusion. To neglect the choice of terminal condition, we employ the change of time to turn the HJB equations \eqref{eq: HJB4} into initial value problem. We then perform comparative statics to explore the impact of smooth ambiguity on a singular control.

\subsection{Markov chain approximation}

Thus far, a general analytical solution for solving singular control problems remains unknown. However, there exists a semi-analytical solution, demonstrated by \citet{harrison1983instantaneous}, which provides a closed-form value function parameterised by unknown control barriers. These barriers can then be determined using root-finding methods such as bisection or Newton's method. A relevant application of this approach to our problem domain is discussed by \citet{archankul_thijssen_ferrari_hellmann_2025}, where they address singular control in an inventory context with maxmin ambiguity. Nonetheless, obtaining the semi-analytical solution is contingent upon the availability of an analytic solution for the first part of the HJB equation \eqref{eq: HJB4}, namely, the solution to $\cL\vt + F=0$ in closed form. Unfortunately, our problem encounters difficulty in this regard, as the quadratic term representing smooth ambiguity adjustment, expressed in $F$, introduces non-linearity, precluding an analytical solution for $\cL\vt + F=0$. Consequently, numerical approximation becomes necessary. One of the most promising methods, extensively documented in the literature for addressing singular control problems, is the Markov chain approximation (MCA), as demonstrated by \citet{kushner1991,kushner2001numerical}.


Before addressing the MCA, we consider the following assumption for mathematical tractability.
\begin{assumption}
    \begin{enumerate}
        \item $\alpha(x) = a \in \sr, \;\textup{and}\; \sigma(x) = b >0$
        \item $f(x) = \check c x^- + \hat c x^+$, where $x^{\pm} = \max\{\pm x,0\}$ and $\check c,\hat c>0.$
        \item $\tau = T-t,$ where $v(\tau,R_{\tau})|_{\tau=0}=0 $    
    \end{enumerate}
\end{assumption}
 The initial assumption implies that the controlled inventory process follows an arithmetic Brownian motion (ABM). The second means that the inventory is subject to a linear holding cost with a target value $x=0$. The last assumption
 referred to as a \textit{change of time}, where the DM now focuses on $\tau$, the duration at which the inventory level is monitored. Moreover, $v(\tau,R_{\tau})|_{\tau=0}=0$ means that there are no initial cost of inventory holding since there is no activity at the time of the valuation. This is equivalent to considering a zero terminal condition ($\xi_T=0$), i.e., there is no `bequest' cost at the terminal time of intervention.
 By these assumptions, we therefore have the following recursive utility.
\begin{align}\label{eq: value function tau}
    V(\tau,x,m)&\triangleq v(T-t,x,m) \nonumber\\&= \inf_{A\in\cD} \Ep^{\Pp}_{\cG_{T-\tau}}\Bigg[ \int_{T-\tau}^{T} e^{-\rho r}\Bigg[ \Bigg(f(X^A_r)\rd r +\ell \rd A^-_r + u \rd A^+_r \nonumber\\ &\;\;\;\;\;\;\;\;\;\;\;\;\;\;\;\;\;\;\;\;\;\;\frac{\gamma S(T-r)}{2}\left(b\diff{V(r,X^A_r,M_r)}{x} + S(T-r)\diff{V(r,X^A_r,M_r)}{m}\right)^2\Bigg)\rd r   \Bigg]\Bigg]
\end{align}
where $(X^A,M)$ solve
\begin{align}
    \rd X^A_{t} &= (a - b M_t) \rd t + b \rd \overline B_t + \rd A_t,\;\;\;X^A_{0}=x \label{eq: X abm}\\
    \rd M_t &= S(t)\rd \overline B_t + \rd D_t,\;\;\; M_0=m. \label{eq: M abm}\\
    S(t) &= \frac{s}{1+st}.
\end{align}
Therefore, by the change of times gives that $V$ defined by \eqref{eq: value function tau} is a viscosity solution to the HJB equation:
\begin{align}\label{eq: HJB tau}
    \min\bigg\{\cL_{\tau} V(\tau,r) + \frac{\gamma S(T-\tau)}{2}&\left(b\diff{V}{x}(\tau,r)+s(T-\tau)\diff{V}{m}(\tau,r)\right)^2,&&\nonumber\\&\;\;\;\;\;\;\;\;\;\;\;\;\;\;\;\;\; \ell+\diff{V}{x}(\tau,r),u-\diff{V}{x}(\tau,r)\bigg\}=0\nonumber&&\\&\;\;\;\;\;\;\;\;\;\;\;\;\;\;\;\;\;\;\;\;\;\;\;\;\;\;\;\;\;\;\;\;\;\;\;\;\;\;\;\;\;\;\;\;\;\;\;\;\;\;\;\; V(0,r)=0,
\end{align}
where 
\begin{align*}
    \cL_{\tau} V\triangleq -2\diff{V}{\tau}-\rho V + &(a-mb)\diff{V}{x}  +\frac{b^2}{2}\secdiff{V}{x} +\frac{S(T-\tau)^2}{2}\secdiff{V}{m} + S(T-\tau)b\crossdiff{V}{x}{m}.
\end{align*}

The main idea of MCA is to approximate $(X^A,M)$ by controlled discrete-time Markov chains on a finite time-state space, which is consistent with the local property of $(X^A,M)$. Namely, given $\delta,h_1,h_2>0$, and denoted by $\left(X^{\delta,h_1},M^{\delta,h_2}\right)\triangleq\left(X^{\delta,h_1}_{k},M^{\delta,h_2}_{k}\right)_{k\geq 0}$, controlled discrete-time Markov chains on discrete time-state space \begin{align}
    \mathcal S^{\delta,h_1,h_2} \triangleq \Delta([0,T])\times\Delta([\xL, \xU])\times \Delta([\mL,\mU]),
\end{align}
where
\begin{align}
    \Delta([0,T])&\triangleq \left\{T,(N_T-1)\delta,\ldots,2 \delta,\delta,0\right\}\\
    \Delta([\xL, \xU])&\triangleq \left\{\xL, \xL+h_1, \xL+ 2h_1, \ldots, \xL +(N_x-1)h_1, \xU\right\}\\
    \Delta([\mL, \mU])&\triangleq \left\{\mL, \mL+h_2, \mL+ 2h_2, \ldots, \mL +(N_m-1)h_2, \mU\right\}
\end{align}
for some $N_T,N_x,N_m\in \N$. 
The transition probabilities are denoted by 
\begin{align}
    p^{\delta,h_1,h_2}_{t,x,m}(s,y,n)\triangleq \Pp\left((X^{\delta,h_1}_{s},M^{\delta,h_2}_{s})=(y,n)|\cG^{\delta,h_1,h_2}_{t}\right),
\end{align}
where $\cG^{\delta,h_1,h_2}_{t}$ is a discrete-time filtration generated by $\Big\{X^{\delta,h_1}_{r}, M^{\delta,h_2}_{r}| r\leq t, (X^{\delta,h_1}_{t},M^{\delta,h_2}_{t})=(x,m)\Big\}$. Here, $|\xL|$ and $|\xU|$ are chosen large enough so that $-\ell\leq\frac{ V(t,x+h_1,m)- V(t,x-h_1,m)}{2h_1}\leq u$ for any $x\in\Delta([\xL, \xU])$, while $ p^{\delta,h_1,h_2}_{t,x,m}(s,y,n)$ is defined consistently with the local dynamic of \eqref{eq: value function tau}. 

We recall the value function \eqref{eq: value function tau} weakly solves the HJB equation \eqref{eq: HJB tau}. Following \citet{kushner2001numerical}, once we apply the \textit{upwind} implicit finite difference scheme\footnote{The reason we use the implicit scheme is the fact that the control barriers in our setting is time-invariant, so that we must treat time as another (virtual) state variable to have the forward equation \eqref{eq: Tv} locally consistent with the corresponding HJB equation.} to \eqref{eq: HJB tau} on the mesh $ \mathcal S^{\delta,h_1,h_2}$,  an approximation of $V$ is given by 

\begin{align}\label{eq: singular mca}
    \overline V(t,x,m)  = 
    \begin{cases}
        \mathrm T  \overline V(t,x,m) & \textup{if } m \notin \{\mL,\mU\}\\
        \overline V(t,x,m+h_2) & \textup{if } m = \mL \\
        -\overline V(t,x,m-h_2) & \textup{if } m = \mU,
    \end{cases}
\end{align}
where

\begin{align}\label{eq: Tv}
   \mathrm T  \overline V(t,x,m) &\triangleq \min\Bigg\{ e^{-\rho \Delta t^{\delta,h_1,h_2}} \sum_{s,y,n} \overline p^{\delta,h_1,h_2}_{t,x,m}(s,y,n)\overline V(s,y,n)  
     + f(x)\Delta t^{\delta,h_1,h_2} \nonumber\\&\hspace{2cm}+ \frac{\gamma S(T-t)}{2}\bigg(\partial^{h_1,h_2}_+\overline V(t,x,m) + \partial^{h_1,h_2}_-\overline V(t,x,m)\bigg)^2 \Delta t^{\delta,h_1,h_2}, \nonumber\\
    & \hspace{2cm}\sum_{s,y,n} \check p^{\delta,h_1,h_2}_{t,x,m}(s,y,n)\overline V(s,y,n) +h\ell, \;\; hu - \sum_{s,y,n} \hat p^{\delta,h_1,h_2}_{t,x,m}(s,y,n)\overline V(s,y,n)
    \Bigg\}\nonumber\\
    &\triangleq\min\{\overline V_c(t,y,n),\overline V_{\ell}(t,y,n),\overline V_{u}(t,y,n)\},
\end{align}
where
\begin{align*}
    \partial^{h_1,h_2}_+\overline V(t,x,m) &\triangleq \left( b\frac{\overline V(t,x+h_1,m) -\overline V(t,x,m)}{2h_1} +S(T-t)\frac{\overline V(t,x,m+h_2) -\overline V(t,x,m)}{2h_2}\right)^+\\
    \partial^{h_1,h_2}_-\overline V(t,x,m) &\triangleq \left(b\frac{\overline V(t,x-h_1,m) -\overline V(t,x,m)}{2h_1} +S(T-t)\frac{\overline V(t,x,m-h_2) -\overline V(t,x,m)}{2h_2}\right)^-.
\end{align*}
\begin{remark}
    Note that the nonlinear terms of \eqref{eq: Tv} are split into two components: the positive part, handled by a forward scheme, and the negative part, handled by a backward scheme. In other words, we apply a forward scheme when the approximated value function is increasing and a backward scheme when it is decreasing. This approach not only ensures the existence of a unique fixed point for \eqref{eq: Tv}, but also implies a transition probability for the original nonlinear expectation, specifically, when the transition probability is approximated by the measure $\Qp^\ast$ determined by \eqref{def: overline Q}.
\end{remark}
The first term on the RHS of \eqref{eq: singular mca} represent the next time step value function on a continuation region where its transition probability is $\overline p^{\delta,h_1,h_2}_{t,x,m}(s,y,n) = \frac{\tilde p^{\delta,h_1,h_2}_{t,x,m}(s,y,n)}{ \underaccent{\tilde}{p}^{\delta,h_1,h_2}_{t,x,m}}$ given that
\begin{align*}
    \tilde p^{\delta,h_1,h_2}_{i\delta,x,m}(s,y,n)=
    \begin{cases}
         \frac{b^2}{2h_1^2} -\frac{b S(T-s)}{h_1h_2} + \frac{(a - mb)^{\pm}}{h_1} & \textup{if }(s,y,n)=(t,x\pm h_1,m)\\
         \frac{S(T-s)^2}{2h_2^2} -\frac{b S(T-s)}{h_1h_2} & \textup{if }(s,y,n)=(t,x,m\pm h_2)\\
         \frac{b S(T-s)}{2h_1h_2} & \textup{if }(s,y,n)=(t,x\pm h_1,m\pm h_2)\\
         \frac{2}{\delta} & \textup{if }(s,y,n)=(t-\delta,x,m)\\
         0 &\textup{otherwise},
    \end{cases}
\end{align*}
\begin{align*}
    \underaccent{\tilde}{p}^{\delta,h_1,h_2}_{t,x,m} = \frac{2}{\delta} + \frac{b^2}{h_1^2}+\frac{S(T-t)^2}{h_2^2}   -\frac{bS(T-t)}{h_1h_2} + \frac{|a - mb|}{h_1} , \textup{   and}
\end{align*}
\begin{align*}
    \Delta t^{\delta,h_1,h_2} = \frac{1}{\underaccent{\tilde}{p}^{\delta,h_1,h_2}_{t,x,m}}
\end{align*}
when $ \mathrm T \overline  V(t+\delta,x,m) =\overline V_c(t,y,n)$. To ensure that $\overline p^{\delta,h_1,h_2}_{t,x,m}(s,y,n)\in[0,1]$ on $\mathcal S^{\delta,h_1,h_2}$, we assume that $\frac{h_1}{h_2}\in\left(\frac{s}{b(1+sT)},\frac{s}{b}\right)$.  On the other hand, if the minimum of \eqref{eq: singular mca} is obtained by the second or third term, the optimal policy is then to exert the lower of the upper control, respectively. That is
\begin{align}
    \check p^{\delta,h_1,h_2}_{t,x,m}(s,y,n)=
    \begin{cases}
         1 & \textup{if }(s,y,n)=(t,x+h,m)\\
         0 &\textup{otherwise}.
    \end{cases}
\end{align}
when $\mathrm T  \overline V(t+\delta,x,m) =\overline V_{\ell}(t,y,n)$, or 
\begin{align}
    \hat p^{\delta,h_1,h_2}_{t,x,m}(s,y,n)=
    \begin{cases}
         1 & \textup{if }(s,y,n)=(t,x-h,m)\\
         0 &\textup{otherwise}.
    \end{cases}
\end{align}
when $\mathrm T \overline V(t,x,m) =\overline V_u(t,y,n)$. In other words, when the chain $X^{\delta,h}$ reaches the lower control trigger, it is reflected upward almost surely. The similar feature applies to the upper region. Therefore, the transition probability of the filtered controlled processes is defined by 
\begin{align}
    p^{\delta,h_1,h_2}_{t,x,m}(s,y,n) &\triangleq \Bigg(\overline p^{\delta,h_1,h_2}_{t,x,m}(s,y,n) \mathds{1}_{\Big\{-\ell<\frac{\overline V(t,x+h_1,m)- \overline V(t,x-h_1,m)}{2h_1}<u\Big\}} \nonumber\\&\;\;\;\;\;\;\;\;\;\;+ \check p^{\delta,h_1,h_2}_{t,x,m}(s,y,n) \mathds{1}_{\Big\{\frac{\overline V(t,x+h_1,m)- \overline V(t,x,m)}{h_1}\leq \ell\Big\}} \nonumber\\&\;\;\;\;\ \;\;\;\;\;+ \hat p^{\delta,h_1,h_2}_{t,x,m}(s,y,n) \mathds{1}_{\Big\{\frac{\overline V(t,x,m)- \overline V(t,x-h_1,m)}{h_1}\geq u\Big\}}\Bigg)\mathds{1}_{m\notin\{\mL,\mU\}} + \mathds{1}_{m=\mL} + \mathds{1}_{m=\mU}.
\end{align}
Define $(\Delta X^{\delta,h_1}_{t},\Delta M^{\delta,h_2}_{t}) \triangleq ( X^{\delta,h_1}_{t+\delta} - X^{\delta,h_1}_{t}, M^{\delta,h_2}_{t+\delta}-M^{\delta,h_2}_{t})$, and denote $\Ep^{\delta,h_1,h_2}_{t,x,m}$ a conditional expectation consistent with $p^{\delta,h_1,h_2}_{t,x,m}(s,y,n)$. By convexity of $v$, one can check that 
\begin{align}
    \sum_{s,y,n}p^{\delta,h_1,h_2}_{t,x,m}(s,y,n) &=1\\
    \Ep^{\delta,h_1,h_2}_{t,x,m}[\Delta X^{\delta,h}_{t}] &=(a-M^{\delta,h}_tb)\Delta t^{\delta,h_1,h_2} + \mathcal O(\delta,h) \label{eq: x mca mean}\\
    \Ep^{\delta,h_1,h_2}_{t,x,m}[(\Delta X^{\delta,h}_{t} -  \Ep^{\delta,h_1,h_2}_{x,m}[\Delta X^{\delta,h}_{t}])^2]&= b^2\Delta t^{\delta,h_1,h_2} + \mathcal O(\delta,h)\label{eq: x mca variance}\\
    \Ep^{\delta,h_1,h_2}_{t,x,m}[\Delta M^{\delta,h}_{t}] &=0 + \mathcal O(\delta,h) \label{eq: m mca mean}\\
    \Ep^{\delta,h_1,h_2}_{t,x,m}[(\Delta X^{\delta,h}_{t} -  \Ep^{\delta,h_1,h_2}_{x,m}[\Delta X^{\delta,h}_{t}])^2]&= S(T - t)^2\Delta t^{\delta,h_1,h_2} + \mathcal O(\delta,h)\label{eq: m mca variance}
\end{align}
where $ \mathcal O(\delta,h)$ is the error due to approximated $\delta$ and $h$. This in fact align with the local consistency of \eqref{eq: X abm} and \eqref{eq: M abm}, which means $\lim_{(\delta,h)\downarrow(0,0)}(X^{\delta,h},M^{\delta,h})=(X^A,M)$, $\Pp$--a.s. (cf. \citet{kushner2001numerical}). 

Next, we demonstrate that the transformation $\mathrm T$ has a unique fixed point, ensuring that $\overline V$ provides a unique approximation of $V$. The following lemmas play a crucial role in this proof.

\begin{lemma}\label{lem: ineq min}
    Suppose that $a_i,b_i\in\sr$ for $i=1,2,\dots,N$, for some $N\in\N$. Then the following inequality holds.
    \begin{align}\label{eq: ineq min}
        \bigg|\min_{i=1,2,\dots,N}a_i - \min_{i=1,2,\dots,N}b_i\bigg|\leq \min_{i=1,2,\dots,N}|a_i-b_i|.
    \end{align}
\end{lemma}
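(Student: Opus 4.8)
The natural line of attack is to peel off the absolute value by symmetry and then localise both minima at a single well-chosen coordinate. Since the right-hand side $\min_i|a_i-b_i|$ is symmetric under swapping the roles of $(a_i)$ and $(b_i)$, I would assume without loss of generality that $\min_i a_i\le\min_i b_i$, so that the left-hand side equals $\min_i b_i-\min_i a_i\ge 0$. Fix $i^\ast\in\arg\min_i a_i$ and $j^\ast\in\arg\min_i b_i$, so $\min_i a_i=a_{i^\ast}$ and $\min_i b_i=b_{j^\ast}$.

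The first genuine step is to bound $b_{j^\ast}-a_{i^\ast}$ by the discrepancy at the index $i^\ast$. Because $j^\ast$ minimises the $b$'s we have $b_{j^\ast}\le b_{i^\ast}$, hence
\[
    0\;\le\;\min_i b_i-\min_i a_i\;=\;b_{j^\ast}-a_{i^\ast}\;\le\;b_{i^\ast}-a_{i^\ast}\;\le\;|a_{i^\ast}-b_{i^\ast}| .
\]
The same manipulation with the roles of $a$ and $b$ reversed handles the opposite sign, so in the general (non-reduced) formulation one gets $\bigl|\min_i a_i-\min_i b_i\bigr|\le\max\{|a_{i^\ast}-b_{i^\ast}|,|a_{j^\ast}-b_{j^\ast}|\}$. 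At this point the difference of the minima is controlled by the discrepancy at \emph{a particular} coordinate, and the only remaining task is to replace $|a_{i^\ast}-b_{i^\ast}|$ by $\min_i|a_i-b_i|$.

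I expect this last replacement to be the main obstacle, and in fact it cannot be carried out: $|a_{i^\ast}-b_{i^\ast}|$ is just one term of the family $\{|a_i-b_i|\}_i$ and need not be its smallest member. Concretely, with $N=2$, $(a_1,a_2)=(0,5)$ and $(b_1,b_2)=(3,4)$, one has $\bigl|\min_i a_i-\min_i b_i\bigr|=3$ while $\min_i|a_i-b_i|=1$, so \eqref{eq: ineq min} as written is false. What the argument above proves cleanly is the same inequality with $\max_i|a_i-b_i|$ on the right-hand side (since $|a_{i^\ast}-b_{i^\ast}|\le\max_i|a_i-b_i|$), and that is precisely the estimate needed to run the fixed-point argument for the operator $\mathrm T$ in \eqref{eq: Tv}, where one must bound $\bigl|\min\{\overline V_{1,c},\overline V_{1,\ell},\overline V_{1,u}\}-\min\{\overline V_{2,c},\overline V_{2,\ell},\overline V_{2,u}\}\bigr|$ uniformly by the component-wise discrepancies. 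Accordingly, my proposal is to carry out the two-step argument above and state Lemma~\ref{lem: ineq min} with $\max$ in place of $\min$ on the right-hand side.
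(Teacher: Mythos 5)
Your counterexample is correct: with $(a_1,a_2)=(0,5)$ and $(b_1,b_2)=(3,4)$ the left-hand side of \eqref{eq: ineq min} equals $3$ while the right-hand side equals $1$, so the lemma as stated is false, and your two-line localisation argument proves the correct inequality $\bigl|\min_i a_i-\min_i b_i\bigr|\le\max_i|a_i-b_i|$. The paper's own proof runs the four-case analysis for $N=2$ and fails exactly at the final ``summary'' step: in the two cases where both minima are attained at the same index, the case analysis only yields the discrepancy at that single index, which need not be the smaller of $|a_1-b_1|$ and $|a_2-b_2|$; your example realises precisely the case $\min\{a_1,a_2\}=a_1$, $\min\{b_1,b_2\}=b_1$. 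So you have not taken a different route to the same result — you have found a genuine error in the statement and in the paper's proof, and your proposed restatement with $\max$ is the right one.

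One caveat on your closing claim, however: substituting $\max$ for $\min$ is not by itself enough to rescue Proposition~\ref{prop: unique V} as written. The paper uses the $\min$-version precisely to select the discounted branch, bounding $|\mathrm T\overline V_1-\mathrm T\overline V_2|$ by $\min\{e^{-\rho\Delta t},1\}$ times the discrepancy of $\overline V_1-\overline V_2$; with the $\max$-version the constant becomes $\max\{e^{-\rho\Delta t},1\}=1$, since the control and reflection branches carry no discount factor, so $\mathrm T$ is only shown to be nonexpansive and the contraction mapping theorem no longer applies directly. Repairing the uniqueness argument therefore requires something extra — for instance, exploiting that the action branches shift the state to a neighbouring grid point and iterating until the (discounted) continuation branch is hit, or an induction over the time slices of the scheme, where the $\frac{2}{\delta}$ transition to the previous time level supplies the strict gain — rather than a one-line replacement of $\min$ by $\max$ in Lemma~\ref{lem: ineq min}.
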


\noindent\begin{proof}[Proof of Lemma~\ref{lem: ineq min}]
    Suppose that that $i=1,2$. Then, it holds that
    \begin{align*}
        |\min\{a_1,a_2\}-\min\{b_1,b_2\}|=\begin{cases}
            |a_1-b_1| & \textup{if}\;\; \min\{a_1,a_2\}=a_1\geq\min\{b_1,b_2\}=b_1\\
            |a_1-b_2|\leq |a_2-b_2| & \textup{if}\;\; \min\{a_1,a_2\}=a_1\geq\min\{b_1,b_2\}=b_2\\
            |a_2-b_1|\leq |a_1-b_1| & \textup{if}\;\; \min\{a_1,a_2\}=a_2 \geq\min\{b_1,b_2\}=b_1\\
            |a_2-b_2| & \textup{if}\;\; \min\{a_1,a_2\}=a_2\geq\min\{b_1,b_2\}=b_2.
        \end{cases}
    \end{align*}
    In summary, we have $|\min\{a_1,a_2\}-\min\{b_1,b_2\}|\leq\min\{|a_1-b_1|,|a_2-b_2|\}$. Similarly, it follows that \eqref{eq: ineq min} holds for any $N\in \N$, therefore, completing the proof.
\end{proof}

By a straightforward calculation, we obtain the following result.
\begin{lemma}\label{lem: T hat}
    Suppose that $\widehat{\mathrm T}$ is an operator such that
    \begin{align}\label{eq: Tv2}
   \widehat{\mathrm T}  \overline V(t,x,m) &\triangleq \min\Bigg\{ e^{-\rho\Delta t_q^{\delta,h_1,h_2}(s,y,n)}\sum_{s,y,n} \overline q^{\delta,h_1,h_2}_{t,x,m}(s,y,n)\overline V(s,y,n)  
     + f(x)\Delta  t_q^{\delta,h_1,h_2}(s,y,n), \nonumber\\
    & \hspace{1cm}\sum_{s,y,n} \check p^{\delta,h_1,h_2}_{t,x,m}(s,y,n)\overline V(s,y,n) +h\ell, \;\; hu - \sum_{s,y,n} \hat p^{\delta,h_1,h_2}_{t,x,m}(s,y,n)\overline V(s,y,n)
    \Bigg\},
\end{align}
where 
\begin{align}
    \overline q^{\delta,h_1,h_2}_{t,x,m}(s,y,n) &= \frac{\tilde p^{\delta,h_1,h_2}_{t,x,m}(s,y,n)+\tilde q^{\delta,h_1,h_2}_{t,x,m}(s,y,n)}{ \underaccent{\tilde}{p}^{\delta,h_1,h_2}_{t,x,m}+\underaccent{\tilde}{q}^{\delta,h_1,h_2}_{t,x,m}(s,y,n)}\label{eq: q hat}\\
    \tilde q^{\delta,h_1,h_2}_{t,x,m}(s,y,n)&= \begin{cases}
         \partial^{h_1,h_2}_\pm\overline V(t,x,m)\frac{\gamma b S(T-t)}{2h_1} & \textup{if }(s,y,n)=(t,x\pm h_1,m)\\
         \partial^{h_1,h_2}_\pm\overline V(t,x,m)\frac{\gamma S^2(T-t)}{2h_2} & \textup{if }(s,y,n)=(t,x,m\pm h_2)\\
         0 &\textup{otherwise},
    \end{cases}\nonumber\\
    \underaccent{\tilde}{q}^{\delta,h_1,h_2}_{t,x,m}(s,y,n)&= \left(\partial^{h_1,h_2}_+\overline V(t,x,m)+\partial^{h_1,h_2}_-\overline V(t,x,m)\right)\left(\frac{b}{h_1}+\frac{S(T-t)}{h_2}\right)\frac{\gamma S(T-t)}{2}.\nonumber\\
    \Delta  t_q^{\delta,h_1,h_2}(s,y,n)&=\frac{1}{ \underaccent{\tilde}{p}^{\delta,h_1,h_2}_{t,x,m}+\underaccent{\tilde}{q}^{\delta,h_1,h_2}_{t,x,m}(s,y,n)} \nonumber
\end{align}
Then $\mathrm T = \widehat{\mathrm T}$. Moreover, $\overline q^{\delta,h_1,h_2}_{t,x,m}(s,y,n)\in[0,1)$, for all $(s,y,n)\in \mathcal S^{\delta,h_1,h_2}$.
\end{lemma}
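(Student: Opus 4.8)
The plan is to establish the operator identity $\mathrm T=\widehat{\mathrm T}$ by comparing the three branches of the minima in \eqref{eq: Tv} and \eqref{eq: Tv2}, and then to read off the sub-probability property of $\overline q$ from the bookkeeping that surfaces along the way. The second and third branches --- those governing the lower and upper reflections --- are literally the same expressions $\sum_{s,y,n}\check p^{\delta,h_1,h_2}_{t,x,m}(s,y,n)\overline V(s,y,n)+h\ell$ and $hu-\sum_{s,y,n}\hat p^{\delta,h_1,h_2}_{t,x,m}(s,y,n)\overline V(s,y,n)$ in both definitions, so nothing needs to be checked there. All the work sits in the first (continuation) branch, where $\widehat{\mathrm T}$ has replaced the explicit quadratic term $\tfrac{\gamma S(T-t)}{2}\big(\partial^{h_1,h_2}_+\overline V+\partial^{h_1,h_2}_-\overline V\big)^2\Delta t^{\delta,h_1,h_2}$ of $\mathrm T$ by a correction $\tilde q^{\delta,h_1,h_2}_{t,x,m}$ of the transition weights, a correction $\underaccent{\tilde}{q}^{\delta,h_1,h_2}_{t,x,m}$ of the normalising denominator, and a shortened time step $\Delta t_q^{\delta,h_1,h_2}=1/\big(\underaccent{\tilde}{p}^{\delta,h_1,h_2}_{t,x,m}+\underaccent{\tilde}{q}^{\delta,h_1,h_2}_{t,x,m}\big)$.

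The crucial step is to linearise that quadratic term. Write $g\triangleq\partial^{h_1,h_2}_+\overline V(t,x,m)+\partial^{h_1,h_2}_-\overline V(t,x,m)\ge 0$. On the set where each of the two positive parts is active, $\partial^{h_1,h_2}_\pm\overline V$ coincides with the plain affine difference quotient it clips, and so --- organising the sign analysis through convexity of $v$ in $x$ (cf.\ the discussion after \eqref{eq: m barrier PDE}) --- $g$ can be written as an explicit linear functional $g=\sum_{s,y,n}c^{\delta,h_1,h_2}_{t,x,m}(s,y,n)\,\overline V(s,y,n)$ of the grid values at $(t,x\pm h_1,m)$, $(t,x,m\pm h_2)$ and the base point $(t,x,m)$. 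Substituting this for one of the two factors of $g$ rewrites $\tfrac{\gamma S(T-t)}{2}g^{2}\Delta t^{\delta,h_1,h_2}$ as $\sum_{s,y,n}\big[\tfrac{\gamma S(T-t)}{2}\,g\,c^{\delta,h_1,h_2}_{t,x,m}(s,y,n)\,\Delta t^{\delta,h_1,h_2}\big]\overline V(s,y,n)$; comparing the off--base coefficients with the statement identifies them with $\tilde q^{\delta,h_1,h_2}_{t,x,m}(\cdot)$, while the base--point coefficient --- recombined with the discount factor $e^{-\rho\Delta t^{\delta,h_1,h_2}}$ and the holding-cost term $f(x)\Delta t^{\delta,h_1,h_2}$ --- reassembles into $\underaccent{\tilde}{q}^{\delta,h_1,h_2}_{t,x,m}$ and $\Delta t_q^{\delta,h_1,h_2}$, so that dividing numerator and denominator through reproduces exactly the continuation branch of $\widehat{\mathrm T}$, giving $\mathrm T=\widehat{\mathrm T}$. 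I expect the main obstacle to be precisely this bookkeeping --- carrying the positive/negative parts in $\partial^{h_1,h_2}_\pm\overline V$ through the substitution over each regime (monotone and interior-optimum) and verifying that the base-point term, the discount factor and the holding-cost term close up cleanly into the $\widehat{\mathrm T}$-form --- which, once the sign pattern is pinned down, is the ``straightforward calculation'' of the statement.

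Finally, for $\overline q^{\delta,h_1,h_2}_{t,x,m}(s,y,n)\in[0,1)$: summing the two bookkeeping identities $\sum_{s,y,n}\tilde p^{\delta,h_1,h_2}_{t,x,m}(s,y,n)=\underaccent{\tilde}{p}^{\delta,h_1,h_2}_{t,x,m}$ and $\sum_{s,y,n}\tilde q^{\delta,h_1,h_2}_{t,x,m}(s,y,n)=\underaccent{\tilde}{q}^{\delta,h_1,h_2}_{t,x,m}$ gives $\sum_{s,y,n}\overline q^{\delta,h_1,h_2}_{t,x,m}(s,y,n)=1$, so it suffices to show each entry is non-negative and that the mass cannot concentrate at a single node. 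Non-negativity of the $\tilde p$--part is the Courant--Friedrichs--Lewy-type restriction $h_1/h_2\in\big(s/(b(1+sT)),\,s/b\big)$ already imposed after \eqref{eq: Tv}, and the correction $\tilde q$ is of order $\gamma$ and is dominated by the corresponding $\tilde p$--weight (of order $h_i^{-2}$) since the discrete $x$-gradient of $\overline V$ stays between $-\ell$ and $u$ on the continuation region. For the strict inequality, note that the time-reduction node $(t-\delta,x,m)$ always carries strictly positive mass $\tfrac{2}{\delta}\big/\big(\underaccent{\tilde}{p}^{\delta,h_1,h_2}_{t,x,m}+\underaccent{\tilde}{q}^{\delta,h_1,h_2}_{t,x,m}\big)$ and receives no $\tilde q$--correction, so no other node can absorb the whole normalisation; together with non-negativity this yields $\overline q<1$ at every node. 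It is this sub-probability property, together with $e^{-\rho\Delta t_q^{\delta,h_1,h_2}}<1$, that ultimately makes $\widehat{\mathrm T}=\mathrm T$ a contraction with a unique fixed point $\overline V$.
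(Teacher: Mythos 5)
The paper gives no actual argument for this lemma (its ``proof'' is the phrase ``by a straightforward calculation''), so the only question is whether your calculation closes --- and, as sketched, it does not. The step you defer as ``bookkeeping'' is precisely where the proposal breaks. Freezing the whole factor $g\triangleq\partial^{h_1,h_2}_+\overline V(t,x,m)+\partial^{h_1,h_2}_-\overline V(t,x,m)$ and expanding the other factor gives central-difference coefficients $\pm\frac{\gamma b S(T-t)}{4h_1}g$ at $(t,x\pm h_1,m)$ and $\pm\frac{\gamma S^2(T-t)}{4h_2}g$ at $(t,x,m\pm h_2)$: these are negative at the backward nodes, carry no base-point term, and do not coincide with the weights in the statement, which are $\frac{\gamma b S(T-t)}{2h_1}\partial^{h_1,h_2}_{\pm}\overline V$ and $\frac{\gamma S^2(T-t)}{2h_2}\partial^{h_1,h_2}_{\pm}\overline V$. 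The decomposition that does generate the stated $\tilde q^{\delta,h_1,h_2}_{t,x,m}$ pairs each one-sided part with its own one-sided difference, but then what the modified kernel absorbs is
\begin{align}
\sum_{s,y,n}\tilde q^{\delta,h_1,h_2}_{t,x,m}(s,y,n)\bigl(\overline V(s,y,n)-\overline V(t,x,m)\bigr)
=\gamma S(T-t)\Bigl(\bigl(\partial^{h_1,h_2}_+\overline V(t,x,m)\bigr)^2-\bigl(\partial^{h_1,h_2}_-\overline V(t,x,m)\bigr)^2\Bigr),
\end{align}
which is not the term $\frac{\gamma S(T-t)}{2}\bigl(\partial^{h_1,h_2}_+\overline V+\partial^{h_1,h_2}_-\overline V\bigr)^2$ appearing in $\mathrm T$ (for smooth data both one-sided parts are close to $\tfrac12\bigl(b\,\overline V_x+S\,\overline V_m\bigr)^+$, so the absorbed quantity is near zero while the original is not). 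In addition, the discount factors $e^{-\rho\Delta t^{\delta,h_1,h_2}}$ versus $e^{-\rho\Delta t_q^{\delta,h_1,h_2}}$ and the running-cost terms $f(x)\Delta t^{\delta,h_1,h_2}$ versus $f(x)\Delta t_q^{\delta,h_1,h_2}$ cannot be ``recombined'' exactly: moving an undiscounted additive term into a change of the normalisation and of the discount exponent is only valid up to $O(\Delta t^2)$. So the exact operator identity $\mathrm T=\widehat{\mathrm T}$ does not follow from the manipulation you propose; an honest proof must either exhibit an exact algebraic identity (which the printed $\partial^{h_1,h_2}_{\pm}$, $\tilde q$, $\underaccent{\tilde}{q}$ do not satisfy), or state and prove the equivalence only modulo the same local-consistency errors as the rest of the scheme. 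Your proposal does neither, and since it never writes out the coefficient comparison it does not detect the mismatch.

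The second assertion is handled better, though partly off-target. That $\sum_{s,y,n}\tilde q^{\delta,h_1,h_2}_{t,x,m}(s,y,n)=\underaccent{\tilde}{q}^{\delta,h_1,h_2}_{t,x,m}$ is immediate; nonnegativity of $\tilde q$ needs no domination by the $\tilde p$-weights, since it follows directly from $\partial^{h_1,h_2}_{\pm}\overline V\geq0$ together with $\gamma\geq0$ (note the lemma silently requires $\gamma\geq0$; under the paper's own sign convention for ambiguity aversion, $\gamma<0$, the weights would be negative and the conclusion false); and your strictness argument via the strictly positive mass $\tfrac{2/\delta}{\underaccent{\tilde}{p}+\underaccent{\tilde}{q}}$ at the node $(t-\delta,x,m)$ is fine. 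Be aware, however, that the other identity you invoke, $\sum_{s,y,n}\tilde p^{\delta,h_1,h_2}_{t,x,m}(s,y,n)=\underaccent{\tilde}{p}^{\delta,h_1,h_2}_{t,x,m}$, does not hold for the formulas as printed (the $-bS/(h_1h_2)$ cross terms appear in four neighbour weights but only once in $\underaccent{\tilde}{p}$), so even this part requires either a correction of the weights or an explicit renormalisation rather than a bare appeal to the ``bookkeeping identity''.
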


Now, we are ready to show the uniqueness of $\overline V$.

\begin{proposition}\label{prop: unique V}
    A recursive functional $\overline V$ defined by \eqref{eq: singular mca} has a unique solution.
\end{proposition}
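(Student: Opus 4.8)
The plan is a Banach fixed-point argument on the finite mesh. Since $\mathcal S^{\delta,h_1,h_2}$ is a finite set, the space $\mathcal B$ of real-valued functions on it, equipped with the supremum norm $\|\cdot\|_\infty$, is a finite-dimensional, hence complete, normed space. I would recast the recursion \eqref{eq: singular mca} as a fixed-point equation $\overline V=\mathcal G\overline V$, where $\mathcal G$ acts by $\mathrm T$ on nodes with $m\notin\{\mL,\mU\}$, by the coordinate shift $\overline V(t,x,m+h_2)$ on $m=\mL$ and by $-\overline V(t,x,m-h_2)$ on $m=\mU$, and carries the anchoring $\overline V(0,\cdot)=0$. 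It then suffices to show that some iterate $\mathcal G^N$ is a strict contraction on $\mathcal B$, whence Banach's theorem delivers the unique solution $\overline V$.

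The core is a node-by-node estimate of $\mathcal G\overline V_1-\mathcal G\overline V_2$. On the two reflection nodes $\mathcal G$ is a signed coordinate shift, hence non-expansive there. On interior nodes, Lemma \ref{lem: ineq min} gives
\[
  |\mathrm T\overline V_1(t,x,m)-\mathrm T\overline V_2(t,x,m)|\le\min\{|\overline V_{c,1}-\overline V_{c,2}|,\;|\overline V_{\ell,1}-\overline V_{\ell,2}|,\;|\overline V_{u,1}-\overline V_{u,2}|\},
\]
so it is enough to control each branch. The control branches $\overline V_\ell,\overline V_u$ are coordinate shifts up to the $\overline V$-independent constants $h\ell,hu$, hence non-expansive. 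For the continuation branch I would invoke Lemma \ref{lem: T hat}, which rewrites $\overline V_c$ as $\widehat{\mathrm T}\overline V$: modulo the $\overline V$-independent term $f(x)\Delta t_q$ this equals $e^{-\rho\Delta t_q}\sum_{s,y,n}\overline q^{\delta,h_1,h_2}_{t,x,m}(s,y,n)\overline V(s,y,n)$, where $\overline q^{\delta,h_1,h_2}_{t,x,m}(\cdot)\in[0,1)$ is a sub-probability kernel and $e^{-\rho\Delta t_q}<1$. Provided $\Delta t_q$ is bounded below by some $\underline{\Delta t}>0$ and the map $\overline V\mapsto(\overline q,\Delta t_q)$ is Lipschitz on the bounded set where iterates live, this branch is a genuine contraction with factor at most $e^{-\rho\underline{\Delta t}}<1$.

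To upgrade ``one branch contracts, the others are merely non-expansive'' to a global contraction of an iterate, I would argue that the non-expansive branches cannot be selected indefinitely. A reflection step maps $m=\mL$ (resp.\ $m=\mU$) into the interior $m=\mL+h_2$ (resp.\ $m=\mU-h_2$); an $\overline V_\ell$-step increases $x$ by $h_1$ and an $\overline V_u$-step decreases it by $h_1$; and by the choice of $\xL,\xU$ together with the convexity of $v$, the control branches are optimal only near the respective $x$-boundaries, where the continuation branch is the selected one. Since $\Delta([\xL,\xU])$ and $\Delta([\mL,\mU])$ are finite, after at most $N:=N_x+N_m+2$ consecutive ``instantaneous'' (zero time-increment) moves the continuation branch with a strictly positive step must be invoked. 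Hence $\mathcal G^N$ contracts with factor at most $e^{-\rho\underline{\Delta t}}<1$, and Banach's theorem applies.

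The main obstacle is the genuine nonlinearity of the continuation branch: although Lemma \ref{lem: T hat} puts $\overline V_c$ in a linear-looking form, both the kernel $\overline q$ and the step size $\Delta t_q$ depend on $\overline V$ through the discrete gradients $\partial^{h_1,h_2}_\pm\overline V$ — this is exactly the quadratic smooth-ambiguity term. Closing the argument therefore requires an a priori uniform bound on those gradients along the iteration, which is where the upwind (forward/backward) splitting and the convexity of $v$ enter, keeping $\underaccent{\tilde}{p}^{\delta,h_1,h_2}_{t,x,m}+\underaccent{\tilde}{q}^{\delta,h_1,h_2}_{t,x,m}$ bounded away from zero and the kernel Lipschitz; the strict discount $e^{-\rho\Delta t_q}<1$ then finishes. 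An alternative route, mirroring the Comparison Theorem (Lemma \ref{thm: comparison}), is a monotone-iteration scheme: the $\widehat{\mathrm T}$ representation makes $\mathrm T$ order-preserving, so starting from a sub- and a supersolution one obtains two monotone sequences sandwiching any fixed point, and the strict discounting again forces them to coincide.
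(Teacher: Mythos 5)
Your overall frame---finite mesh, sup norm, contraction-mapping theorem, and the same two auxiliary lemmas---matches the paper, but you part ways at the key estimate, and the detour you take is where the gap lies. Having obtained
\begin{equation}
  \big|\mathrm T\overline V_1(t,x,m)-\mathrm T\overline V_2(t,x,m)\big|\;\le\;\min\Big\{\big|\overline V_{c,1}-\overline V_{c,2}\big|,\ \big|\overline V_{\ell,1}-\overline V_{\ell,2}\big|,\ \big|\overline V_{u,1}-\overline V_{u,2}\big|\Big\}
\end{equation}
from Lemma~\ref{lem: ineq min}, you do not exploit the fact that the right-hand side is a \emph{minimum}: with that bound it suffices that a single branch contracts, and the continuation branch does, with factor $e^{-\rho\Delta t_q^{\delta,h_1,h_2}}<1$, once Lemma~\ref{lem: T hat} has absorbed the quadratic ambiguity term into the sub-probability weights $\overline q^{\delta,h_1,h_2}_{t,x,m}(\cdot)\in[0,1)$ and $\Delta t_q^{\delta,h_1,h_2}\in(0,1)$. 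That is precisely the paper's proof of Proposition~\ref{prop: unique V}: the control and reflection branches are never required to contract, a one-step contraction of $\mathrm T$ in the sup norm follows immediately, and Banach's theorem finishes. By instead insisting that every branch be controlled you are effectively using the max-form of the inequality, and you are then forced into the iterate argument for $\mathcal G^N$.

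That iterate argument is the genuine gap. The claim that after at most $N=N_x+N_m+2$ consecutive non-expansive (zero time-increment) moves the continuation branch ``must be invoked'' is asserted, not proved: it appeals to structural features (convexity, the control branches being optimal only near the $x$-boundaries) that are properties of the value function, not of the arbitrary pair $\overline V_1,\overline V_2$ being compared along the iteration; moreover the branch attaining the minimum at a given node can differ between $\overline V_1$ and $\overline V_2$ and varies from node to node, so chaining the node-by-node bounds does not by itself yield a uniform factor $e^{-\rho\underline{\Delta t}}$ for $\mathcal G^N$. The Lipschitz dependence of $(\overline q,\Delta t_q)$ on $\overline V$ that you ``provide'' is likewise never established (the paper avoids needing it by using only $\overline q^{\delta,h_1,h_2}_{i,t,x,m}\in[0,1)$ and $\Delta t_i^{\delta,h_1,h_2}\in(0,1)$ from Lemma~\ref{lem: T hat}), and the monotone-iteration alternative is only sketched. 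In short: if you use the min in Lemma~\ref{lem: ineq min} as the paper does, the single contractive continuation branch already gives a one-step contraction and your $\mathcal G^N$ construction is unnecessary; as written, your proof is incomplete at exactly that combinatorial step.
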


\noindent\begin{proof}[Proof of Proposition~\ref{prop: unique V}]
    Suppose  that $\overline V_1$ and $\overline V_2$ are solutions of  \eqref{eq: singular mca} associated with time step and transition probabilities $\big(\Delta  t_{q,i}^{\delta,h_1,h_2},\overline q^{\delta,h_1,h_2}_{i,t,x,m},\check p^{\delta,h_1,h_2}_{i,t,x,m},\hat p^{\delta,h_1,h_2}_{i,t,x,m}\big),\;i=1,2$, each satisfies \eqref{eq: q hat}. Then by Lemma~\ref{lem: ineq min} and \ref{lem: T hat}, it follows that for any $(t,x,m)\in \mathcal S^{\delta,h_1,h_2}$,
    \begin{align*}
        \Big|\mathrm T\overline V_1(t,x,m) &- \mathrm T\overline V_2(t,x,m)\Big|=\Big|\widehat{\mathrm T}\overline V_1(t,x,m) - \widehat{\mathrm T}\overline V_2(t,x,m)\Big|\\&\leq\min\Bigg\{\Bigg|e^{-\rho \Delta t^{\delta,h_1,h_2}_{q,1}(s,y,n)} \sum_{s,y,n} \overline q^{\delta,h_1,h_2}_{1,t,x,m}(s,y,n)\overline V_1(s,y,n)-\\&\;\;\;\;\;\;\;\;\;\;\;\;\;\;\;\;\;\;\;e^{-\rho \Delta t^{\delta,h_1,h_2}_{q,2}}(s,y,n) \sum_{s,y,n} \overline q^{\delta,h_1,h_2}_{2,t,x,m}(s,y,n)\overline V_2(s,y,n)\Bigg|\\ &\;\;\;\;\;\;\;\;\;\;\;\;\;\;\;\;\;\;\; \Bigg|\sum_{s,y,n} \check p^{\delta,h_1,h_2}_{1,t,x,m}\overline V_1(s,y,n)-\sum_{s,y,n} \check p^{\delta,h_1,h_2}_{2,t,x,m}\overline V_2(s,y,n)\Bigg|,\\&\;\;\;\;\;\;\;\;\;\;\;\;\;\;\;\;\;\;\;\Bigg|\sum_{s,y,n} \hat p^{\delta,h_1,h_2}_{1,t,x,m}\overline V_1(s,y,n)-\sum_{s,y,n} \hat p^{\delta,h_1,h_2}_{2,t,x,m}\overline V_2(s,y,n)\Bigg|
        \Bigg\}\\
        &\leq\min\bigg\{e^{-\rho \min_{i=1,2} \Delta  t^{\delta,h_1,h_2}_{q,2}(s,y,n) },1\bigg\}\Big| \overline V_1(s,y,n)-\overline V_2(s,y,n)\Big|\\
        &=e^{-\rho \min_{i=1,2} \Delta t_i^{\delta,h_1,h_2}}\Big| \overline V_1(s,y,n)-\overline V_2(s,y,n)\Big|,
    \end{align*}
    for all $(s,y,n)\in \mathcal S^{\delta,h_1,h_2}$. The second equality holds because $\overline q^{\delta,h_1,h_2}_{i,t,x,m}(s,y,n), \check p^{\delta,h_1,h_2}_{i,t,x,m},\check p^{\delta,h_1,h_2}_{i,t,x,m}\in[0,1]$ for $i=1,2$, and any $(s,y,n)\in \mathcal S^{\delta,h_1,h_2}$, thanks to Lemma~\ref{lem: T hat}. Clearly, $e^{-\rho \min_{i=1,2} \Delta t_i^{\delta,h_1,h_2}}\in[0,1)$ since $\Delta t_i^{\delta,h_1,h_2}\in(0,1)$ for $i=1,2$  and $\rho>0$. This means $\mathrm T$ is a contraction mapping. Hence, by the contraction mapping theorem (cf. \citet{Aliprantis2006-mo}, theorem 3.48), $\mathrm T$ has has a unique fixed point and therefore $\overline V$ uniquely determine an approximation of $V$. 
\end{proof}


\subsection{Comparative statics of Arithmetic Brownian Motion}
Now, we illustrate numerical results, obtained from the MCA in the case of an ABM inventory control, to investigate the impact of ambiguity levels on the optimal control policy with observation time $T=20$ and initial condition $\overline V(0,x,m)=0$ for any $(\tau,x,m)\in \mathcal S^{\delta,h_1,h_2}$, where $(\xL,\xU)$ is chosen large enough to contain $\CC^{h_1}_v$ in $\mathcal S^{\delta,h_1,h_2}$, and $(\mL,\mU)$ is selected to contains the largest $100(1-d)\%$ confidence interval of $M_t$ for all $t\in[0,T]$. That is, it must holds that 
\begin{align*}
    (\mL,\mU) \supseteq \sup_{t\geq0}\left(m-z_{\frac{d}{2}}\sqrt{\frac{s^2t}{1+st}}, m+z_{\frac{d}{2}}\sqrt{\frac{s^2t}{1+st}}\right) = \left(m-z_{\frac{d}{2}}s,m+z_{\frac{d}{2}}s\right)
\end{align*}
where $z_{\frac{d}{2}}$ is the standard normal quantile function.For simplicity, we assume henceforth that the decision maker operates with a $99\%$ confidence level (i.e., $z_{0.005} \approx 2.576$) over the zero mean reference prior, i.e., $m=0$. Through trial and error, we find that setting $(\xL, \xU) = (-30, 30)$ and $(\mL, \mU) = (-10, 10)$ yields a suitable choice of boundaries for the model parameters. These values are therefore fixed for the remainder of our analysis.

\subsubsection{General characteristic of the control policy}\label{sec: general}
First, we demonstrate how learning and ambiguity influence the value function and the associated optimal control barriers.

Figure~\ref{fig:vf_surface} shows surface plots of value function in different levels of ambiguity given $\rho=0.2,a=0.2,b=0.2,s=0.1,\ell=u=2$ and $\check c =\hat c =2$ at observation period $\tau=20$. The $x$-axes defines the observed inventory level $X_1$, while the $y$-axes determines belief process $M$ which is here transformed to $M\triangleq\frac{S}{b}(X_1-X_2)$, specifying the interplay $X_1$ and its deterministic counterpart $X_2$. As anticipated, increasing $\gamma$ leads to an increase in the value function. This occurs because the DM accounts for a higher expected cost by evaluating certainty equivalence over posterior estimation, rather than simply relying on the standard expected utility. In other words, more cautious observation results in increased cost. Moreover, the control barrier is unique for each $\gamma$, since the value function is strictly convex in $X_1$ for any $M$. Later, we show that the increase in value function and the shape of control barriers result from the DM's response to ambiguity. 

\begin{figure}[!htb]
   \begin{subfigure}{0.48\textwidth}
     \centering
     \includegraphics[width=\linewidth]{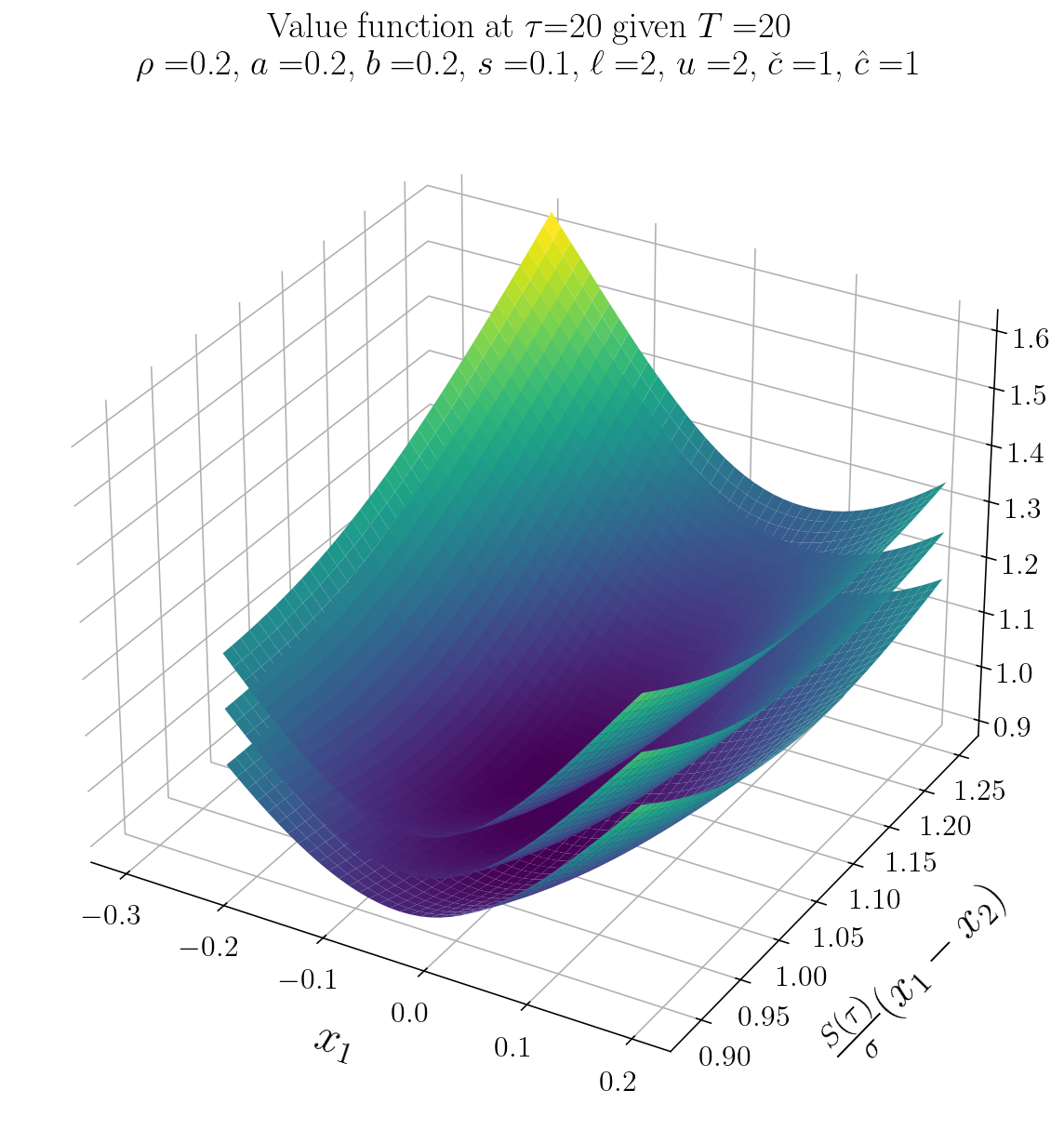}
     \caption{}\label{fig:vf_surface}
   \end{subfigure}\hfill
   \begin{subfigure}{0.48\textwidth}
     \centering
     \includegraphics[width=\linewidth]{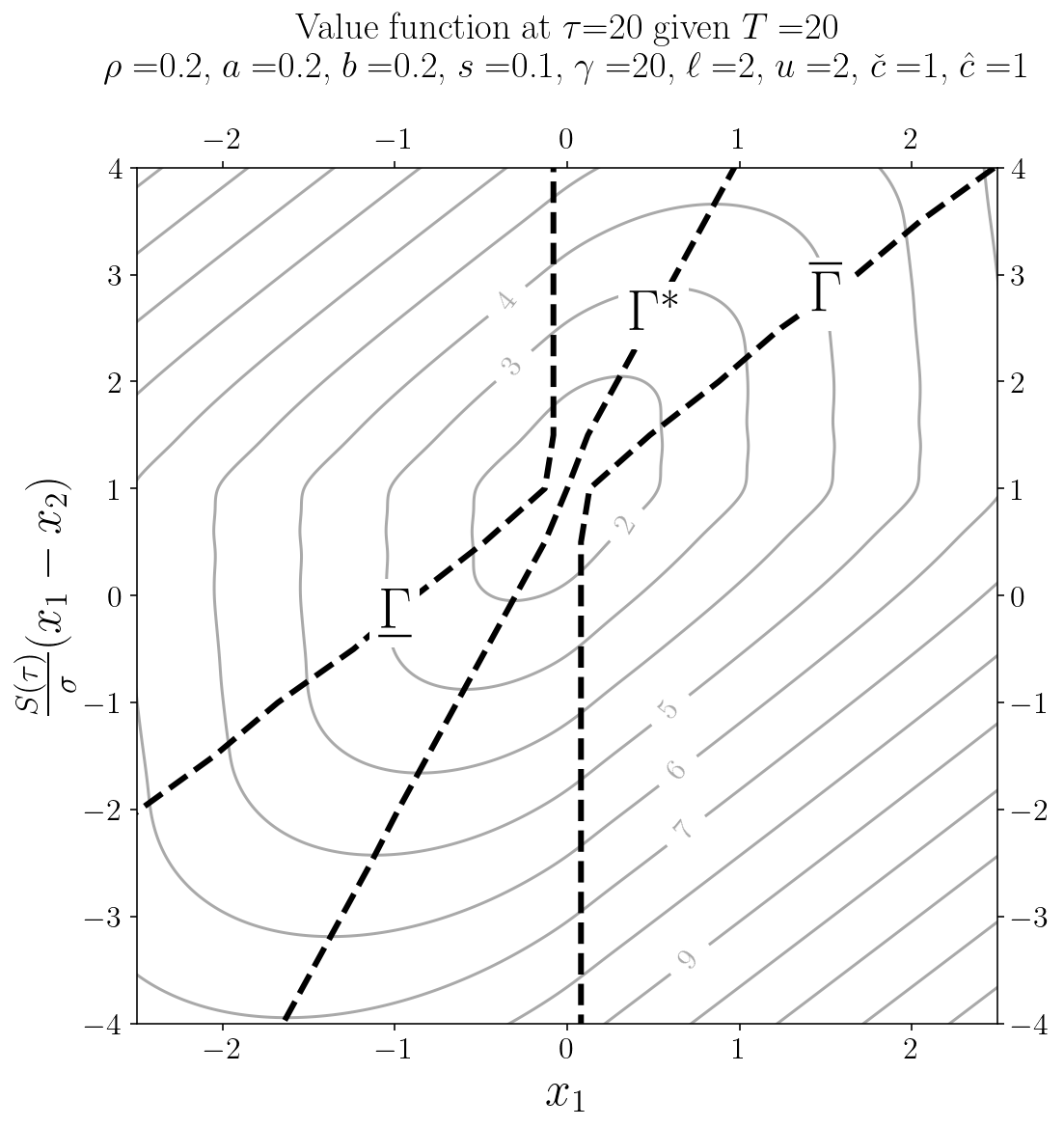}
     \caption{}\label{fig:vf+cb_contour}
   \end{subfigure}
   \caption{Value functions with fixed parameters $\rho=0.2,a=0.2,b=0.2,s=0.1,\ell=u=2, \check{c}=\hat{c}=1$ and $T=20$ at $\tau=20$. Panel (\subref{fig:vf_surface}) displays the surface plots for $\gamma\in\{0,20,40\}$ in an ascending order, while panel (\subref{fig:vf+cb_contour}) demonstrates the contour plot and control barriers for $\gamma=20$. Here $\underline \Gamma$, $\overline \Gamma$ and $\Gamma^\ast$ denote the lower barrier, the upper barrier and the minimum points along the process $X_1$, respectively.}
   \label{fig:vf}
\end{figure}


Figure~\ref{fig:vf+cb_contour} displays contour plots of the value function with $\gamma=20$ at $\tau=20$, along with the optimal control barriers. The left and right dashed lines represent the lower and upper control barriers, respectively, while the middle dashed line indicates the saddle points of the value function for each $M$. The entire area between the lower and upper control barriers defines the \textit{continuation region,} where the DM must apply singular control (in the $(0,1)$-direction) once $X_1$ reaches these barriers to maintain the content within the continuation region for optimality.

It is evident that the minimum (target) point of the value function lies within the narrowest continuation region, which is where $X_1$ has a zero trend, i.e., $a -bM=a+S(X_2-X_1)=0$. The implication is that the decision maker should apply maximal intervention as $X_1$ approaches this neighbourhood in order to minimise costs, since, on average, it incurs the least holding costs for both negative and positive inventory level.

When $\frac{S}{b}(X_1-X_2)$ deviates from the target point, there are two types of control policies to consider: one when $X_1$ and $X_2$ move in the same direction, and another when they move in opposite directions. To understand this, let us first examine the case where both $X_1$ and $X_2$ are negative but not far apart. In this scenario, the optimal control policy, suggested by Figure~\ref{fig:cb_gamma} to~\ref{fig:cb_rho}, indicates that it is best to \textit{wait and learn} how $X_1$ evolves. The same principle applies when both $X_1$ and $X_2$ are positive. This inference is supported by \eqref{eq: X1-X2}, which show that $X_1$ and $X_2$ tend to converge towards each other in a time. As long as the observed inventory level is not too far from its deterministic counterpart (in either direction), it is optimal to do nothing, since the inventory level is expected to reach its target sooner.
However, when $X_1$ and $X_2$ become significantly different, despite having the same negative sign, it is optimal to apply the lower control if $X_1$ is negative. This is because the difference between $X_1$ and $X_2$ leads to a higher value of $M$, which results in a downward trend that accelerates the decline in inventory content, thereby increasing the cost of holding a negative inventory. Therefore, the lower control must be applied. The same logic applies to the upper control when $X_1$ becomes positive.

However, when $X_1$ and $X_2$ move in opposite directions, even with a small difference, the wait and learn approach described earlier does not apply. For instance, consider the case where $X_1$ is negative but $X_2$ is positive. In this scenario, regardless of the magnitude of $X_2$, once $X_1$ decreases in content, it is necessary to immediately implement the lower control.
There are two reasons for this. Firstly, such a situation leads to a significant increase in $M$, which ultimately triggers an immediate application of the lower control, similar to the earlier observation. Secondly, this scenario contradicts the mutual reversion tendency of $X_1$ and $X_2$, as indicated by \eqref{eq: X1-X2}. Therefore, a substantial deviation between them signifies a \textit{false update}, necessitating the lower control to bring $X_1$ back to its target level.
The same reasoning applies to the upper control when $X_1$ becomes positive while $X_2$ is negative.

Given the distinct differences in learning and control policy rules between the two cases, we refer to the first scenario as the \textit{learning-dominant} region and the second as the \textit{control-dominant} region. 

Now we investigate the shape of these regions in the light of ambiguity.

\subsubsection{Comparative Statics of ambiguity attitude}

\begin{figure}[!htb]
   \begin{subfigure}{0.48\textwidth}
     \centering
     \includegraphics[trim={0 0 0 0.9cm},clip,width=\linewidth]{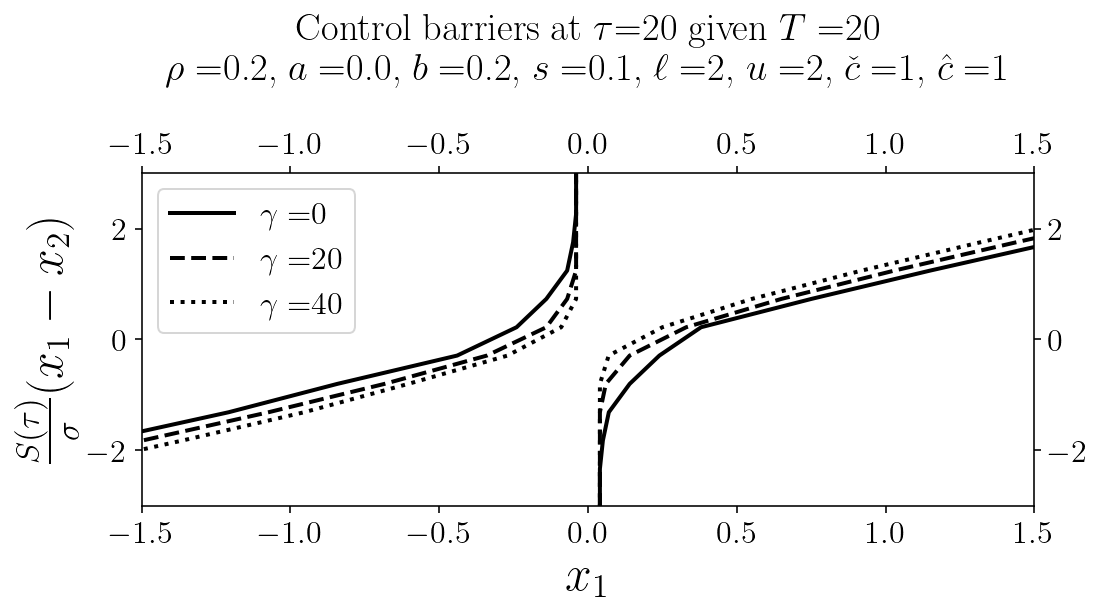}
     \caption{}\label{fig:cb_gamma}
   \end{subfigure}\hfill
   \begin{subfigure}{0.48\textwidth}
     \centering
     \includegraphics[trim={0 0 0 0.9cm},clip,width=\linewidth]{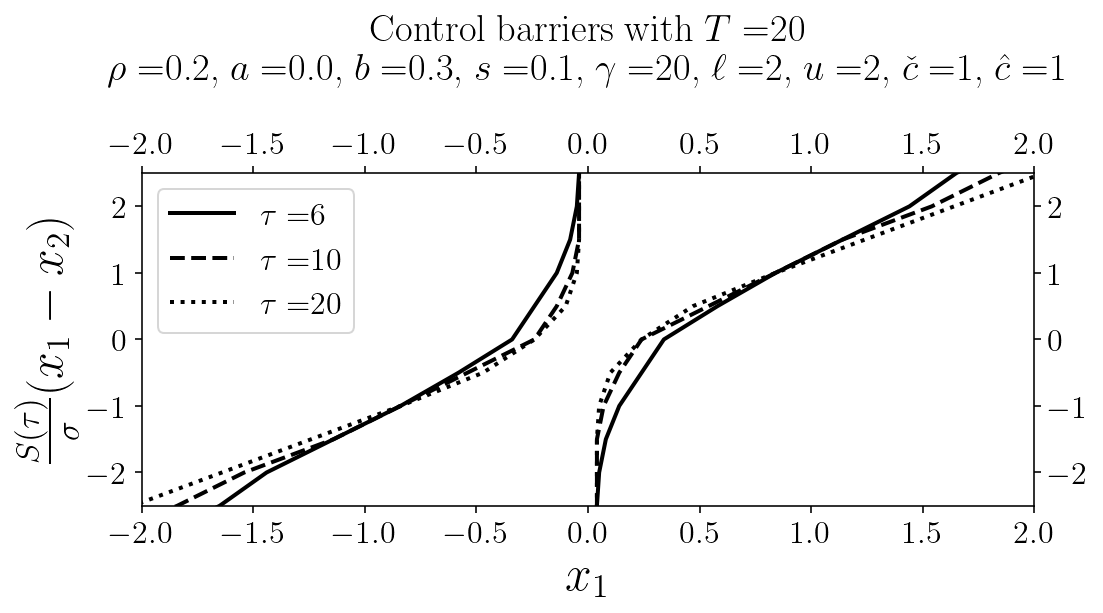}
     \caption{}\label{fig:cb_tau}
   \end{subfigure}\hfill
   \begin{subfigure}{0.48\textwidth}
     \centering
     \includegraphics[trim={0 0 0 0.9cm},clip,width=\linewidth]{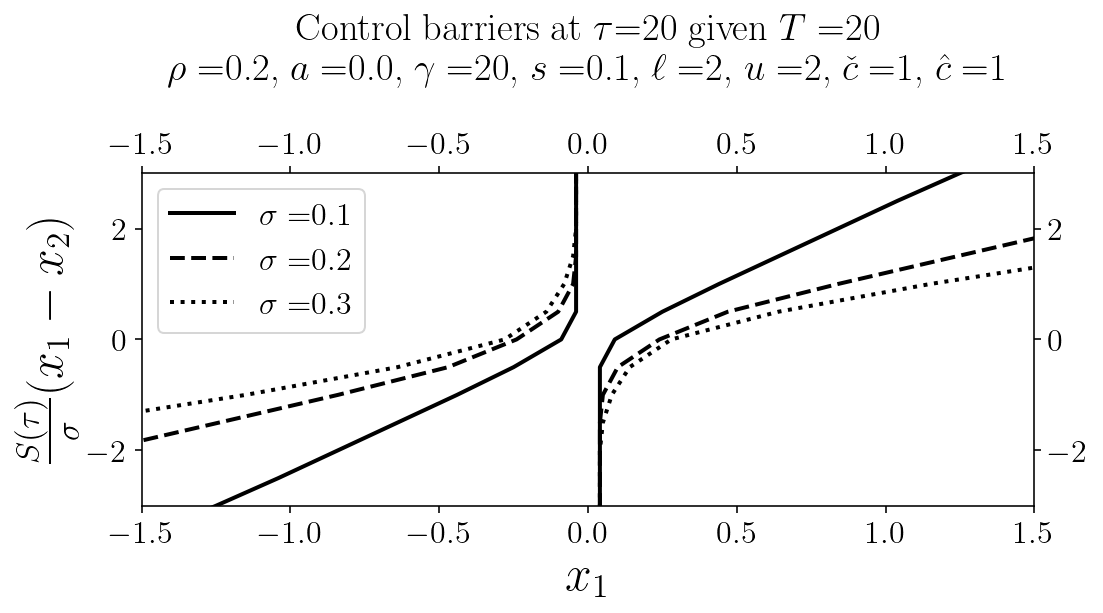}
     \caption{}\label{fig:cb_sigma}
   \end{subfigure}\hfill
   \begin{subfigure}{0.48\textwidth}
     \centering
     \includegraphics[trim={0 0 0 0.9cm},clip,width=\linewidth]{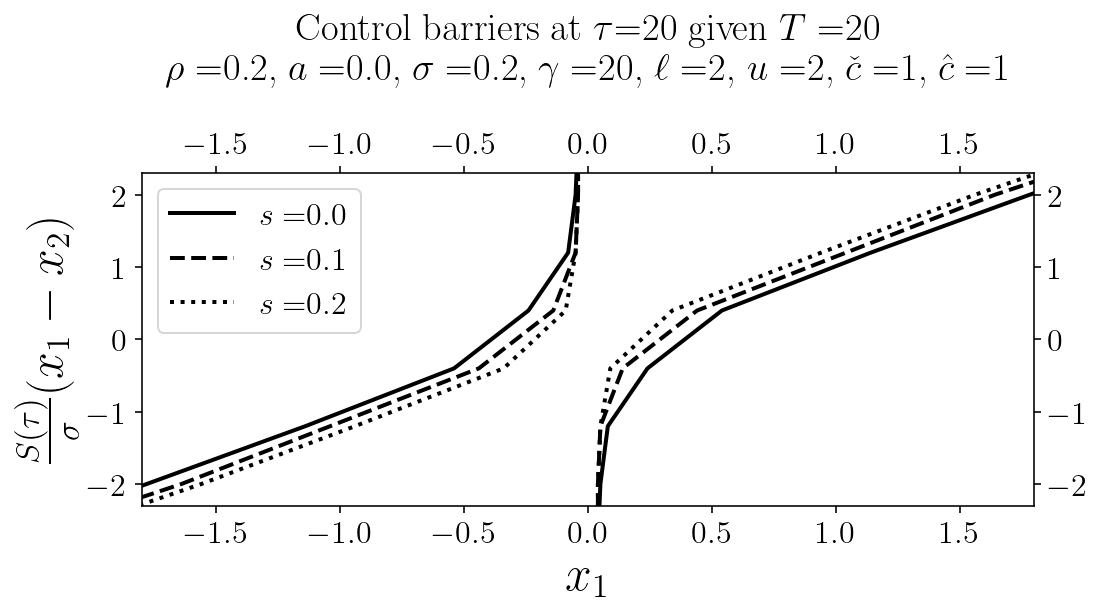}
     \caption{}\label{fig:cb_s0}
   \end{subfigure}\hfill
   \begin{subfigure}{0.48\textwidth}
     \centering
     \includegraphics[trim={0 0 0 0.9cm},clip,width=\linewidth]{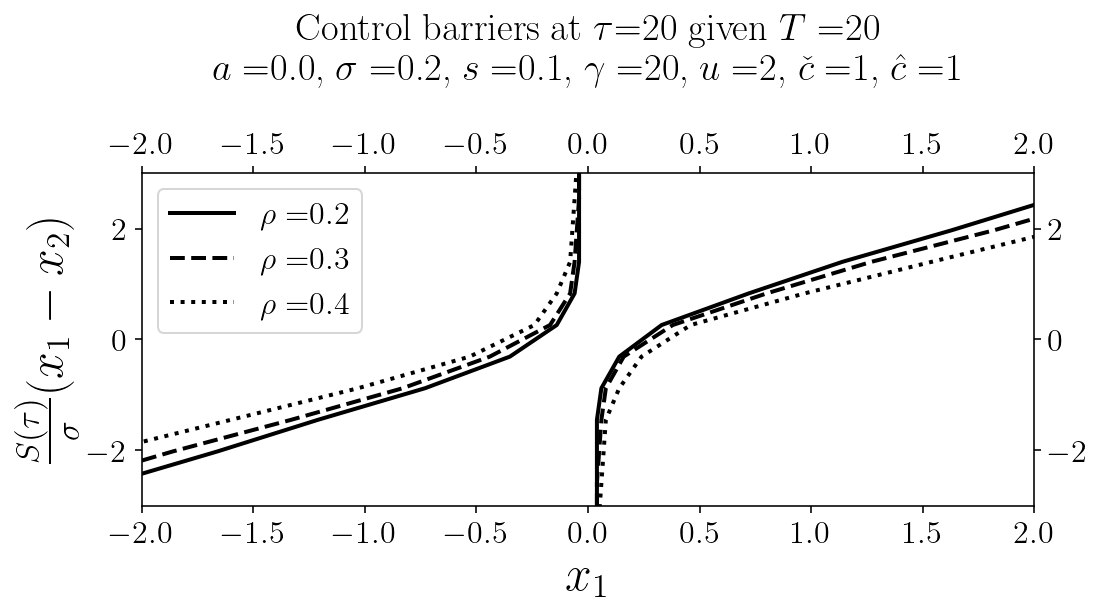}
     \caption{}\label{fig:cb_rho}
   \end{subfigure}\hfill
   \begin{subfigure}{0.48\textwidth}
     \centering
     \includegraphics[trim={0 0 0 0.9cm},clip,width=\linewidth]{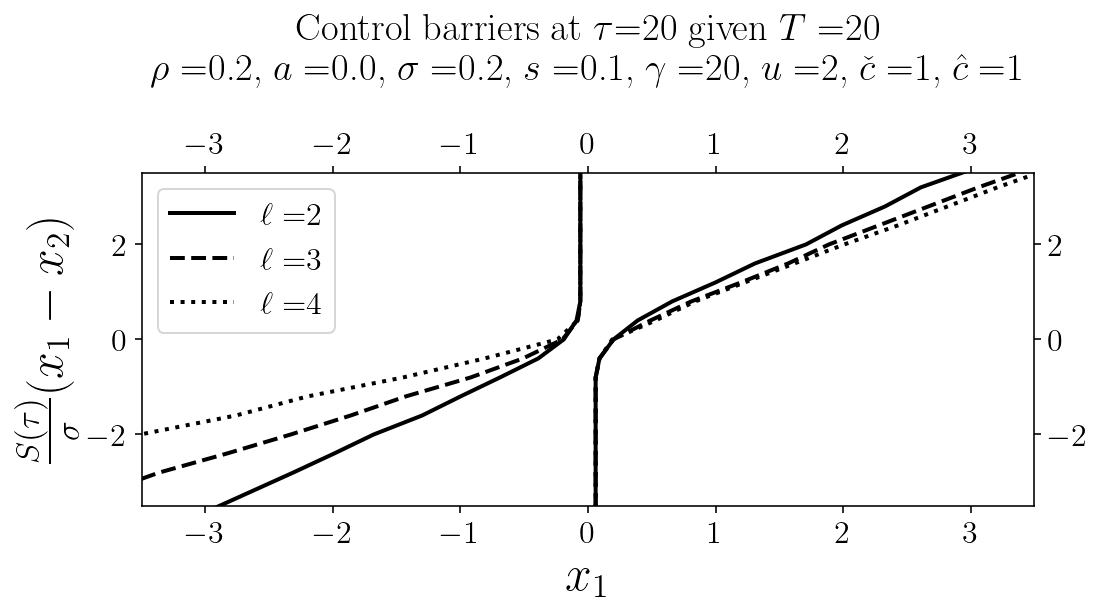}
     \caption{}\label{fig:cb_l}
   \end{subfigure}\hfill
   \begin{subfigure}{0.48\textwidth}
     \centering
     \includegraphics[trim={0 0 0 0.9cm},clip,width=\linewidth]{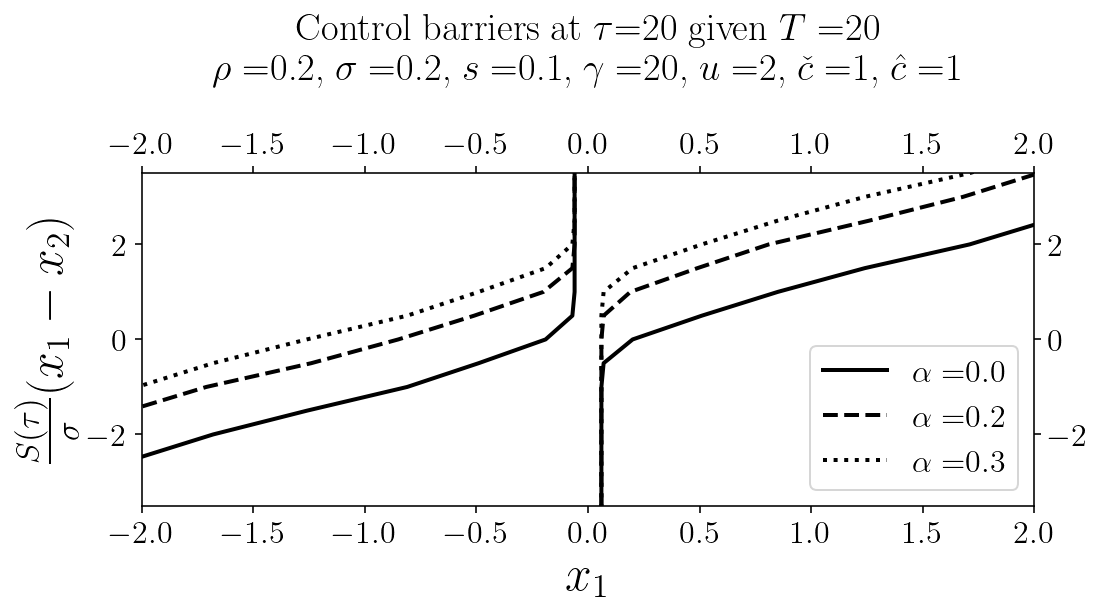}
     \caption{}\label{fig:cb_alpha}
   \end{subfigure}\hfill
   \begin{subfigure}{0.48\textwidth}
     \centering
     \includegraphics[trim={0 0 0 0.9cm},clip,width=\linewidth]{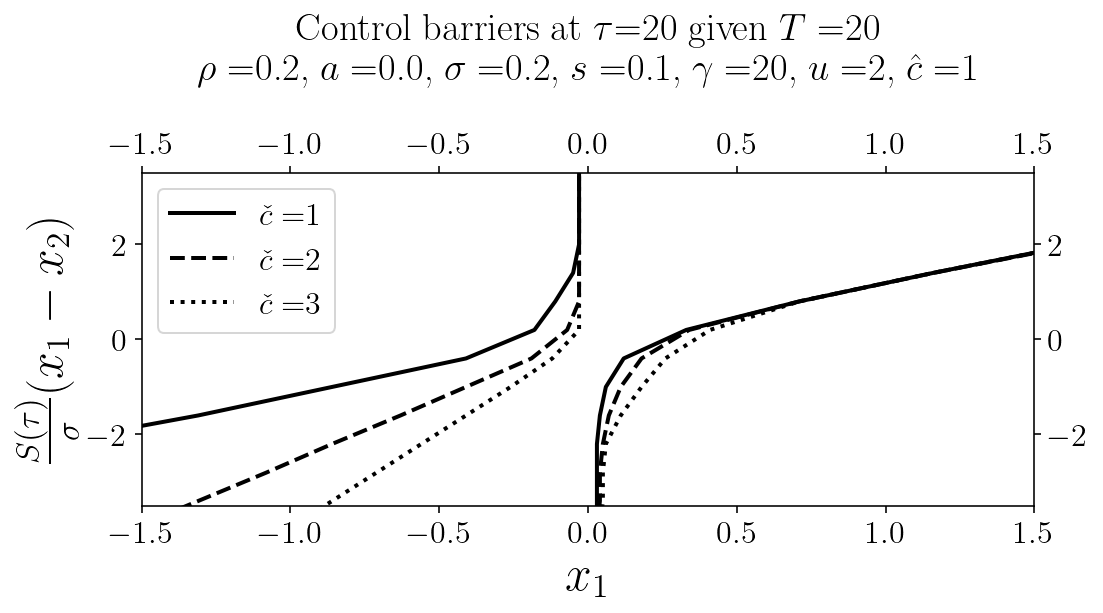}
     \caption{}\label{fig:cb_cc}
   \end{subfigure}\hfill
   \caption{Control barriers of value functions of terminal time $T=20$.}
   \label{fig:cb}
\end{figure}

The comparative statics of ambiguity attitude is demonstrated by Figures~\ref{fig:cb_gamma}, where $\rho=0.2,a=0.2,b=0.2,s=0.1,\ell=u=2, \check{c}=\hat{c}=1$ and $\tau=20$ are fixed, while $\gamma\in\{0,20,40\}$ is considered.
Evidently, ambiguity aversion reduces the size of the continuation regions at any given time to maturity. In other words, a more ambiguous DM tends to take action sooner, on average. The fact that this earlier action increases the value function suggests that the ambiguous DM is expected to act more frequently than an overconfidence one. This, in turn, implies that ambiguity amplifies trends: $X_1$ rises (or falls) more quickly when there is an upward (or downward) change in the inventory level, indicating that ambiguity incorporates a worst-case posterior update to estimate the model $\theta$ from the reference measure\footnote{This trends also align with the findings from \citet{archankul_thijssen_ferrari_hellmann_2025} in the case of maxmin utility, verifying the decision making of singular control under ambiguity}. 

Next, we investigate how the observation time shapes the control poliy. 


\subsubsection{Comparative statics of observation time}
Figure~\ref{fig:cb_tau} illustrates the shapes of the control barriers for observation times $\tau \in \{6,10,20\}$, with parameters $\rho = 0.2$, $a = 0.2$, $b = 0.2$, $s = 0.1$, $\gamma=20$, $\ell = u = 2$, and $\check{c} = \hat{c} = 1$. As observation time increases, the target region shrinks, while the learning-dominant region expands. This indicates that the DM becomes increasingly confident about the ambiguity, and thus about the target region. Accordingly, once the inventory level enters the target region, the DM should act with maximum effort to minimise the running cost. This is justified by the convergence of $X_1$ to $X_2$ over longer observation times, as suggested by~\eqref{eq: X1-X2}. Conversely, when the inventory lies within the learning-dominant region, the DM should defer intervention, as the likelihood of $X_1$ deviating from $X_2$ diminishes with time. In other words, since $X_1$ is expected to converge to $X_2$ over time, exercising control early would incur unnecessary costs.

Although there is currently no empirical evidence that directly characterises DM behaviour within the learning- and control-dominant regions, our framework provides a basis for determining the optimal control strategy, assuming the DM follows a singular control model under smooth ambiguity.

In what follows, we study the comparative static of the remaining parameters with fixed $\gamma=20$ and $\tau=20$, starting from the different levels of risk. 

\subsubsection{Comparative statics of risk}
Now, we consider the value function of $b\in\{0.1,0.2,0.3\}$ with $\rho = 0.2$, $a = 0.2$, $s = 0.1$, $\ell = u = 2$, and $\check{c} = \hat{c} = 1$. The control barriers corresponding to these parameters are illustrated in Figure~\ref{fig:cb_sigma}. We observe that, aside from the control-dominant region, the barriers expand as the level of risk increases. This is because greater risk amplifies the probability of extreme events, thereby increasing the likelihood of more frequent control interventions. As a result, it becomes more advantageous to delay control actions.

\subsubsection{Comparative statics of belief variance}
We now examine the value function for $s \in \{0, 0.1, 0.2\}$, using parameters $\rho = 0.2$, $a = 0.2$, $b = 0.2$, $\ell = u = 2$, and $\check{c} = \hat{c} = 1$. The corresponding control barriers are depicted in Figure~\ref{fig:cb_s0}. We note that when $s = 0$, the process $M$ is constant, implying the absence of ambiguity and learning. In this case, the control barrier reflects the classical singular control problem for the inventory process~\eqref{eq: XA}, with drift $\alpha(x) = a - bm$ and diffusion $\sigma(x) = b$, where $m \in (\mL, \mU)$. As the belief variance $s$ increases, we observe that the control barriers shrink, similarly to the effects of ambiguity. This occurs because a higher belief variance increases the likelihood that the belief mean deviates from the initial reference point, $M_0 = 0$. Such deviations lead to greater variability in the drift, raising the probability of incurring higher holding costs on both ends. Consequently, the optimal policy is to intervene earlier to mitigate these potential costs.

\subsubsection{Comparative statics of discounted rate}
The comparative statics of the discount rate are illustrated in Figure~\ref{fig:cb_rho}, where parameters are fixed at $a = 0.2$, $b = 0.2$, $s = 0.1$, $\ell = u = 2$, and $\check{c} = \hat{c} = 1$,  while the discount rate varies with $\rho \in \{0.2, 0.3, 0.4\}$. As the figure shows, the control barriers expand as the discount rate increases. This occurs because a higher discount rate reduces the present value of future inventory holding costs, making it relatively less costly to delay intervention. As a result, the DM has a greater incentive to wait, knowing that postponing control is, in relative terms, more cost-effective than acting immediately under a lower discount rate.

\subsubsection{Comparative statics of control cost}
We now analyse the shape of the control barriers under varying lower control costs, with $\ell \in \{2,3,4\}$ and parameters $\rho = 0.2$, $a = 0.2$, $b = 0.2$, $s = 0.1$, $u = 2$, and $\check{c} = \hat{c} = 1$. The case of higher upper control costs can be interpreted analogously. Intuitively, an increase in the lower control cost raises the expected running cost. As a result, it becomes optimal to delay intervention at the lower barrier. Moreover, since exercising control on the lower side is now relatively more costly than on the upper side, the DM should also defer action on the upper barrier to reduce the likelihood of the inventory drifting into the lower region. This interpretation is confirmed by the patterns observed in Figure~\ref{fig:cb_l}.

\subsubsection{Comparative statics of drift}
We now investigate the value function under varying drift coefficients, with $a \in \{0, 0.2, 0.3\}$, while fixing the parameters at $\rho = 0.2$, $b = 0.2$, $s = 0.1$, $\ell = u = 2$, and $\check{c} = \hat{c} = 1$. The corresponding control barriers are shown in Figure~\ref{fig:cb_alpha}. A higher drift increases the likelihood that the inventory process incurs the upper holding cost. To mitigate this, the DM should intervene earlier on the upper barrier, while delaying actions at the lower barrier. This allows the inventory more time to potentially return to the upper region, avoiding unnecessary intervention on the upper side.

Furthermore, an increase in $a$ causes the target region to shift upward, as previously discussed in Section~\ref{sec: general}, due to the condition $a + S(X_1 - X_2) = 0$. Taken together, these effects lead to an upward shift of the entire control barrier structure, as illustrated in Figure~\ref{fig:cb_alpha}.

\subsubsection{Comparative statics of holding cost}
Figure~\ref{fig:cb_cc} displays the control barrier configurations for varying lower holding costs, with $\check{c} \in \{1,2,3\}$ and parameters $\rho = 0.2$, $a = 0.2$, $b = 0.2$, $s = 0.1$, $\ell = u = 2$, and $\hat{c} = 1$. As the lower holding cost increases, it becomes more expensive to maintain a negative inventory level. Consequently, the DM should exert the lower control earlier to avoid incurring higher holding costs. Simultaneously, the DM should delay action on the upper control barrier, which is associated with a lower cost. Once the inventory reaches this upper region, not only is the holding cost relatively cheaper, but the probability of the inventory drifting into the costly negative region also decreases, thereby reducing the expected holding cost overall.

It is important to note that this control strategy applies when the inventory exhibits a positive trend, i.e., when the DM can afford to wait and learn. In contrast, if the inventory has a negative trend, it enters a control-dominant region where immediate intervention is required to prevent excessively high holding costs. Furthermore, in cases where the inventory level is excessively positive while the trend is negative, the control barriers remain relatively unchanged. This is because, in expectation, the inventory is likely to revert toward the lower region, which carries a higher cost. Hence, early intervention in such a situation, similar to the wait-and-learn region, is unnecessary.

\section{Conclusion}\label{sec:conclusion}

In this paper, we explore the application of singular control in inventory management under smooth ambiguity preference, where the model involves an unobservable parameter assumed to follow a Gaussian distribution. We establish a connection between the value function under smooth ambiguity and a forward-backward stochastic differential equation with quadratic growth. A verification theorem is provided in terms of the Hamilton-Jacobi-Bellman (HJB) equation to derive the optimal control policy. Additionally, we utilise the coordinate transform method to simplify the diffusion term associated with the unobservable parameter for practical use. We then apply the Markov chains approximation to numerically solve the HJB equation and conduct comparative statics to analyse the effects of smooth ambiguity on the optimal control policy, assuming the inventory level follows an arithmetic Brownian motion.

Our numerical study contributes a new perspective on singular control with both learning and ambiguity. We show that there are three critical control regions: the target, learning-dominant, and control-dominant regions, respectively. The first suggests a saddle point for which the decision-maker must take the maximum intervention to obtain the lowest running cost. This region also appears in the standard singular control. The second one represents the region where it is optimal to wait and learn the model evolution, while the last one imposes the region where learning should be inactive.
In the presence of ambiguity, these continuation regions become smaller, suggesting that the decision-maker is expected to act sooner and more often. However, this contraction occurs only on the learning-dominant side, as ambiguity arises through learning in the context of the smooth ambiguity framework. Consequently, in the control-dominant region, the control policy under smooth ambiguity remains indifferent, as learning in this region is suboptimal.

It is important to highlight that while the treatment of smooth ambiguity in continuous time is based on the framework of singular control, our theory can be extended to a broader range of problems in stochastic control, provided the HJB equations are applicable. For instance, impulse control represents a direct extension of singular control, where actions cause not only continuous reflection but also jumps of a certain magnitude. This issue becomes particularly relevant when there are fixed and proportional costs associated with inventory intervention. Another area that could consider ambiguity, aside from barrier control, is, for example, the McDonald-Siegel-type model, which presents a stylised optimal stopping problem in investment timing decisions. There is extensive literature on these topics, and we refer to \citet{pham2009continuous,Har13} for a comprehensive overview.


\newpage
\addcontentsline{toc}{section}{References}
\bibliography{sincon} 

\end{document}